\newcommand{\db}[1]{(\!({#1})\!)}
\numberwithin{equation}{section}
\newcommand{\N}{{\mathbb N}}
\newcommand{\Z}{{\mathbb Z}}
\newcommand{\Q}{{\mathbb Q}}
\newcommand{\C}{{\mathbb C}}
\DeclareMathOperator{\Aut}{Aut} 
\DeclareMathOperator{\Hom}{Hom} 
\DeclareMathOperator{\End}{End}
\DeclareMathOperator{\Res}{Res} 
\DeclareMathOperator{\id}{id} 
\DeclareMathOperator{\Span}{Span}
\DeclareMathOperator{\wt}{wt}
\newcommand{\pmul}[4]{\Phi^{#1}_{#2,#3,#4}}
\newcommand{\lw}{\Delta}
\newcommand{\wa}{b}
\newcommand{\wA}{A}
\newcommand{\wB}{B}
\newcommand{\walpha}{\alpha}
\newcommand{\zhumod}[3]{A^{#1}_{{#2},{#3}}}
\newcommand{\zhualg}[2]{A^{#1}_{{#2}}}
\newcommand{\zhuOzero}[3]{O^{{#1},0}_{{#2},{#3}}}
\newcommand{\zhuOi}[3]{O^{{#1},1}_{{#2},{#3}}}
\newcommand{\zhuOii}[3]{O^{#1,2}_{#2,#3}}
\newcommand{\zhuOiii}[3]{O^{#1,3}_{#2,#3}}
\newcommand{\zhuO}[3]{O^{#1}_{#2,#3}}
\newcommand{\zhuOit}[4]{O^{(#1;#2),1}_{#3,#4}}
\newcommand{\fo}[4]{{\Psi}^{(#1;#2)}_{#3,#4}}
\newcommand{\unitmu}[4]{E^{(#1;#2)}_{#3,#4}}
\newcommand{\hunitmu}[4]{\hat{E}^{(#1;#2)}_{#3,#4}}
\newcommand{\rbk}[2]{r(#1,#2)}
\newcommand{\glb}[2]{F(#1 | #2)}
\newcommand{\ty}[4]{Y^{(#1)}_{#2}(#3 | #4)}
\newcommand{\cty}[4]{Y^{(#1)}_{#2}(#3 ; #4)}
\newcommand{\none}{{m}}
\newcommand{\ntwo}{p}
\newcommand{\nthr}{n}
\newcommand{\ws}{s}
\renewcommand{\wr}{r}
\newcommand{\wz}{z}
\newcommand{\wx}{x}
\newcommand{\wy}{y}
\newtheorem{lemma}{Lemma}[section]
\newtheorem{theorem}[lemma]{Theorem}
\newtheorem{proposition}[lemma]{Proposition}
\newtheorem{corollary}[lemma]{Corollary}
\theoremstyle{definition}
\newtheorem{definition}[lemma]{Definition}
\newtheorem{example}[lemma]{Example}
\newtheorem{remark}[lemma]{Remark}
\title{A generalization of twisted modules over vertex algebras}
\author{Kenichiro Tanabe\footnote{This research was partially supported by JSPS Grant-in-Aid for Scientific Research No. 24540003.}
\\\\
Department of Mathematics\\
Hokkaido University\\
Kita 10, Nishi 8, Kita-Ku, Sapporo, Hokkaido, 060-0810\\
Japan\\\\
ktanabe@math.sci.hokudai.ac.jp}
\date{}
\begin{document}
\maketitle

\abstract{
For an arbitrary positive integer $T$ we introduce a notion of a $(V,T)$-module over a vertex algebra $V$,
which is a generalization of a twisted $V$-module.
Under some conditions on  $V$,
we construct an associative algebra $\zhualg{T}{m}(V)$ for $m\in(1/T)\N$ and an
$\zhualg{T}{m}(V)$-$\zhualg{T}{n}(V)$-bimodule $\zhumod{T}{n}{m}(V)$ for $n,m\in(1/T)\N$
and we establish a 
one-to-one correspondence between the set of isomorphism classes of simple left
$\zhualg{T}{0}(V)$-modules and 
that of simple $(1/T)\N$-graded $(V,T)$-modules.
}

\bigskip
\noindent{\it 2010 Mathematics Subject Classification.} 
Primary 17B69; Secondary 17B68.

\noindent{\it Key Words and Phrases.} vertex algebra, twisted module.

\section{Introduction}
Twisted modules (or twisted sectors) were introduced in the study of the so-called orbifold models of conformal field theory 
(cf. \cite{DVVV,DHVW}). Let $V$ be a vertex operator algebra and $G$ a finite automorphism group of $V$.
In terms of vertex operator algebras,
the study of the orbifold models corresponds to the study of the subalgebra $V^{G}$ of $G$-invariants in $V$.
One of the main problems about $V^G$ is 
to describe the $V^G$-modules in terms of $V$ and $G$.
Twisted modules have been studied systematically as representations of $V$ related to this problem
(cf. \cite{DLM1,FLM,Le,Li}).
For $g\in G$, every $g$-twisted $V$-module becomes a $V^G$-module.
Moreover, it is conjectured that under some conditions on $V$,
every simple $V^G$-module is contained in some simple $g$-twisted 
$V$-module for some $g\in G$ (cf. \cite{DVVV}).
However, the following easy observation tells us an inconvenience of twisted $V$-modules
from the representation theoretic viewpoint:
let $g,h$ be two different elements of $G$,
$M$ a $g$-twisted $V$-module and $N$ an $h$-twisted $V$-module.
Although the direct sum $M\oplus N$ is a $V^G$-module, this is not a (twisted) $V$-module in general.
This is one of obstructions to develop the representation theory of $V^G$.

In this paper, for a vertex algebra $V$ and a positive integer $T$
we first introduce a notion of a {\it $(V,T)$-module} (cf. Definition \ref{definition:def-vt}),
which is a generalization of a twisted $V$-module, in order to resolve the inconvenience just mentioned above.
Roughly speaking, a $(V,T)$-module is a \lq\lq twisted $V$-module\rq\rq\ without automorphisms.
We next generalize some results by Zhu\cite{Z} to $(V,T)$-modules.
In \cite{Z}, if $V$ is a vertex operator algebra, then Zhu constructed an associative algebra 
$A(V)$ and established a one-to-one correspondence between the set of isomorphism classes of
the simple $A(V)$-modules and that of the simple $V$-modules with some conditions.
Some generalizations of $A(V)$ have been obtained in
\cite{DJ1,DJ2,DLM1,DLM2,DLM3} and they have played an important role in the 
representation theory of $V$.
We shall show the following results
for a vertex algebra $V$ with a grading $V=\oplus_{i=\lw}^{\infty}V_i$ such that $\lw\in\Z_{\leq 0}$,
${\bf 1}\in V_0$ and for all homogeneous element $a\in V$,
$a_{i}V_j\subset V_{\wt a-1-i+j}$,
where $V_i=0$ for $i<\Delta$. 
For every positive integer $T$ and $n,m\in (1/T)\N$,
we shall construct an associative algebra 
$\zhualg{T}{m}(V)$ and an $\zhualg{T}{n}(V)$-$\zhualg{T}{m}(V)$-bimodule $\zhumod{T}{n}{m}(V)$ 
in Theorem \ref{theorem:zhu-algebra}.
If $T=1$, then $\zhumod{T}{n}{m}(V)$ is the same as $A_{n,m}(V)$ in \cite{DJ2} and
$\zhualg{T}{n}(V)$ is the same as $A_{n}(V)$ in \cite{DLM2}.
In particular, $\zhualg{1}{0}(V)$ is the same as $A(V)$ in \cite{Z}.
For an automorphism $g$ of $V$ of finite order,
$A_{g,n,m}(V)$ in \cite{DLM1,DLM3} is a quotient of $\zhumod{|g|}{n}{m}(V)$.
For $m\in(1/T)\N$ and a left $\zhualg{T}{m}(V)$-module $U$,
we shall show in Theorem \ref{theorem:zhu-correspondence}
that the $(1/T)\N$-graded vector space 
$M(U)=\oplus_{n\in (1/T)\N}\zhumod{T}{n}{m}(V)\otimes_{\zhualg{T}{m}(V)}U$
has a structure of $(V,T)$-modules with a universal property.
In Corollary \ref{corollary:zhu-correspondence}, we establish a 
one-to-one correspondence between the set of isomorphism classes of simple
$\zhualg{T}{0}(V)$-modules and 
that of simple $(1/T)\N$-graded $(V,T)$-modules.

The organization of the paper is as follows. In Section \ref{section:vt} we introduce a notion of 
a $(V,T)$-module.
In Section \ref{section:subspace} we introduce
a subspace 
$\zhuOi{T}{n}{m}(\alpha,\beta;z)$ of $\C[\wz,\wz^{-1}]$ for $n,m\in(1/T)\N$ and $\alpha,\beta\in\Z$
and study its properties.
In Section \ref{section:zhu} we construct an associative algebra $\zhualg{T}{m}(V)$ 
and an $\zhualg{T}{n}(V)$-$\zhualg{T}{m}(V)$-bimodule $\zhualg{T}{m}(V)$ for $n,m\in (1/T)\N$ by using the results
in Section \ref{section:zhu}.
In Section \ref{section:graded-vt} we introduce a notion of a $(1/T)\N$-graded $(V,T)$-module
and study a relation between the left $\zhualg{T}{m}(V)$-modules 
and the $(1/T)\N$-graded $(V,T)$-modules.
Section 6 consists of two subsections. In Subsection \ref{subsection:determinant} we compute the determinant of a matrix used 
in Section \ref{section:subspace}. In Subsection \ref{subsection:improvement} we improve some results in \cite{MT}.
 In Section 7 we list some notations.
\section{$(V,T)$-modules
\label{section:vt}
}
We assume that the reader is familiar with the basic knowledge on
vertex algebras as presented in \cite{B,DLM1,LL}. 

Throughout this paper, $\N$ denotes the set of all non-negative integers,
$T$ is a fixed positive integer and
$(V,Y,{\mathbf 1})$ is a vertex algebra.
Recall that $V$ is the underlying vector space, 
$Y(-,\wx)$ is the linear map from $V$ to $(\End V)[[\wx,\wx^{-1}]]$,
and ${\mathbf 1}$ is the vacuum vector.
For $i,j\in\Z$, define
\begin{align*}
\Z_{\leq i}&=\{k\in\Z\ |\ k\leq i\},\\
\Z_{\geq i}&=\{k\in\Z\ |\ k\geq i\},\\
\C[\wx,\wx^{-1}]_{\leq i}&=\Span_{\C}\{\wx^k\ |\ k\leq i\},\\
\C[\wx,\wx^{-1}]_{j,i}&=\Span_{\C}\{\wx^k\ |\ j\leq k\leq i\}.
\end{align*}
For $f(\wz)\in\C[\wz,\wz^{-1}]$ and $a,b\in V$,
$f(\wz)|_{\wz^j=a_jb}$ denotes the element of $V$ obtained from $f(\wz)$ by replacing 
$\wz^j$ by $a_{j}b$ for all $j\in\Z$.
For $i,j\in\Q$, define
\begin{align}
\label{eq:delta-function}
\delta(i\leq j)&=
\left\{
\begin{array}{ll}
1& \mbox{if $i\leq j$},\\
0& \mbox{if $i>j$}.
\end{array}\right.
\end{align}

Let $M$ be a vector space over $\C$.
Define three linear injective maps
\begin{align*}
\iota_{\wx,y} : & M[[\wx^{1/T},y^{1/T}]][\wx^{-1/T},y^{-1/T},(\wx-y)^{-1}]\rightarrow M\db{\wx^{1/T}}\db{y^{1/T}},\\
\iota_{y,\wx} : &M[[\wx^{1/T},y^{1/T}]][\wx^{-1/T},y^{-1/T},(\wx-y)^{-1}]\rightarrow M\db{y^{1/T}}\db{\wx^{1/T}},\\
\iota_{\wx,\wx-y} : & M[[\wx^{1/T},y^{1/T}]][\wx^{-1/T},y^{-1/T},(\wx-y)^{-1}]\rightarrow M\db{y^{1/T}}\db{\wx-y}
\end{align*}
by 
\begin{align*}
\iota_{\wx,y}
f&=\sum_{j,k.l}a_{j,k,l}\sum_{i=0}^{\infty}\binom{l}{i}(-1)^{i}\wx^{j+l-i}y^{k+i},\\
\iota_{y,\wx}
f&=\sum_{j,k.l}a_{j,k,l}\sum_{i=0}^{\infty}\binom{l}{i}(-1)^{l-i}y^{k+l-i}\wx^{j+i},\\
\iota_{y,\wx-y}
f&=\sum_{j,k.l}a_{j,k,l}\sum_{i=0}^{\infty}\binom{j}{i}y^{k+j-i}(\wx-y)^{l+i}
\end{align*}
for $f=\sum_{j,k.l}a_{j,k,l}
\wx^jy^k(\wx-y)^l\in M[[\wx^{1/T},y^{1/T}]][\wx^{-1/T},y^{-1/T},(\wx-y)^{-1}],
a_{j,k,l}\in M$.
We can also define the map 
\begin{align*}
\iota_{x-y,y} : 
M[[\wx^{1/T},y^{1/T}]][\wx^{-1/T},y^{-1/T},(\wx-y)^{-1}]\rightarrow M\db{(\wx-y)^{1/T}}\db{y^{1/T}}
\end{align*}
similarly.
Since $\iota_{x,y}(x-y)^i=\sum_{j=0}^{\infty}\binom{i}{j}x^{i-j}(-1)^jy^{j}$
and $\iota_{x-y,y}x^i=\sum_{j=0}^{\infty}\binom{i}{j}(x-y)^{i-j}y^{j}$,
we identify $M\db{(\wx-y)^{1/T}}\db{y^{1/T}}$ with $M\db{\wx^{1/T}}\db{y^{1/T}}$ and 
$\iota_{x-y,y}$ with $\iota_{x,y}$.

Now we introduce a generalization of a twisted $V$-module.
\begin{definition}\label{definition:def-vt}
Let $M$ be a vector space over $\C$
and $Y_{M}(-,\wx)$ a linear map from $V$ to $(\End_{\C}M)[[\wx^{1/T},\wx^{-1/T}]]$.
We call $(M,Y_M)$ a {\it $(V,T)$-module}
if 
\begin{enumerate}
\item For $a\in V$ and $w\in M$, $Y_{M}(a,\wx)w\in M\db{\wx^{1/T}}$.
\item $Y_{M}({\bf 1},\wx)=\id_{M}$.
\item For $a,b\in V$ and $w\in M$,
there is $\glb{a,b,w}{\wx,\wy}\in M[[\wx^{1/T},y^{1/T}]][\wx^{-1/T},y^{-1/T},(\wx-y)^{-1}]$
such that 
\begin{align*}
\iota_{\wx,y}\glb{a,b,w}{\wx,\wy}&=Y_{M}(a,\wx)Y_{M}(b,y)w,\\
\iota_{y,\wx}\glb{a,b,w}{\wx,\wy}&=Y_{M}(b,y)Y_{M}(a,\wx)w,\quad\mbox{and }\\
\iota_{y,\wx-y}\glb{a,b,w}{\wx,\wy}&=Y_{M}(Y(a,\wx-y)b,y)w.
\end{align*}
\end{enumerate}
\end{definition}

We note that in Definition \ref{definition:def-vt},
$\glb{a,b,w}{x,y}$ is uniquely determined by $a,b\in V$ and $w\in M$
since $\iota_{\wx,\wy}$ is an injection. 
For a $(V,T)$-module $M$, 
a subspace $N$ of $M$ is called {\it $(V,T)$-submodule} of $M$
if $(N,Y_M|_{N})$ is a $(V,T)$-module,
where $Y_M|_{N}$ is the restriction of $Y_{M}$ to $N$.
A non-zero $(V,T)$-module $M$ is called {\it simple} 
if there is no submodule of $M$ except $0$ and $M$ itself.
For a submodule $N$ of a $(V,T)$-module $M$,
the quotient space $M/N$ is clearly a $(V,T)$-module.
For a set of $(V,T)$-modules $\{M_{i}\}_{i\in I}$,
the direct sum $\oplus_{i\in I}M_i$ is  a $(V,T)$-module.

\begin{remark}\label{remark:twisted-vt}
It follows from Lemma \ref{lemma:Borcherds} below that
every $(V,1)$-module is a $V$-module and vice versa
and that 
every $g$-twisted $V$-module is a $(V,|g|)$-module for an automorphism $g$ of $V$ of finite order.

Let $T^{\prime}$ be a positive multiple of $T$.
Then every $(V,T)$-module is a $(V,T^{\prime})$-module.
Thus, for positive integers $T_1$ and $T_2$
the direct sum of a $(V,T_1)$-module and a $(V,T_2)$-module
becomes a $(V,T_3)$-module, where $T_3$ is a positive common multiple of $T_1$ and $T_2$.
Thus, $(V,T)$-modules are closed under direct sums in this sense,
while twisted $V$-modules are not as stated in the introduction. 
\end{remark}

\begin{example}\label{example:vs3}
We introduce an easy example of simple $(V,T)$-modules which is not a twisted $V$-module. 
Let $U$ be a simple vertex operator algebra.
Suppose the symmetric group $S_3$ of degree $3$ is an automorphism group of $U$.
Let $\sigma,\tau \in S_3$ such that $|\sigma|=3$ and $|\tau|=2$ and
$M=\oplus_{j\in(1/3)\N}M(j)$ a simple $\sigma$-twisted $U$-module \cite{DLM1}.
It follows from Remark \ref{remark:twisted-vt} that
$M$ is a $(U,3)$-module. Restricting $Y_{M}$ to $U^{\langle \tau \rangle}$,
$M$ becomes a $(U^{\langle \tau \rangle},3)$-module.
We shall show $M$ is a simple $(U^{\langle \tau \rangle},3)$-module.
Let $W$ be a non-zero $(U^{\langle \tau \rangle},3)$-submodule of $M$.
We denote the subspace $\oplus_{j\in i/3+\N}M(j)$ of $M$ by $M^{i}, i=0,1,2$.
Since $\tau\sigma\tau=\sigma^{-1}\neq \sigma$,  an improvement of 
\cite[Theorem 2]{MT} (see Subsection \ref{subsection:improvement})
implies that $M^0,M^1$ and $M^2$ are all inequivalent simple $U^{S_3}$-modules.
Thus, $W$ contains at least one of $M^0,M^1$ and $M^2$ since $U^{S_3}\subset U^{\langle\tau\rangle}$.  
We denote the eigenspace $\{u\in U\ |\ \sigma u=e^{-2\pi\sqrt{-1}r/3}u\}$
of $\sigma$ by $U^{(\sigma,r)}$,$r=0,1,2$.
It follows by \cite[Proposition 3.3]{DM} and \cite[Theorem 1]{HMT} that $U^{\langle \tau \rangle}\not\subset
U^{\langle \sigma \rangle}$ and hence there exists $a=a^0+a^1+a^2\in U^{\langle \tau \rangle}$,
$a^r\in U^{(\sigma,r)}$ such that at least one of $a^1,a^2$ is not zero.
Since 
\begin{align*}
Y_{M}(a,x)&=\sum_{i\in \Z}a^0_ix^{-i-1}+\sum_{i\in 1/3+\Z}a^1_ix^{-i-1}+\sum_{i\in 2/3+\Z}a^2_ix^{-i-1}
\end{align*}
and $M$ is a simple $\sigma$-twisted $U$-module,
$W$ contains at least two of $M^{0},M^{1}$ and $M^{2}$.
Repeating the same argument,
we obtain that 
$M$ is a simple $(U^{\langle \tau \rangle},3)$-module.

Since at least one of $a^1,a^2$ above is not zero, $M$ is not a $U^{\langle \tau \rangle}$-module.
Suppose $M$ is a $g$-twisted $U^{\langle \tau \rangle}$-module for some $g\in \Aut U^{\langle \tau \rangle}$
of order $3$.
Then, the eigenspace
$(U^{\langle \tau \rangle})^{(g,r)}=\{v\in U^{\langle \tau \rangle}\ |\ gv=e^{-2\pi\sqrt{-1}r/3}v\}$
of $g$
is a subspace of $U^{(\sigma,r)}$ for each $r=0,1,2$  since
$Y_{M}(b,x)=\sum_{j\in r/3+\Z}b_jx^{-j-1}$ for $b\in (U^{\langle \tau \rangle})^{(g,r)}$.
Therefore, $(U^{\langle \tau \rangle})^{(g,1)}=(U^{\langle \tau \rangle})^{(g,2)}=0$
since there is no representation $\rho$ of $S^3$ such that $\rho(\sigma)=e^{-2\pi\sqrt{-1}r/3}$
and $\rho(\tau)=1$ for $r=1,2$.
This contradicts to that the order of $g$ is equal to $3$.
We conclude that $M$ is not a twisted $U^{\langle \tau \rangle}$-module.
\end{example}

Let $M$ be a vector space.
For $s=0,\ldots,T-1$ and $X(\wx,y)=\sum_{i,j\in(1/T)\Z}X_{ij}x^iy^j
\in M[[\wx^{1/T},\wx^{-1/T},y^{1/T},y^{-1/T}]]$,
$X_{ij}\in M$, we define 
\begin{align}
\label{eq:xxyrx}
X(x,y)^{s,x}&=\sum_{\begin{subarray}{c}i\in s/T+\Z\\j\in(1/T)\Z\end{subarray}}
X_{ij}\wx^iy^j\quad\mbox{ and }\nonumber\\
X(x,y)^{s,y}&=\sum_{\begin{subarray}{c}i\in (1/T)\Z\\j\in s/T+\Z\end{subarray}}
X_{ij}x^iy^j
\end{align}
in $M[[x^{1/T},\wx^{-1/T},y^{1/T},y^{-1/T}]]$.
In the same way,
for $s=0,\ldots,T-1$ and $X(x,y)=\sum_{i,j\in(1/T)\Z}\sum_{k\in\Z}
X_{ijk}x^iy^j(x-y)^{k}\in M[[x^{1/T},y^{1/T}]][x^{-1/T},y^{-1/T},(x-y)^{-1}]$,
we define 
\begin{align}
\label{eq:xxyrx-2}
X(\wx,y)^{s,\wx}&=\sum_{\begin{subarray}{c}i\in s/T+\Z\\j\in(1/T)\Z\end{subarray}}
\sum_{k\in\Z}
X_{ijk}\wx^iy^j(x-y)^{k}\quad\mbox{ and }\nonumber\\
X(\wx,y)^{s,y}&=\sum_{\begin{subarray}{c}i\in (1/T)\Z\\j\in s/T+\Z\end{subarray}}
\sum_{k\in\Z}
X_{ijk}\wx^iy^j(x-y)^{k}
\end{align}
in $M[[x^{1/T},y^{1/T}]][x^{-1/T},y^{-1/T},(x-y)^{-1}]$.
Clearly
\begin{align*}
\sum_{s=0}^{T-1}X(\wx,y)^{s,x}&=\sum_{s=0}^{T-1}X(\wx,y)^{s,y}=X(\wx,y).
\end{align*}
For $0\leq s\leq T-1$, $j\in s/T+\Z, k\in(1/T)\Z$ and $l\in\Z$, 
the following fact is well known and straightforward:
\begin{align}
\label{eq:uniform}
&\wx_0^{-1}\delta(\dfrac{\wx_1-\wx_2}{\wx_0})\iota_{\wx_1,\wx_2}\big((\wx_1^{j}\wx_2^{k}\wx_0^{l})|_{\wx_0=\wx_1-\wx_2}\big)\nonumber\\
&\quad-\wx_0^{-1}\delta(\dfrac{-\wx_2+\wx_1}{\wx_0})\iota_{\wx_2,\wx_1}\big((\wx_1^{j}\wx_2^{k}\wx_0^{l})|_{\wx_0=\wx_1-\wx_2}\big)\nonumber\\
&=\wx_1^{-1}(\dfrac{\wx_2+\wx_0}{\wx_1})^{-s/T}\delta(
\dfrac{\wx_2+\wx_0}{\wx_1})
\iota_{\wx_2,\wx_0}\big((\wx_1^{j}\wx_2^{k}\wx_0^{l})|_{\wx_1=\wx_2+\wx_0}\big).
\end{align}

The argument in the proof of the following lemma is well known (cf. \cite[Sections 3.2--3.4]{LL}).
\begin{lemma}\label{lemma:Borcherds}
Let $A(\wx_1,\wx_2)\in M\db{\wx_1^{1/T}}\db{\wx_2^{1/T}},B(\wx_2,\wx_1)\in M\db{\wx_2^{1/T}}\db{\wx_1^{1/T}}$,and
$C(\wx_2,\wx_0)\in M\db{\wx_2^{1/T}}\db{\wx_0}$.
Then, the three following conditions are equivalent.
\begin{enumerate}
\item There is $F\in M[[\wx_1^{1/T},\wx_2^{1/T}]][\wx_1^{-1/T},\wx_2^{-1/T},(\wx_1-\wx_2)^{-1}]$
such that
\begin{align*}
\iota_{\wx_1,\wx_2}F&=A(\wx_1,\wx_2),&
\iota_{\wx_2,\wx_1}F&=B(\wx_2,\wx_1),\mbox{ and }\\
\iota_{\wx_2,\wx_1-\wx_2}F&=C(\wx_2,\wx_1-\wx_2).
\end{align*}
\item
There are $C^{[s]}(\wx_2,\wx_0)\in M\db{\wx_2^{1/T}}\db{\wx_0}, s=0,\ldots,T-1$ such that
$\sum_{s=0}^{T-1}C^{[s]}(\wx_2,\wx_0)=C(\wx_2,\wx_0)$ and 
\begin{align}
\label{eq:s-Borcherds}
&\wx_0^{-1}\delta(\dfrac{\wx_1-\wx_2}{\wx_0})A(\wx_1,\wx_2)^{s,\wx_1}
-\wx_0^{-1}\delta(\dfrac{-\wx_2+\wx_1}{\wx_0})B(\wx_2,\wx_1)^{s,\wx_1}\nonumber\\
&=\wx_1^{-1}(\dfrac{\wx_2+\wx_0}{\wx_1})^{-s/T}\delta(\dfrac{\wx_2+\wx_0}{\wx_1})
C^{[s]}(\wx_2,\wx_0).
\end{align}
\item
There are positive integers $l,q$ and 
$C^{[s]}(\wx_2,\wx_0)\in M\db{\wx_2^{1/T}}\db{\wx_0}, s=0,\ldots,T-1$ such that
$\sum_{s=0}^{T-1}C^{[s]}(\wx_2,\wx_0)=C(\wx_2,\wx_0)$,
\begin{align}
\label{eq:Borcherds-commutative}
(\wx_1-\wx_2)^{l}A(\wx_1,\wx_2)&=(\wx_1-\wx_2)^{l}B(\wx_2,\wx_1)
\end{align}
in $M[[x_1^{1/T},x_2^{1/T},x_1^{-1/T},x_2^{-1/T}]]$
and
\begin{align}
\label{eq:Borcherds-associative}
&\iota_{\wx_0,\wx_2}(\wx_0+\wx_2)^{-s/T+q}(A(\wx_1,\wx_2)^{s,\wx_1})|_{\wx_1=\wx_0+\wx_2}\nonumber\\
&=\iota_{\wx_2,\wx_0}(\wx_2+\wx_0)^{-s/T+q}C^{[s]}(\wx_2,\wx_0)
\end{align}
in $M[[x_0,x_2^{1/T},x_0^{-1},x_2^{-1/T}]]$.
\end{enumerate}
In this case, $F$ and $C^{[s]}(\wx_2,\wx_0),\ s=0,\ldots,T-1$ are uniquely determined by 
$A(\wx_1,\wx_2),B(\wx_2,\wx_1)$ and $C(\wx_2,\wx_0)$.
\end{lemma}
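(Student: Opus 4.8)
The plan is to establish the cycle of implications $(1)\Rightarrow(2)\Rightarrow(3)\Rightarrow(1)$, using the decomposition into $s$-components as the organizing principle throughout. The key technical observation, recorded in equation~\eqref{eq:uniform}, is that the $T$-fold delta-function identity respects the grading $j\in s/T+\Z$; this lets me pass freely between the global series $A,B,C$ and their graded pieces $A^{s,\wx_1}$, $B^{s,\wx_1}$, $C^{[s]}$. Since all three conditions assert the existence of the same $F$ and the same decomposition $C=\sum_{s}C^{[s]}$, the uniqueness claim will follow from the injectivity of the maps $\iota_{\wx_1,\wx_2}$, $\iota_{\wx_2,\wx_1}$, $\iota_{\wx_2,\wx_1-\wx_2}$ already noted after Definition~\ref{definition:def-vt}, so I would dispatch uniqueness as a short final remark.

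For $(1)\Rightarrow(2)$: given $F$, write $F=\sum_{s=0}^{T-1}F^{s,\wx_1}$ using the $x_1$-grading and set $C^{[s]}(\wx_2,\wx_0)=\iota_{\wx_2,\wx_1-\wx_2}(F^{s,\wx_1})$. Because $\iota_{\wx_1,\wx_2}$ and $\iota_{\wx_2,\wx_1}$ both preserve the residue of $x_1$ modulo $\Z$, applying them to $F^{s,\wx_1}$ yields exactly $A(\wx_1,\wx_2)^{s,\wx_1}$ and $B(\wx_2,\wx_1)^{s,\wx_1}$. The identity~\eqref{eq:s-Borcherds} is then obtained by expanding $F^{s,\wx_1}$ as a (possibly infinite) combination of monomials $\wx_1^{j}\wx_2^{k}(\wx_1-\wx_2)^{l}$ with $j\in s/T+\Z$ and applying~\eqref{eq:uniform} term by term; the three $\iota$-images in~\eqref{eq:uniform} match the three expressions $A^{s,\wx_1}$, $B^{s,\wx_1}$, $C^{[s]}$ by construction, and summing~\eqref{eq:s-Borcherds} over $s$ recovers the global delta-function identity. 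The main point to check carefully here is that the sums converge in the appropriate formal completion, i.e.\ that multiplication by $\wx_1^{-1}(\cdots)^{-s/T}\delta(\cdots)$ is well defined on $C^{[s]}\in M\db{\wx_2^{1/T}}\db{\wx_0}$.

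For $(2)\Rightarrow(3)$: I would extract~\eqref{eq:Borcherds-commutative} by taking $\Res_{\wx_0}\wx_0^{\,l-1}$ of the summed-over-$s$ version of~\eqref{eq:s-Borcherds}; the standard $\delta$-function manipulation converts the difference of the two delta terms into $(\wx_1-\wx_2)^{l}(A-B)$, and choosing $l$ large enough that it annihilates all negative powers of $(\wx_1-\wx_2)$ in both $A$ and $B$ forces equality. To obtain~\eqref{eq:Borcherds-associative}, I would instead apply $\Res_{\wx_1}\wx_1^{\,-s/T+q}$ after substituting $\wx_1=\wx_2+\wx_0$, using the third delta term on the right of~\eqref{eq:s-Borcherds} to pick out $C^{[s]}$; the factor $(\wx_2+\wx_0/\wx_1)^{-s/T}$ is precisely what cancels the fractional power and makes the substitution $\wx_1=\wx_0+\wx_2$ legitimate inside the integral-power ring $M[[x_0,x_2^{1/T},x_0^{-1},x_2^{-1/T}]]$. \textbf{This is the step I expect to be the main obstacle}, because the fractional exponent $-s/T$ must be handled with care: one has to verify that after multiplying by $(\wx_2+\wx_0)^{-s/T+q}$ the expression genuinely lies in the asserted ring and that the two expansion maps $\iota_{\wx_0,\wx_2}$ and $\iota_{\wx_2,\wx_0}$ agree on it, which is exactly the content of the associativity relation.

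For $(3)\Rightarrow(1)$: from~\eqref{eq:Borcherds-associative} I reconstruct each $C^{[s]}$ as $\iota_{\wx_2,\wx_0}$ applied to a rational expression in $\wx_2^{1/T}$ and $\wx_0$, and~\eqref{eq:Borcherds-commutative} guarantees that $(\wx_1-\wx_2)^{l}A(\wx_1,\wx_2)$ and $(\wx_1-\wx_2)^{l}B(\wx_2,\wx_1)$ are the two expansions $\iota_{\wx_1,\wx_2}$ and $\iota_{\wx_2,\wx_1}$ of a single element $G\in M[[\wx_1^{1/T},\wx_2^{1/T}]][\wx_1^{-1/T},\wx_2^{-1/T}]$. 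Setting $F=(\wx_1-\wx_2)^{-l}G$ then produces a candidate in the localized ring, and the three required $\iota$-identities of condition~(1) follow by dividing the commutativity and associativity relations back through by $(\wx_1-\wx_2)^{l}$ and reassembling over $s$; the consistency of the $\iota_{\wx_2,\wx_1-\wx_2}$-image with $\sum_s C^{[s]}=C$ is where~\eqref{eq:uniform} is used one last time, now read in the reverse direction. Throughout, the cleanest exposition treats the $s$-graded pieces as the primary objects and recovers the global statements only at the end by summation, mirroring the structure already visible in~\eqref{eq:s-Borcherds} and~\eqref{eq:Borcherds-associative}.
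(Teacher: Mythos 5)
Your overall route is the same as the paper's: the cycle $(1)\Rightarrow(2)\Rightarrow(3)\Rightarrow(1)$, the passage to $s$-components combined with \eqref{eq:uniform} for $(1)\Rightarrow(2)$, the reconstruction of $F$ from the two expansions of $(x_1-x_2)^{l}A$ in $(3)\Rightarrow(1)$, and uniqueness from injectivity of $\iota_{x_1,x_2}$. Those parts of your proposal are essentially the paper's proof.

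The gap is in $(2)\Rightarrow(3)$, where both residue extractions are left without their actual justification. For \eqref{eq:Borcherds-commutative}, your rule for choosing $l$ --- ``large enough that it annihilates all negative powers of $(x_1-x_2)$ in both $A$ and $B$'' --- does not parse: $A$ and $B$ are series in $x_1^{\pm 1/T},x_2^{\pm 1/T}$ and carry no powers of $x_1-x_2$, and at this stage there is as yet no rational function whose pole order at $x_1=x_2$ you could be invoking (its existence is exactly what $(3)\Rightarrow(1)$ will establish). The choice that makes the step work is dictated by $C$, not by $A$ and $B$: take $l$ with $x_0^{l}C^{[s]}(x_2,x_0)\in M\db{x_2^{1/T}}[[x_0]]$ for every $s$, which exists because each $C^{[s]}$ has only finitely many negative powers of $x_0$. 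Since the delta factor on the right of \eqref{eq:s-Borcherds} involves only nonnegative powers of $x_0$, multiplying \eqref{eq:s-Borcherds} by $x_0^{l}$ and taking $\Res_{x_0}$ then kills the right-hand side and leaves $(x_1-x_2)^{l}A(x_1,x_2)^{s,x_1}=(x_1-x_2)^{l}B(x_2,x_1)^{s,x_1}$. More seriously, your derivation of \eqref{eq:Borcherds-associative} never explains why the $B$-term of \eqref{eq:s-Borcherds} disappears: taking $\Res_{x_1}$ of $x_1^{-s/T+q}$ times \eqref{eq:s-Borcherds} a priori yields a three-term identity. The step goes through only because $q$ is chosen so that $x_1^{-s/T+q}B(x_2,x_1)^{s,x_1}\in M\db{x_2^{1/T}}[[x_1]]$ (possible since $B(x_2,x_1)$ has finitely many negative powers of $x_1$); as the delta function in the $B$-term contributes only nonnegative powers of $x_1$, the $B$-residue then vanishes, and what remains is precisely \eqref{eq:Borcherds-associative}. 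What you flag as ``the main obstacle'' --- the fractional-power bookkeeping --- is in fact automatic, because the factor $\left(\frac{x_2+x_0}{x_1}\right)^{-s/T}$ is already built into \eqref{eq:s-Borcherds} so that all exponents match; the real content of this implication is the two choices of $l$ and $q$ above, which your write-up respectively misstates and omits.
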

\begin{proof}
We show (1) implies (2).
Define $C^{[s]}(\wx_2,\wx_0)\in M\db{x_2^{1/T}}\db{x_0}$ by
$C^{[s]}(\wx_2,\wx_1-\wx_2)=\iota_{\wx_2,\wx_1-\wx_2}F^{s,\wx_1}
\in M\db{x_2^{1/T}}\db{x_1-x_2}$ for $s=0,\ldots,T-1$.
Clearly,  $\sum_{s=0}^{T-1}C^{[s]}(\wx_2,\wx_0)=C(\wx_2,\wx_0)$.
Since  
$\iota_{\wx_1,\wx_2}F^{s,\wx_1}=A(\wx_1,\wx_2)^{s,\wx_1}$ and
$\iota_{\wx_2,\wx_1}F^{s,\wx_1}=B(\wx_2,\wx_1)^{s,\wx_1}$ for $s=0,\ldots,T-1$,
\eqref{eq:s-Borcherds} follows from \eqref{eq:uniform}.

We show (2) implies (3).
Let $l$ be a positive integer such that 
$x_0^{l}C^{[s]}(x_2,x_0)\in M\db{x_2^{1/T}}[[x_0]]$ for all $s=0,\ldots,T-1$. 
Multiplying \eqref{eq:s-Borcherds} by $x_0^{l}$ and then
taking $\Res_{x_0}$, we have 
$(\wx_1-\wx_2)^{l}A(\wx_1,\wx_2)^{\ws,\wx_1}=(\wx_1-\wx_2)^{l}B(\wx_2,\wx_1)^{\ws,\wx_1}$ and hence
\eqref{eq:Borcherds-commutative}.
Let $q$ be a positive integer such that
$x_1^{-s/T+q}B(x_2,x_1)^{s,x_1}\in M\db{x_2^{1/T}}[[x_1]]$ for all $s=0,\ldots,T-1$. 
Multiplying \eqref{eq:s-Borcherds} by $x_1^{-s/T+q}$ and then
taking $\Res_{x_1}$, we have \eqref{eq:Borcherds-associative}.

We show (3) implies (1).
Since the left-hand side of \eqref{eq:Borcherds-commutative} is an element of 
$M\db{\wx_1^{1/T}}\db{\wx_2^{1/T}}$ and the right-hand side of \eqref{eq:Borcherds-commutative}
is an element of 
$M\db{\wx_2^{1/T}}\db{\wx_1^{1/T}}$,
$G=(\wx_1-\wx_2)^{l}A(\wx_1,\wx_2)(=(\wx_1-\wx_2)^{l}B(\wx_2,\wx_1))$
is an element of $M[[\wx_1^{1/T},\wx_2^{1/T}]][\wx_1^{-1/T},\wx_2^{-1/T}]$.
Define 
\begin{align*}
F&=(\wx_1-\wx_2)^{-l}G\in 
M[[\wx_1^{1/T},\wx_2^{1/T}]][\wx_1^{-1/T},\wx_2^{-1/T},(\wx_1-\wx_2)^{-1}].
\end{align*}
It is clear that $\iota_{\wx_1,\wx_2}F=A(\wx_1,\wx_2)$ and $\iota_{\wx_2,\wx_1}F=B(\wx_2,\wx_1)$.
Applying the same argument to \eqref{eq:Borcherds-associative}, we obtain
$H_{s}\in M[[\wx_1^{1/T},\wx_2^{1/T}]][\wx_1^{-1/T},\wx_2^{-1/T},(\wx_1-\wx_2)^{-1}]$,
$s=0,\ldots,T-1$ such that
\begin{align*}
\iota_{\wx_1-\wx_2,\wx_2}H_{s}&=A(\wx_1,\wx_2)^{s,x_1}=A((\wx_1-x_2)+x_2,\wx_2)^{s,x_1}\mbox{ and}\\
\iota_{\wx_2,\wx_1-\wx_2}H_{s}&=C^{[s]}(\wx_2,\wx_1-\wx_2).
\end{align*}
Since $M\db{(x_1-x_2)^{1/T}}\db{x_2^{1/T}}=M\db{x_1^{1/T}}\db{x_2^{1/T}}$ and $\iota_{\wx_1,\wx_2}$
is injective, we have $F^{s,\wx_1}=H_s$ for all $s=0,\ldots,T-1$
and therefore $\iota_{\wx_2,\wx_1-\wx_2}F=C(\wx_2,\wx_1-\wx_2)$.

We show $F$ and $C^{[s]}(\wx_2,\wx_0),\ s=0,\ldots,T-1$ are uniquely determined.
Since $\iota_{\wx_1,\wx_2}$ is injective and $\iota_{\wx_1,\wx_2}F=A(\wx_1,\wx_2)$, 
$F$ is uniquely determined.
In the above argument that (3) implies (1),
we have constructed  $F$
such that $\iota_{\wx_1,\wx_2}F=A(\wx_1,\wx_2)$ and
$\iota_{x_2,x_1-x_2}F^{s,x_1}=C^{[s]}(x_2,x_1-x_2)$.
Thus, $C^{[s]}(\wx_2,\wx_0),\ s=0,\ldots,T-1$ in (3) are uniquely determined.
A similar argument shows that  $C^{[s]}(\wx_2,\wx_0),\ s=0,\ldots,T-1$ in (2) are uniquely determined
and that $C^{[s]}(\wx_2,\wx_0)$ in (2) is the same as that in (3) for each $s$.
\end{proof}

\begin{remark}\label{remark:Borcherds-natural}
The following facts for \eqref{eq:s-Borcherds} are well known and straightforward.
\begin{enumerate}
\item
A direct computation shows that \eqref{eq:s-Borcherds} is 
equivalent to 
\begin{align}
\label{eq:general-Borcherds}
&\wx_0^{-1}\delta(\dfrac{\wx_1-\wx_2}{\wx_0})A(\wx_1,\wx_2)
-\wx_0^{-1}\delta(\dfrac{-\wx_2+\wx_1}{\wx_0})B(\wx_2,\wx_1)\nonumber\\
&=\dfrac{1}{T}\sum\limits_{j=0}^{T-1}\wx_1^{-1}\delta(e^{2\pi\sqrt{-1}j/T}
\dfrac{(\wx_2+\wx_0)^{1/T}}{\wx_1^{1/T}})\sum\limits_{s=0}^{T-1}
e^{2\pi\sqrt{-1}js/T}C^{[s]}(\wx_2,\wx_0).
\end{align}
\item
If we write 
$A(x_1,x_2)=\sum_{p,q}A_{(p,q)}x_1^{-p-1}x_2^{-q-1}$,
$B(x_2,x_1)=\sum_{p,q}B_{(p,q)}x_2^{-p-1}x_1^{-q-1}$ and
$C^{[s]}(x_2,x_0)=\sum_{p,q}C^{[s]}_{(p,q)}x_2^{-p-1}x_0^{-q-1}$, where
$A_{(p,q)},B_{(p,q)},C_{(p,q)}\in M$, then we have
\begin{align}
\label{eq:ABC-Coeff}
&\sum_{i=0}^{\infty}\binom{l}{i}(-1)^{i}\big(A_{(l+j-i,k+i)}+(-1)^{l+1}B_{(l+k-i,j+i)}\big)\nonumber\\
&=\sum_{i=0}^{\infty}\binom{j}{i}C^{[-s]}_{(j+k-i,l+i)}
\end{align}
for $0\leq s\leq T-1$, $j\in \ws/T+\Z, k\in (1/T)\Z$ and $l\in\Z$
by comparing the coefficients of both sides of \eqref{eq:s-Borcherds}.
Thus, a direct computation shows that \eqref{eq:s-Borcherds} is also 
equivalent to the condition that
\begin{align}
\label{eq:residue-Borcherds}
&\Res_{x_1}A(x_1,x_2)\iota_{x_1,x_2}\big(x_1^{j}x_2^{k}(x_1-x_2)^l\big)\nonumber\\
&\quad{}-\Res_{x_1}B(x_2,x_1)\iota_{x_2,x_1}\big(x_1^{j}x_2^{k}(x_1-x_2)^l\big)\nonumber\\
&=\Res_{x_1-x_2}C^{[-s]}(x_2,x_1-x_2)\iota_{x_2,x_1-x_2}\big(x_1^{j}x_2^{k}(x_1-x_2)^l\big)
\end{align}
in $M[[x_2^{1/T},x_2^{-1/T}]]$ for all $0\leq s\leq T-1$, $j\in \ws/T+\Z, k\in (1/T)\Z$ and $l\in\Z$.
Here, $\Res_{x}$ is defined by
\begin{align*}
\Res_{x}f(x)=f_{-1}
\end{align*}
for $f(x)=\sum_{i\in(1/T)\Z}f_ix^i\in M[[x^{1/T},x^{-1/T}]]$.
\end{enumerate}
\end{remark}

\begin{remark}\label{remark:bound}
For $q\in\Z$ we denote by $M\db{\wx_2^{1/T}}\db{\wx_0}_{\geq q}$
the set of all elements in $M\db{\wx_2^{1/T}}\db{\wx_0}$ of the form
$\sum_{\begin{subarray}{c}i\in(1/T)\Z\\j\in\Z_{\geq q}\end{subarray}}
X_{ij}\wx_2^{i}\wx_0^{j}$.
Suppose
$C(\wx_2,\wx_0)$ in Lemma \ref{lemma:Borcherds} is an element of $M\db{\wx_2^{1/T}}\db{\wx_0}_{\geq q}$.
Since $\iota_{\wx_2,\wx_1-\wx_2}\wx_1^{j}\wx_2^{k}(\wx_1-\wx_2)^{l}=
\sum_{i=0}^{\infty}\binom{j}{i}\wx_2^{k+j-i}(\wx_1-\wx_2)^{l+i}$,
we see that 
$F$ in Lemma \ref{lemma:Borcherds} (1) has the form $F=(\wx_1-\wx_2)^{q}G$, where $G\in M[[\wx_1^{1/T},\wx_2^{1/T}]][\wx_1^{-1/T},\wx_2^{-1/T}]$.
Thus, $C^{[s]}(\wx_2,\wx_1-\wx_2)=\iota_{\wx_2,\wx_1-\wx_2}F^{s,\wx_1}\in M\db{\wx_2^{1/T}}\db{\wx_1-\wx_2}_{\geq q}$
and hence $C^{[s]}(\wx_2,\wx_0)\in M\db{\wx_2^{1/T}}\db{\wx_0}_{\geq q}$
for all $s=0,\ldots,T-1$.
\end{remark}

Let $M$ be a $(V,T)$-module.
For $a\in V$ and $s=0,\ldots,T-1$, we define $Y^{s}_{M}(a,\wx)$ by
\begin{align}
\label{eq:Ys}
Y^{s}_{M}(a,\wx)&=\sum_{\begin{subarray}{c}i\in s/T+\Z\end{subarray}}
a_{i}\wx^{-i-1}.
\end{align}
Let $a,b\in V$ and $w\in M$.
We apply Lemma \ref{lemma:Borcherds} to 
$A(\wx_1,\wx_2)=Y_{M}(a,\wx_1)Y_{M}(b,\wx_2)w$, $B(\wx_2,\wx_1)=Y_{M}(b,\wx_2)Y_{M}(a,\wx_1)w$ and 
$C(\wx_2,\wx_0)=Y_{M}(Y(a,\wx_0)b,\wx_2)w$. 
In this case $F$ in Lemma \ref{lemma:Borcherds} (1)
is equal to $\glb{a,b,w}{x_1,x_2}$ in Definition \ref{definition:def-vt} (3).
We denote by $\ty{-s}{M}{a,b}{x_2,x_0}(w)$
the element $C^{[s]}(\wx_2,\wx_0)$ of $M\db{\wx_2^{1/T}}\db{\wx_0}$,
$s=0,\ldots,T-1$ in this case.
That is, 
\begin{align}
\label{eq:expnad-glb}
\ty{s}{M}{a,b}{x_2,x_1-x_2}(w)=\iota_{x_2,x_1-x_2}(\glb{a,b,w}{x_1,x_2}^{-s,x_1}),
\end{align}
where $\glb{a,b,w}{x_1,x_2}^{-s,x_1}$ is defined by \eqref{eq:xxyrx-2}.
The conditions in Lemma \ref{lemma:Borcherds} (2) become
\begin{align}
\label{eq:sum-Y}
\sum_{s=0}^{T-1}\ty{s}{M}{a,b}{\wx_2,\wx_0}(w)&=Y_{M}(Y(a,\wx_0)b,\wx_2)w
\end{align}
and
\begin{align}
\label{eq:Y-abw}
&\wx_0^{-1}\delta(\dfrac{\wx_1-\wx_2}{\wx_0})Y^{s}_{M}(a,\wx_1)Y_{M}(b,\wx_2)w
-\wx_0^{-1}\delta(\dfrac{-\wx_2+\wx_1}{\wx_0})Y_{M}(b,\wx_2)Y^{s}_{M}(a,\wx_1)w\nonumber\\
&=\wx_1^{-1}(\dfrac{\wx_2+\wx_0}{\wx_1})^{s/T}\delta(
\dfrac{\wx_2+\wx_0}{\wx_1})\ty{s}{M}{a,b}{\wx_2,\wx_0}(w).
\end{align}
The uniqueness of $\glb{a,b,w}{x_1,x_2}$ for each $a,b\in V$ and $w\in M$ implies that
for fixed $a,b\in V$ the map $\ty{s}{M}{a,b}{\wx_2,\wx_0}: M\rightarrow M\db{\wx_2^{1/T}}\db{\wx_0}$ is linear
and that the map $V\times V\ni(a,b)\mapsto \ty{s}{M}{a,b}{\wx_2,\wx_0}
\in \Hom_{\C}(M, M\db{\wx_2^{1/T}}\db{\wx_0})$ is bilinear.
We write 
\begin{align*}
\ty{s}{M}{a,b}{\wx_2,\wx_0}&=\sum_{i\in(1/T)\Z}\sum_{j\in\Z}\cty{s}{M}{a,b}{i,j}\wx_2^{-i-1}\wx_0^{-j-1},
\end{align*}
where $\cty{s}{M}{a,b}{i,j}\in \End_{\C}M$.

\begin{remark}\label{remark:twisted-vt-Y}
Let $g$ be an automorphism of $V$ of finite order,
$t$ a positive multiple of $|g|$
and $(M,Y_{M})$ a $g$-twisted $V$-module.
As stated in Remark \ref{remark:twisted-vt},
$(M,Y_{M})$ is a $(V,t)$-module by Lemma \ref{lemma:Borcherds}.
We explain what is $\ty{s}{M}{a,b}{\wx_2,\wx_0}$ for $a,b\in V$ and $s=0,\ldots,t-1$
in this case.
We denote by $V^{(g,r)}, r=0,\ldots,t-1$ 
the eigenspace $\{v\in V\ |\ gv=e^{-2\pi\sqrt{-1}r/t}v\}$ of $g$.
For $a\in V$, we denote by $a^{(g,r)}$ 
the $r$-th component of $a$ in the decomposition 
$V=\oplus_{r=0}^{t-1}V^{(g,r)}$, that is,
$a=\sum_{r=0}^{t-1}a^{(g,r)}, a^{(g,r)}\in V^{(g,r)}$.

Let $0\leq \ws\leq t-1$, $a,b\in V$ and $w\in M$.
Since
\begin{align*}
(Y_{M}(a,x_1)Y_{M}(b,x_2)w)^{-s,x_1}&=Y_{M}(a^{(g,s)},x_1)Y_{M}(b,x_2)w\quad\mbox{ and}\\
(Y_{M}(b,x_2)Y_{M}(a,x_1)w)^{-s,x_1}&=Y_{M}(b,x_2)Y_{M}(a^{(g,s)},x_1)w,
\end{align*} 
it follows by \eqref{eq:s-Borcherds} that
\begin{align}
\label{eq:twiste-equal}
\ty{s}{M}{a,b}{\wx_2,\wx_0}(w)&=Y_{M}(Y(a^{(g,s)},x_0)b,x_2)w.
\end{align}
\end{remark}

Let $a,b\in V$, $w\in M$, $j,k\in(1/T)\Z, l\in\Z$ and
$s$ the integer uniquely determined by the conditions $0\leq s\leq T-1$ and $s/T\equiv j\pmod{\Z}$.
It follows
by \eqref{eq:ABC-Coeff} or by comparing the coefficients of both sides of \eqref{eq:Y-abw}
that
\begin{align}
\label{eq:Borcherds-Coeff}
&\sum_{i=0}^{\infty}\binom{j}{i}\cty{s}{M}{a,b}{j+k-i,l+i}(w)\nonumber\\
&=
\sum_{i=0}^{\infty}\binom{l}{i}(-1)^{i}(a_{l+j-i}b_{k+i}+(-1)^{l+1}b_{l+k-i}a_{j+i})w.
\end{align}
It follows by \eqref{eq:residue-Borcherds} that
\begin{align}
\label{eq:residue-yayb}
&\Res_{x_1-x_2}\iota_{x_2,x_1-x_2}\big(x_1^{j}x_2^{k}(x_1-x_2)^l\big)\ty{s}{M}{a,b}{x_2,x_1-x_2}(w)\nonumber\\
&=\Res_{x_1}\iota_{x_1,x_2}\big(x_1^{j}x_2^{k}(x_1-x_2)^l\big)Y_{M}(a,x_1)Y_{M}(b,x_2)w \nonumber\\
&\quad{}-\Res_{x_1}\iota_{x_2,x_1}\big(x_1^{j}x_2^{k}(x_1-x_2)^l\big)Y_{M}(b,x_2)Y_{M}(a,x_1)w\nonumber\\
&=\Res_{x_1}\iota_{x_1,x_2}\big(x_1^{j}x_2^{k}(x_1-x_2)^l\big)Y^{s}_{M}(a,x_1)Y_{M}(b,x_2)w \nonumber\\
&\quad{}-\Res_{x_1}\iota_{x_2,x_1}\big(x_1^{j}x_2^{k}(x_1-x_2)^l\big)Y_{M}(b,x_2)Y^{s}_{M}(a,x_1)w. 
\end{align}

\begin{lemma}\label{lemma:express-ysab}
We use the notation above.
Let $L$ be an integer such that $a_{i}b=0$ for all $i\in \Z_{\geq L+1}$.
Then
\begin{align}
\label{eq:eq-express-ysab}
&\cty{s}{M}{a,b}{j+k,l}(w)\nonumber\\
&=\sum_{m=0}^{L-l}\binom{-j}{m}
\sum_{i=0}^{\infty}\binom{l+m}{i}(-1)^{i}(a_{l+m+j-i}b_{k-m+i}+(-1)^{l+m+1}b_{l+k-i}a_{j+i})w.
\end{align}
\end{lemma}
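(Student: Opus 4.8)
The plan is to read the right-hand side of \eqref{eq:eq-express-ysab} as an alternating binomial combination of instances of \eqref{eq:Borcherds-Coeff}, substitute \eqref{eq:Borcherds-Coeff} into it, and then collapse the resulting double sum by the Vandermonde identity, using the support hypothesis on $a$ and $b$ to govern the truncation. The one external input needed is a vanishing bound on the coefficients $\cty{s}{M}{a,b}{P,Q}(w)$ coming from Remark \ref{remark:bound}.

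First I would record that vanishing bound. Since $a_{i}b=0$ for $i\in\Z_{\geq L+1}$, the series $Y(a,\wx_0)b=\sum_{i\leq L}(a_ib)\wx_0^{-i-1}$ has only $\wx_0$-exponents $\geq -L-1$, so by part (1) of Definition \ref{definition:def-vt} the series $C(\wx_2,\wx_0)=Y_{M}(Y(a,\wx_0)b,\wx_2)w$ lies in $M\db{\wx_2^{1/T}}\db{\wx_0}_{\geq -L-1}$. Remark \ref{remark:bound} then yields $\ty{s}{M}{a,b}{\wx_2,\wx_0}(w)\in M\db{\wx_2^{1/T}}\db{\wx_0}_{\geq -L-1}$ for every $s$, which in coefficients reads
\begin{align*}
\cty{s}{M}{a,b}{P,Q}(w)&=0\qquad\text{whenever } Q\geq L+1 .
\end{align*}

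Next I would note that for each fixed $m$ the inner sum over $i$ in \eqref{eq:eq-express-ysab} is exactly the right-hand side of \eqref{eq:Borcherds-Coeff} with parameters $(j,k,l)$ replaced by $(j,k-m,l+m)$; here one uses $(l+m)+(k-m)=l+k$ to match the second summand, and $s$ is unchanged because it depends only on $j\bmod\Z$. Applying \eqref{eq:Borcherds-Coeff} with these parameters rewrites the whole right-hand side of \eqref{eq:eq-express-ysab} as
\begin{align*}
\sum_{m=0}^{L-l}\binom{-j}{m}\sum_{i=0}^{\infty}\binom{j}{i}\cty{s}{M}{a,b}{j+k-m-i,\,l+m+i}(w).
\end{align*}
Setting $t=m+i$ and collecting the coefficient of $\cty{s}{M}{a,b}{j+k-t,\,l+t}(w)$ turns this into
\begin{align*}
\sum_{t\geq 0}\Bigg(\sum_{m=0}^{\min(t,L-l)}\binom{-j}{m}\binom{j}{t-m}\Bigg)\cty{s}{M}{a,b}{j+k-t,\,l+t}(w).
\end{align*}
For $1\leq t\leq L-l$ the inner coefficient is the complete Vandermonde sum $\sum_{m=0}^{t}\binom{-j}{m}\binom{j}{t-m}=\binom{0}{t}=0$; for $t>L-l$ the operator $\cty{s}{M}{a,b}{j+k-t,\,l+t}(w)$ vanishes by the bound above, since then $l+t\geq L+1$; and for $t=0$ the coefficient is $1$. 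Hence only the $t=0$ term survives, giving $\cty{s}{M}{a,b}{j+k,l}(w)$, which is the assertion. (When $L<l$ both sides are $0$: the left by the support bound and the right because the $m$-sum is empty.)

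The essential feature to watch is that the two truncations act in tandem: the Vandermonde cancellation is available exactly in the range $1\le t\le L-l$, where the $m$-sum is complete, while in the range $t>L-l$, where the $m$-sum is incomplete and Vandermonde does not apply, the surviving operators are instead annihilated by the support bound. I expect the main obstacle to be purely bookkeeping, namely verifying that every sum in sight is finite — which holds since $b_{k'+i}w$ and $a_{j'+i}w$ vanish for $i\gg0$ by part (1) of Definition \ref{definition:def-vt} and the $m$-range is finite — so that the interchange of summation and the substitution $t=m+i$ are unconditionally valid, together with checking that the generalized Vandermonde identity $\sum_{m=0}^{t}\binom{-j}{m}\binom{j}{t-m}=\binom{0}{t}$ remains legitimate for the non-integral upper argument $j\in s/T+\Z$.
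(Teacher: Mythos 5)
Your proof is correct and is essentially the paper's own argument: the paper assembles the same shifted instances of \eqref{eq:Borcherds-Coeff} (with $j$ and $k+l$ held fixed), truncated via the bound from Remark \ref{remark:bound}, into a unipotent lower-triangular system and inverts the matrix, and that matrix inversion is precisely your Vandermonde collapse $\sum_{m}\binom{-j}{m}\binom{j}{t-m}=\delta_{t,0}$. The only difference is presentational — the paper solves the system while you verify the claimed solution — with all inputs (the vanishing bound, the reparametrized \eqref{eq:Borcherds-Coeff}, and the binomial identity, which indeed holds as a polynomial identity for non-integral $j$) coinciding.
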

\begin{proof}
It follows from Remark \ref{remark:bound} that
$\ty{s}{M}{a,b}{\wx_2,\wx_0}(w)\in M\db{x_2^{1/T}}\db{x_0}_{\geq -L-1}$.
Thus, if $l>L$, then the both-sides of \eqref{eq:eq-express-ysab} are equal to $0$.
Suppose $l\leq L$.
Define 
\begin{align*}
R(m)&=
\sum_{i=0}^{\infty}\binom{m}{i}(-1)^{i}(a_{m+j-i}b_{k-m+l+i}+(-1)^{m+1}b_{l+k-i}a_{j+i})w
\end{align*}
for $m\in \Z_{\leq L}$.
Since
\begin{align*}
&\begin{pmatrix}
1&0&\cdots&\cdots&0\\
\binom{j}{1}&1&\ddots&&\vdots\\
\binom{j}{2}&\ddots&\ddots&\ddots&\vdots\\
\vdots&\ddots&\ddots&\ddots&0\\
\binom{j}{L-l}&\cdots&\binom{j}{2}&\binom{j}{1}&1
\end{pmatrix}
\begin{pmatrix}
\cty{s}{M}{a,b}{j+k+l-L,L}(w)\\
\cty{s}{M}{a,b}{j+k+l-L+1,L-1}(w)\\
\vdots\\
\cty{s}{M}{a,b}{j+k,l}(w)
\end{pmatrix}\\
&=\begin{pmatrix}
R(L)\\
R(L-1)\\
\vdots\\
R(l)
\end{pmatrix}
\end{align*} 
by \eqref{eq:Borcherds-Coeff},
we have
\begin{align*}
&\begin{pmatrix}
\cty{s}{M}{a,b}{j+k+l-L,L}(w)\\
\cty{s}{M}{a,b}{j+k+l-L+1,L-1}(w)\\
\vdots\\
\cty{s}{M}{a,b}{j+k,l}(w)
\end{pmatrix}\\
&=\begin{pmatrix}
1&0&\cdots&\cdots&0\\
\binom{-j}{1}&1&\ddots&&\vdots\\
\binom{-j}{2}&\ddots&\ddots&\ddots&\vdots\\
\vdots&\ddots&\ddots&\ddots&0\\
\binom{-j}{L-l}&\cdots&\binom{-j}{2}&\binom{-j}{1}&1
\end{pmatrix}
\begin{pmatrix}
R(L)\\
R(L-1)\\
\vdots\\
R(l)
\end{pmatrix}.
\end{align*} 
This implies \eqref{eq:eq-express-ysab}.
\end{proof}

Let $a,b\in V$, $w\in M$, $j,k\in(1/T)\Z, l\in\Z$ and
$s$ the integer uniquely determined by the conditions $0\leq s\leq T-1$ and $s/T\equiv j\pmod{\Z}$.
It follows by Lemma \ref{lemma:Borcherds} that $\glb{a,{\bf 1},w}{x_1,x_2}=Y_{M}(a,x_1)w$ 
since 
\begin{align*}
Y_{M}(a,x_1)Y_{M}({\bf 1},x_2)w&=Y_{M}(a,x_1)w\\
&\in M[[x_1^{1/T},x_2^{1/T}]][x_1^{-1/T},x_2^{-1/T},(x_1-x_2)^{-1}].
\end{align*}
Comparing the coefficients of
\begin{align*}
&\iota_{x_2,x_1-x_2}x_1^{j}\ty{s}{M}{a,{\bf 1}}{\wx_2,\wx_1-\wx_2}(w)\\
&=\sum_{k\in(1/T)\Z}\sum_{l\in\Z}\sum_{i=0}^{\infty}\binom{j}{i}
\cty{s}{M}{a,{\bf 1}}{j+k-i,l+i}(w)x_2^{-k-1}(x_1-x_2)^{-l-1}
\end{align*}
and 
\begin{align*}
&\iota_{x_2,x_1-x_2}x_1^{j}Y_{M}^{s}(a,x_1)w\\
&=\sum_{
k\in\Z
}\sum_{l\in\Z}\binom{k}{-l-1}(-1)^{l+1}a_{j+k+l+1}w x_2^{-k-1}(x_1-x_2)^{-l-1},
\end{align*}
we have
\begin{align}
\label{eq:Borcherds-a-1}
&\sum\limits_{i=0}^{-l-1}\dbinom{j}{i}\cty{s}{M}{a,{\bf 1}}{j+k-i,l+i}(w)\nonumber\\
&=\left\{
\begin{array}{ll}
\dbinom{k}{-l-1}(-1)^{l+1}a_{j+k+l+1}w& \mbox{ if }k\in\Z,\\
0& \mbox{ if }k\not\in\Z.
\end{array}
\right.
\end{align}
Here, we used $\ty{s}{M}{a,{\bf 1}}{x_2,x_1-x_2}(w)\in M\db{x_2^{1/T}}[[x_1-x_2]]$ by Remark \ref{remark:bound}.
We can also obtain \eqref{eq:Borcherds-a-1} by taking $b={\bf 1}$ in \eqref{eq:Borcherds-Coeff}.
Taking $l=-1$ in \eqref{eq:Borcherds-a-1}, we have
\begin{align}
\label{eq:Borcherds-a-1-l}
\cty{s}{M}{a,{\bf 1}}{i,-1}(w)&=\left\{
\begin{array}{ll}
a_{i}w& \mbox{ if }i\in s/T+\Z,\\
0& \mbox{ if }i\not\in s/T+\Z.
\end{array}
\right.
\end{align}
By a similar argument, we have
$\glb{{\bf 1},a,w}{x_1,x_2}=Y_{M}(a,x_2)w$,
$\ty{s}{M}{{\bf 1},a}{x_2,x_1-x_2}(w)=\delta_{s,0}Y_{M}(a,x_2)w$ and hence
\begin{align}
\label{eq:ysonea}
\cty{s}{M}{{\bf 1},a}{k,l}(w)
&=
\delta_{s,0}\delta_{l,-1}a_{k}w.
\end{align}
 
\begin{lemma}\label{lemma:derivation}
Let $M$ be a $(V,T)$-module. Then, $Y_{M}(a_{-2}{\bf 1},x)=\frac{d}{dx}Y_{M}(a,x)$.
\end{lemma}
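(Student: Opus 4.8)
The plan is to derive the identity from the associativity (iterate) relation of a $(V,T)$-module applied to the pair $(a,\mathbf{1})$, combined with the creation property of the vacuum in $V$. Recall that it was already shown, as a consequence of Lemma~\ref{lemma:Borcherds}, that $\glb{a,\mathbf{1},w}{x_1,x_2}=Y_M(a,x_1)w$ for all $a\in V$ and $w\in M$. Feeding this into the third relation of Definition~\ref{definition:def-vt}, namely $\iota_{x_2,x_1-x_2}\glb{a,\mathbf{1},w}{x_1,x_2}=Y_M(Y(a,x_1-x_2)\mathbf{1},x_2)w$, I would obtain
\begin{align*}
Y_M(Y(a,x_1-x_2)\mathbf{1},x_2)w=\iota_{x_2,x_1-x_2}\big(Y_M(a,x_1)w\big).
\end{align*}

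Next I would expand both sides as formal series in $(x_1-x_2)$. On the right, writing $Y_M(a,x_1)w=\sum_{i\in(1/T)\Z}(a_iw)x_1^{-i-1}$ and using the definition of $\iota_{x_2,x_1-x_2}$ (which expands $x_1^{-i-1}=((x_1-x_2)+x_2)^{-i-1}$ into nonnegative powers of $x_1-x_2$), the coefficient of $(x_1-x_2)^1$ is $\sum_i(-i-1)(a_iw)x_2^{-i-2}$. On the left, the creation property of the vacuum in the vertex algebra $V$ gives $Y(a,x_1-x_2)\mathbf{1}=\sum_{m\geq 0}(a_{-m-1}\mathbf{1})(x_1-x_2)^m$, since $a_n\mathbf{1}=0$ for $n\geq 0$; hence the coefficient of $(x_1-x_2)^1$ on the left is $Y_M(a_{-2}\mathbf{1},x_2)w$. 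Matching these two coefficients yields
\begin{align*}
Y_M(a_{-2}\mathbf{1},x_2)w=\sum_{i\in(1/T)\Z}(-i-1)(a_iw)x_2^{-i-2}=\frac{d}{dx_2}\big(Y_M(a,x_2)w\big),
\end{align*}
and since $w\in M$ is arbitrary this is exactly $Y_M(a_{-2}\mathbf{1},x)=\frac{d}{dx}Y_M(a,x)$.

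The computation is essentially bookkeeping, so I do not expect a serious obstacle; the only point requiring care is the correct handling of the expansion map $\iota_{x_2,x_1-x_2}$ on a series in $M\db{x_1^{1/T}}$ whose $x_1$-exponents are merely bounded above. One should check that $\iota_{x_2,x_1-x_2}\big(Y_M(a,x_1)w\big)$ indeed lies in $M\db{x_2^{1/T}}\db{x_1-x_2}$, so that extracting the coefficient of $(x_1-x_2)^1$ is legitimate; this holds because each $x_1^{-i-1}$ expands into nonnegative powers of $x_1-x_2$ and because $a_iw=0$ for $i$ sufficiently large. Alternatively the same identity could be read off by specializing \eqref{eq:Borcherds-a-1} at $l=-2$ and summing over $s$, but the route through $\glb{a,\mathbf{1},w}{x_1,x_2}$ and the creation property is the most transparent.
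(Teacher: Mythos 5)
Your proof is correct, but it takes a more direct route than the paper's. The paper stays entirely inside its $s$-component formalism: it specializes the coefficient identity \eqref{eq:Borcherds-a-1} at $j=s/T$, $l=-2$ to get \eqref{eq:dadx}, combines this with the $l=-1$ case \eqref{eq:Borcherds-a-1-l}, and then sums the component operators $\cty{s}{M}{a,{\bf 1}}{n,-2}$ over $s$ (via \eqref{eq:sum-Y}) to compute $(a_{-2}{\bf 1})_{n}w=-na_{n-1}w$. You bypass the operators $\cty{s}{M}{a,b}{i,j}$ altogether: starting from the same identification $\glb{a,{\bf 1},w}{x_1,x_2}=Y_{M}(a,x_1)w$ (which the paper does establish before the lemma, so citing it is legitimate), you feed it into the third relation of Definition \ref{definition:def-vt}, expand both sides in powers of $x_1-x_2$, and match the coefficient of $(x_1-x_2)^1$, using the creation property $Y(a,x_0){\bf 1}=\sum_{m\geq 0}(a_{-m-1}{\bf 1})x_0^{m}$ on the left. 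This is shorter and makes explicit where the vacuum axiom of $V$ enters; the paper uses the same fact, but only implicitly, when it invokes Remark \ref{remark:bound} to conclude $\ty{s}{M}{a,{\bf 1}}{x_2,x_1-x_2}(w)\in M\db{x_2^{1/T}}[[x_1-x_2]]$. Your check that $\iota_{x_2,x_1-x_2}\big(Y_{M}(a,x_1)w\big)$ lands in $M\db{x_2^{1/T}}\db{x_1-x_2}$ (so that coefficient extraction is legal) is exactly the right point of care, since $Y_{M}(a,x_1)w\in M\db{x_1^{1/T}}$ guarantees it. Finally, your closing remark is accurate: specializing \eqref{eq:Borcherds-a-1} at $l=-2$ and summing over $s$, together with the $l=-1$ case, is precisely the paper's own argument, so the two approaches differ only in whether the $s$-decomposition is carried along or summed away at the outset.
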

\begin{proof}
Let $a\in V, w\in M, k\in(1/T)\Z$ and $s\in\Z$ with $0\leq s\leq T-1$.
Taking $j=s/T$ and $l=-2$ in \eqref{eq:Borcherds-a-1}, we have
\begin{align}
\label{eq:dadx}
&\cty{s}{M}{a,{\bf 1}}{\frac{s}{T}+k,-2}w+\frac{s}{T} \cty{s}{M}{a,{\bf 1}}{\frac{s}{T}+k-1,-1}w\nonumber\\
&=
\left\{
\begin{array}{ll}
-ka_{s/T+k-1}w&\mbox{ if }k\in\Z,\\
0&\mbox{ if }k\not\in\Z.
\end{array}\right.
\end{align}
Let $r\in\Z$ with $0\leq r\leq T-1$ and $n\in r/T+\Z$.
By \eqref{eq:Borcherds-a-1-l} and \eqref{eq:dadx}, we have
\begin{align*}
(a_{-2}{\bf 1})_{n}w
&=\sum_{s=0}^{T-1}\cty{s}{M}{a,{\bf 1}}{n,-2}(w)\\
&=\sum_{s=0}^{T-1}\cty{s}{M}{a,{\bf 1}}{\frac{s}{T}+(-\frac{s}{T}+n),-2}(w)\\
&=\sum_{s=0}^{T-1}\frac{-s}{T} \cty{s}{M}{a,{\bf 1}}{\frac{s}{T}+(-\frac{s}{T}+n-1),-1)(w)-(-\frac{r}{T}+n}a_{n-1}w\\
&=\frac{-r}{T} a_{n-1}w-(-\frac{r}{T}+n)a_{n-1}w\\
&=-na_{n-1}w.
\end{align*}
\end{proof}

\section{\label{section:subspace}
Subspaces of $\C[\wz,\wz^{-1}]$
}
Throughout this section we fix a non-positive integer $\lw$.
This is the lowest weight of a graded vertex algebra $V=\oplus_{i=\lw}^{\infty}V_i$
which will be discussed in Section \ref{section:zhu}.
In this section we introduce
a subspace $\zhuOi{T}{\nthr}{\none}(\alpha,\beta ; \wz)$ of 
$\C[\wz,\wz^{-1}]$ (see \eqref{eq:def-otnm} below) for $n,m\in(1/T)\N$ and $\alpha,\beta\in\Z$
and we study its properties. The subspace $\zhuOi{T}{\nthr}{\none}(\alpha,\beta ; \wz)$
will be used to define the subspace
$\zhuOi{T}{\nthr}{\none}(V)$ of $V$ in Section \ref{section:zhu}.

For $N,q\in\Z$ and $Q\in\Q$, $O(N,Q,q;\wz)$ denotes the subspace of $\C[\wz,\wz^{-1}]$
spanned by
\begin{align}
\label{eq:res-x-q}
&\Res_{\wx}\big((1+\wx)^{Q}\wx^{q+j}\sum_{i\in\Z_{\leq N}}\wz^i\wx^{-i-1}\big)\nonumber\\
&=
\sum\limits_{i=0}^{N-q-j}\binom{Q}{i}\wz^{i+q+j},\quad j=0,-1,\ldots
\end{align}
and $\wz^i, i\in\Z_{\geq N+1}$.
We note that if $N\leq q$, then $O(N,Q,q;\wz)=\C[z,z^{-1}]$.
We also note that 
\begin{align}
\label{eq:osubset}
O(N,Q,q;\wz)\subset O(N,Q,q+1;\wz)\subset\cdots.
\end{align}
A similar computation as in the proof of Lemma \ref{lemma:express-ysab}
shows the following lemma (or see \cite[Proof of Lemma 2]{MT}).

\begin{lemma}\label{lemma:modulo}
Fix $N, q\in\Z$, $Q\in\Q$ and $i\in\Z_{\leq N}$. 
Then
\begin{align}
\label{eq:zimodbinom}
\wz^{i}&\equiv
\sum\limits_{k=1}^{N-q}\sum\limits_{j=1}^{k}\binom{-Q}{-i+q+j}\binom{Q}{k-j}
\wz^{q+k}\pmod{O(N,Q,q;\wz)}.
\end{align}
\end{lemma}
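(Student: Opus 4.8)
We want to show that for $i \in \Z_{\leq N}$, the monomial $\wz^i$ is congruent modulo $O(N,Q,q;\wz)$ to the explicit combination $\sum_{k=1}^{N-q}\sum_{j=1}^{k}\binom{-Q}{-i+q+j}\binom{Q}{k-j}\wz^{q+k}$. The plan is to reduce $\wz^i$ step by step using the relations $R_j := \sum_{t=0}^{N-q-j}\binom{Q}{t}\wz^{t+q+j} \equiv 0$ for $j = 0,-1,\ldots$, which span $O(N,Q,q;\wz)$ together with the high powers $\wz^{\geq N+1}$.

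**The reduction idea.** The key observation is that the generator $R_j$ has leading term $\binom{Q}{0}\wz^{q+j} = \wz^{q+j}$ and higher terms in $\wz^{q+j+1}, \ldots, \wz^{N}$. Thus, modulo $O(N,Q,q;\wz)$, every power $\wz^{q+j}$ with $q+j \leq N$ can be rewritten as $-\sum_{t=1}^{N-q-j}\binom{Q}{t}\wz^{t+q+j}$, pushing weight strictly upward; and every power $\wz^{\geq N+1}$ is already zero. So $\wz^i$ for $i \leq N$ lies in the span of $\wz^{q+1},\ldots,\wz^{N}$ modulo the subspace. First I would make this precise: write $\wz^i \equiv \sum_{k=1}^{N-q} c_k \wz^{q+k}$ and determine the coefficients $c_k$.

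**Extracting the coefficients via inversion.** The cleanest route is to recognize the triangular structure. Writing $\wz^i \equiv \sum_{k\geq 1}\binom{-Q}{-i+q+k}\,\wz^{q+k}$ first in the completed setting and then truncating: since $\iota_{\wz}(1+\wz)^{-Q}\cdot(1+\wz)^{Q} = 1$, the coefficients $\binom{-Q}{\cdot}$ and $\binom{Q}{\cdot}$ are mutually inverse under convolution, which is exactly why the double sum $\sum_{j=1}^{k}\binom{-Q}{-i+q+j}\binom{Q}{k-j}$ appears. Concretely, I would argue as in the proof of Lemma \ref{lemma:express-ysab}: the relations $R_j \equiv 0$ can be packaged as a lower-triangular linear system (with $1$'s on the diagonal, entries $\binom{Q}{k-j}$ below) relating the $\wz^{q+j}$ to the residue expressions, and the stated formula is obtained by inverting that triangular matrix, whose inverse has entries $\binom{-Q}{\cdot}$ by the Vandermonde/Chu convolution identity $\sum_{j}\binom{-Q}{a-j}\binom{Q}{j} = \delta_{a,0}$.

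**Execution and the main obstacle.** I would set up the two column vectors $(\wz^{i})_{i}$ for $q+1 \leq i \leq N$ in terms of the normalized residue generators and write the change of basis as a unipotent lower-triangular matrix $\bigl(\binom{Q}{k-j}\bigr)$; the relations $R_j \equiv 0$ say precisely that in the new basis all coordinates vanish except the shift coming from $\wz^i$ itself, so the congruence class of $\wz^i$ is read off from the corresponding row of the inverse matrix $\bigl(\binom{-Q}{k-j}\bigr)$. The factor $\binom{-Q}{-i+q+j}$ then accounts for the starting index $i$. The main obstacle is purely bookkeeping: keeping the truncation at $N$ consistent throughout, since the convolution identity holds exactly only in the untruncated power series, and one must check that the discarded tail lies in $O(N,Q,q;\wz)$ (it does, precisely because $\wz^{\geq N+1}$ and the relations $R_j$ together span everything of weight $> q$ that we are allowed to kill). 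I expect no conceptual difficulty beyond this, and indeed the author points to the analogous computation in \cite[Proof of Lemma 2]{MT}, so a careful triangular-inversion argument mirroring Lemma \ref{lemma:express-ysab} should close it cleanly.
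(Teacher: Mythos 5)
Your proposal is correct and is essentially the paper's own proof: the paper simply appeals to ``a similar computation as in the proof of Lemma \ref{lemma:express-ysab}'' (or to \cite[Proof of Lemma 2]{MT}), which is precisely your argument --- a unipotent triangular system with entries $\binom{Q}{k-j}$ inverted by the matrix with entries $\binom{-Q}{k-j}$ via the Chu--Vandermonde convolution $\sum_{j}\binom{-Q}{a-j}\binom{Q}{j}=\delta_{a,0}$, with the truncated tail absorbed into $O(N,Q,q;\wz)$ because $\wz^i\in O(N,Q,q;\wz)$ for $i\geq N+1$. The one detail you leave implicit (converting the raw row of the inverse matrix into the stated double sum requires one further application of the same convolution identity over the complementary index range) is routine bookkeeping of exactly the kind the paper also suppresses.
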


The proof of the following lemma is similar to that of \cite[Lemma 3]{MT}. 

\begin{lemma}\label{lemma:iso-oplus}
Let $N\in\Z, q_{0},\ldots,q_{T-1}\in\Z$ and $Q_{0},\ldots,Q_{T-1}\in \Q$ such that
$Q_i\not\equiv Q_j\pmod{\Z}$ for all $i\neq j$.
The diagonal map $\C[\wz,\wz^{-1}]\ni f\mapsto (f,\ldots,f)\in \C[\wz,\wz^{-1}]^{\oplus T}$
induces an isomorphism 
\begin{align*}
\C[\wz,\wz^{-1}]/\bigcap_{{\ws}=0}^{T-1}O(N,Q_{\ws},q_{\ws};\wz)
&\rightarrow \bigoplus_{{\ws}=0}^{T-1}\C[\wz,\wz^{-1}]/O(N,Q_{\ws},q_{\ws};\wz)
\end{align*}
as vector spaces.
\end{lemma}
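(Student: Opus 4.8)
The map in question is induced by the diagonal embedding $\C[\wz,\wz^{-1}]\to\bigoplus_{s=0}^{T-1}\C[\wz,\wz^{-1}]/O(N,Q_s,q_s;\wz)$, whose kernel is precisely $\bigcap_{s}O(N,Q_s,q_s;\wz)$; hence the induced map is automatically injective, and the whole content of the lemma is its surjectivity. Since I will show that each quotient $\C[\wz,\wz^{-1}]/O(N,Q_s,q_s;\wz)$ is finite dimensional, the target is finite dimensional and surjectivity is equivalent to the dimension equality $\dim\C[\wz,\wz^{-1}]/\bigcap_{s}O(N,Q_s,q_s;\wz)=\sum_{s}\dim\C[\wz,\wz^{-1}]/O(N,Q_s,q_s;\wz)$. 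So the plan is: (i) compute each local quotient dimension; (ii) reduce the global dimension to a statement about annihilators; (iii) prove that statement using the hypothesis $Q_i\not\equiv Q_j\pmod{\Z}$.

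For (i), I would pass to the dual. A linear functional $\phi$ on $\C[\wz,\wz^{-1}]$ is recorded by its coordinates $\phi_i=\phi(\wz^i)$, and I encode it by the formal series $\hat\phi(t)=\sum_i\phi_i t^i$. Evaluating $\phi$ on the spanning set of $O(N,Q,q;\wz)$ in \eqref{eq:res-x-q} together with the monomials $\wz^i$, $i\ge N+1$, a direct computation shows that $\phi$ annihilates $O(N,Q,q;\wz)$ if and only if $(1+t^{-1})^{Q}\hat\phi(t)$ is a Laurent polynomial supported in degrees $q+1,\dots,N$; equivalently $\hat\phi(t)=(1+t^{-1})^{-Q}p(t)$ with $p(t)=\sum_{i=q+1}^{N}c_i t^i$ arbitrary. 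As $(1+t^{-1})^{-Q}$ is invertible as a formal series, the assignment $p\mapsto\phi$ is injective, so $\operatorname{Ann}O(N,Q,q;\wz)$ has dimension $\max(N-q,0)$ and therefore $\dim\C[\wz,\wz^{-1}]/O(N,Q,q;\wz)=\max(N-q,0)$. This matches Lemma \ref{lemma:modulo}, which already exhibits the classes of $\wz^{q+1},\dots,\wz^{N}$ as a spanning set of the quotient.

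For (ii)--(iii), because each $O(N,Q_s,q_s;\wz)$ has finite codimension, $\bigcap_{s}O(N,Q_s,q_s;\wz)$ is the common kernel of the finitely many functionals spanning the spaces $\operatorname{Ann}O(N,Q_s,q_s;\wz)$, so $\operatorname{Ann}\bigl(\bigcap_{s}O(N,Q_s,q_s;\wz)\bigr)=\sum_{s}\operatorname{Ann}O(N,Q_s,q_s;\wz)$ and hence $\dim\C[\wz,\wz^{-1}]/\bigcap_{s}O(N,Q_s,q_s;\wz)=\dim\sum_{s}\operatorname{Ann}O(N,Q_s,q_s;\wz)\le\sum_{s}\max(N-q_s,0)$, with equality exactly when this sum of annihilators is direct. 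Thus everything reduces to showing that a relation $\sum_{s=0}^{T-1}(1+t^{-1})^{-Q_s}p_s(t)=0$, with each $p_s$ supported in degrees $q_s+1,\dots,N$, forces all $p_s=0$. Writing $u=t^{-1}$, this is the linear independence of $(1+u)^{-Q_0},\dots,(1+u)^{-Q_{T-1}}$ over the field $\C(u)$, which holds because the exponents $-Q_s$ are pairwise distinct (being pairwise distinct modulo $\Z$); I would prove it by the standard derivation argument using $D=(1+u)\tfrac{d}{du}$, for which each $(1+u)^{-Q_s}$ is an eigenvector of eigenvalue $-Q_s$, applied to a minimal-length relation. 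Granting this, the sum of annihilators is direct, the dimensions match, and the injective diagonal map is the desired isomorphism.

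The one genuinely essential step is the last one, where the hypothesis $Q_i\not\equiv Q_j\pmod{\Z}$ enters: without distinctness of the exponents the annihilators overlap and the map fails to be surjective (three distinct lines in a plane already violate the analogous triple statement). Everything else---the dual description of $O(N,Q,q;\wz)$ and the codimension bookkeeping---is routine, the only points needing care being the well-definedness of the formal-series manipulations and the identity $\operatorname{Ann}\bigl(\bigcap_{s}O(N,Q_s,q_s;\wz)\bigr)=\sum_{s}\operatorname{Ann}O(N,Q_s,q_s;\wz)$, which is valid here precisely because the codimensions are finite.
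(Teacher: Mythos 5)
Your proposal is correct, and it reaches the lemma by a genuinely different route from the paper. The paper argues matrix-theoretically: it first replaces every $q_{\ws}$ by a common $q\le\min\{q_0,\ldots,q_{T-1}\}$ using \eqref{eq:osubset}, then invokes Lemma \ref{lemma:modulo} to record the reductions of the monomials $\wz^{N+1-T(N-q)},\ldots,\wz^{N}$ modulo each $O(N,Q_{\ws},q;\wz)$ in the square matrix $\Gamma$ of \eqref{eq:matrix-H}, and reduces everything to $\det\Gamma\neq 0$, which Subsection \ref{subsection:determinant} establishes through the closed product formula of Proposition \ref{proposition:det-a} (itself requiring a divisibility argument, a degree count, and evaluation at a special point). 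You instead dualize: you realize $\operatorname{Ann}O(N,Q,q;\wz)$ inside the Laurent field $\C\db{t^{-1}}$ as the set of series $(1+t^{-1})^{-Q}p(t)$ with $p$ supported in degrees $q+1,\ldots,N$, reduce surjectivity (injectivity being automatic in both treatments) to directness of the sum $\sum_{\ws}\operatorname{Ann}O(N,Q_{\ws},q_{\ws};\wz)$, and deduce directness from linear independence of the $(1+u)^{-Q_{\ws}}$ over $\C(u)$ via the derivation $D=(1+u)\frac{d}{du}$. Your route is softer: no determinant is computed, no reduction to a common $q$ is needed, and the hypothesis enters at exactly one transparent point, namely that a nonzero $f\in\C(u)$ with $Df=\lambda f$ forces $\lambda\in\Z$, so a minimal dependence relation would force some $Q_i-Q_j\in\Z$. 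When you write this up, state that point carefully: independence over $\C(u)$ is equivalent to the exponents being pairwise distinct \emph{modulo} $\Z$, not merely pairwise distinct (e.g.\ $(1+u)^{3/2}=(1+u)\cdot(1+u)^{1/2}$ is a dependence); your parenthetical has the right condition, and your derivation argument delivers exactly it. What the paper's heavier computation buys is the explicit determinant formula, a quantitatively stronger fact of independent interest (it is also cited again in Subsection \ref{subsection:improvement}) that makes the construction effective, since inverting $\Gamma$ gives concrete expressions for the polynomials $\unitmu{T}{\wr}{\nthr}{\none}(\alpha,\beta,i;\wz)$ of \eqref{eq:unit-s} underlying the products \eqref{eq:def-mul-x}. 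Your two auxiliary identities, $\dim\C[\wz,\wz^{-1}]/O=\dim\operatorname{Ann}O$ and $\operatorname{Ann}\bigl(\bigcap_{\ws}O_{\ws}\bigr)=\sum_{\ws}\operatorname{Ann}O_{\ws}$, are sound; the latter in fact holds for finitely many subspaces of an arbitrary vector space, so the finite-codimension caveat, while sufficient, is not even needed.
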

\begin{proof}
It is sufficient to show that the induced map is surjective.
Note that $\C[\wz,\wz^{-1}]_{\geq N+1}$ is a subspace of $O(N,Q_{\ws},q_{\ws};\wz)$
for each $s$.
Fix an integer $q$ such that $q\leq\min\{q_0,\ldots,q_{T-1}\}$.
We may assume $q\leq N$ from the comment right after \eqref{eq:res-x-q}.
Since $O(N,Q_{\ws},q;\wz)$ is a subspace of $O(N,Q_{\ws},q_{{\ws}};\wz)$ for each ${\ws}=0,\ldots,T-1$,
it is sufficient to show that
the diagonal map 
\begin{align*}
&\C[\wz,\wz^{-1}]_{N+1-T(N-q),N}\ni f\\
&\mapsto (f+O(N,Q_{\ws},q;\wz))_{s=0}^{T-1}\in
\bigoplus_{{\ws}=0}^{T-1}\C[\wz,\wz^{-1}]/O(N,Q_{\ws},q;\wz)
\end{align*}
is surjective.
For a Laurent polynomial 
$\Lambda(\wz)=\sum_{i=N+1-T(N-q)}^{N}\lambda_{i}\wz^{i}\in 
\C[\wz,\wz^{-1}]_{N+1-T(N-q),N}$,
it follows by \eqref{eq:zimodbinom}
that 
\begin{align}
\label{eq:mod-poly-s}
\Lambda(\wz)
&\equiv
\sum\limits_{i=N+1-T(N-q)}^{N}\lambda_{i}
\sum\limits_{k=1}^{N-q}
\sum\limits_{j=1}^{k}\binom{-Q_{\ws}}{-i+q+j}\binom{Q_{\ws}}{k-j}\wz^{q+k}\nonumber\\
&\pmod{O(N,Q_{\ws},q;\wz)}
\end{align}
for $\ws=0,\ldots, T-1$.
We denote
$\sum_{j=1}^{k}\binom{-Q_{\ws}}{-i+q+j}\binom{Q_{\ws}}{k-j}$
by $\alpha^{{\ws},k}_{-i+q}$ for $0\leq \ws \leq T-1,1\leq k\leq N-q$ and $i\in\Z$.
Define $T$ $T(N-q)\times (N-q)$-matrices $\Gamma_{s},\ws=0,\ldots,T-1$ by
\begin{align*}
\Gamma_{{\ws}}&=
\begin{pmatrix}
\alpha^{{\ws},1}_{(T-1)(N-q)-1}&\alpha^{{\ws},2}_{(T-1)(N-q)-1}&\cdots&\alpha^{{\ws},N-q}_{(T-1)(N-q)-1}\\
\alpha^{{\ws},1}_{(T-1)(N-q)-2}&\alpha^{{\ws},2}_{(T-1)(N-q)-2}&\cdots&\alpha^{{\ws},N-q}_{(T-1)(N-q)-2}\\
\vdots&\vdots&&\vdots\\
\alpha^{{\ws},1}_{-N+q}&\alpha^{{\ws},2}_{-N+q}&\cdots&\alpha^{{\ws},N-q}_{-N+q}
\end{pmatrix}.
\end{align*}
Since
\begin{align*}
(\lambda_{N+1-T(N-q)},\lambda_{N+2-T(N-q)},\ldots,\lambda_{N})\Gamma_{{\ws}}\begin{pmatrix}\wz^{q+1}\\\vdots\\\wz^{N}
\end{pmatrix}
\end{align*}
is equal to the right-hand side of \eqref{eq:mod-poly-s} 
for ${\ws}=0,\ldots,T-1$,
it is sufficient to show that the square matrix 
\begin{align}
\label{eq:matrix-H}
\Gamma&=(\Gamma_0\ \Gamma_1\ \cdots\ \Gamma_{T-1})
\end{align}
of order $T(N-q)$ is non-singular.
It is proved in Subsection \ref{subsection:determinant} that $\Gamma$ is non-singular.
\end{proof}

For $N\in\Z$ and $\gamma\in\Q$, define
a linear automorphism $\varphi_{N,\gamma}$ of $\C[\wz,\wz^{-1}]$
by
\begin{align}
\label{eq:def-phi}
&\varphi_{N,\gamma}(\wz^i)\nonumber\\
&=
\left\{
\begin{array}{ll}
(-1)^{i+1}
\Res_{\wx}\big((1+\wx)^{\gamma-i}\wx^i\sum_{j\in\Z_{\leq N}}\wz^j\wx^{-j-1}\big)&
\mbox{for $i\leq N$,}\\
\wz^i & \mbox{for $i\geq N+1$.}
\end{array}\right.
\end{align}

\begin{lemma}\label{lemma:def-phi-nq}
\begin{align}
\label{eq:inv-x}
&\varphi_{N,\gamma}(\Res_{\wx}\big((1+\wx)^{k}\wx^{i}\sum_{j\in \Z_{\leq N}}\wz^j\wx^{-j-1}\big))\nonumber\\
&=
(-1)^{i+1}\Res_{\wx}\big((1+\wx)^{\gamma-k-i}\wx^{i}\sum_{j\in \Z_{\leq N}}\wz^j\wx^{-j-1}\big)
\end{align}
for $k\in \Q$ and $i\in\Z_{\leq N}$.
In particular, $\varphi_{N,\gamma}^2=\id_{\C[\wz,\wz^{-1}]}$
and
\begin{align*}
\varphi_{N,\gamma}(O(N,Q,q;\wz))=O(N,\gamma-Q-q,q;\wz)
\end{align*}
for $Q\in \Q$ and $q\in\Z$.
\end{lemma}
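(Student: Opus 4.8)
The plan is to carry out every computation through the closed form
\[
E(Q,q):=\Res_{\wx}\big((1+\wx)^{Q}\wx^{q}\sum_{j\in\Z_{\leq N}}\wz^{j}\wx^{-j-1}\big)=\sum_{j=q}^{N}\binom{Q}{j-q}\wz^{j},
\]
which is just \eqref{eq:res-x-q} with $j=0$, so that \eqref{eq:def-phi} reads $\varphi_{N,\gamma}(\wz^{i})=(-1)^{i+1}E(\gamma-i,i)$ for $i\leq N$, and the left-hand side of \eqref{eq:inv-x} is precisely $\varphi_{N,\gamma}(E(k,i))$. First I would prove \eqref{eq:inv-x} term by term: every monomial $\wz^{j}$ appearing in $E(k,i)$ has $i\leq j\leq N$, so I may apply the first branch of \eqref{eq:def-phi} to each, re-expand $E(\gamma-j,j)$ by the closed form, and collect the coefficient of $\wz^{l}$. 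After reindexing with $a=j-i$ and $n=l-i$, matching coefficients reduces \eqref{eq:inv-x} to the single identity
\[
\sum_{a=0}^{n}(-1)^{a}\binom{k}{a}\binom{M-a}{n-a}=\binom{M-k}{n},\qquad M=\gamma-i .
\]

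This identity I would establish by extracting the coefficient of $\wx^{n}$ from $(1+\wx)^{M}\big(1-\tfrac{\wx}{1+\wx}\big)^{k}=(1+\wx)^{M-k}$, using the binomial series for $\big(1-\tfrac{\wx}{1+\wx}\big)^{k}$; this is legitimate because $\tfrac{\wx}{1+\wx}$ has no constant term and $\binom{k}{a},\binom{M}{a}$ are the usual polynomials in $k,M\in\Q$. This step is routine, the only care being the formal-power-series justification of the substitution.

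With \eqref{eq:inv-x} in hand, $\varphi_{N,\gamma}^{2}=\id$ is immediate on $\wz^{i}$ for $i\geq N+1$, while for $i\leq N$ I would apply \eqref{eq:inv-x} with $k=\gamma-i$ to get $\varphi_{N,\gamma}^{2}(\wz^{i})=(-1)^{i+1}(-1)^{i+1}E(0,i)=E(0,i)=\wz^{i}$, the last equality because $\binom{0}{j-i}=\delta_{i,j}$.

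The main work I expect to be in identifying the image of $O(N,Q,q;\wz)$. Writing $R=\gamma-Q-q$, the elementary identity $(1+\wx)^{R+1}\wx^{q'}-(1+\wx)^{R}\wx^{q'+1}=(1+\wx)^{R}\wx^{q'}$ yields the recursion $E(R+1,q')=E(R,q')+E(R,q'+1)$, hence by induction $E(R+p,q-p)=\sum_{t=0}^{p}\binom{p}{t}E(R,q-p+t)$ for $p\geq 0$, in which every index $q-p+t$ lies in $\{q-p,\dots,q\}\subseteq\Z_{\leq q}$, so each summand is a generator of $O(N,R,q;\wz)$. Since \eqref{eq:inv-x} gives $\varphi_{N,\gamma}(E(Q,q'))=(-1)^{q'+1}E(R+(q-q'),q')$ for a generator $E(Q,q')$ with $q'\leq q$, the displayed expansion (with $p=q-q'$) places $\varphi_{N,\gamma}(E(Q,q'))$ in $O(N,R,q;\wz)$; as $\varphi_{N,\gamma}$ fixes the remaining generators $\wz^{i}$, $i\geq N+1$, this proves $\varphi_{N,\gamma}(O(N,Q,q;\wz))\subseteq O(N,R,q;\wz)$. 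Finally, replacing $Q$ by $R$ (so $\gamma-R-q=Q$) gives $\varphi_{N,\gamma}(O(N,R,q;\wz))\subseteq O(N,Q,q;\wz)$; applying $\varphi_{N,\gamma}$ and using $\varphi_{N,\gamma}^{2}=\id$ turns this into the reverse inclusion, whence equality. The delicate point is keeping the index ranges of the generators straight, and the involution symmetry is what lets me avoid a separate surjectivity or codimension count (the degenerate case $q\geq N$, where both sides are all of $\C[\wz,\wz^{-1}]$, is handled trivially since $\varphi_{N,\gamma}$ is bijective).
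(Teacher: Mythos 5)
Your proposal is correct and follows essentially the same route as the paper: both prove \eqref{eq:inv-x} by applying $\varphi_{N,\gamma}$ termwise and resolving the resulting binomial convolution (your generating-function identity is equivalent to the paper's combination of the negation rule and Vandermonde's identity), deduce $\varphi_{N,\gamma}^2=\id$ by taking $k=\gamma-i$, and then send each generator of $O(N,Q,q;\wz)$ to a binomial combination of generators of $O(N,\gamma-Q-q,q;\wz)$ (your inductive expansion of $E(R+p,q-p)$ is exactly the paper's direct expansion of the factor $(1+\wx)^{-d}$). The only cosmetic difference is that you make explicit the reverse inclusion via the involution symmetry, which the paper leaves implicit.
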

\begin{proof}
We simply write $\varphi=\varphi_{N,\gamma}$.
Let $i\in \Z_{\leq N}$.
Since
\begin{align*}
\varphi(\wz^i)&=
(-1)^{i+1}\sum_{j=0}^{N-i}\binom{\gamma-i}{j}z^{i+j},
\end{align*}
we have
\begin{align*}
&\varphi(\Res_{\wx}(1+\wx)^{k}\wx^{i}\sum_{j\in \Z_{\leq N}}\wz^j\wx^{-j-1})=
\sum_{j=0}^{N-i}\binom{k}{j}\varphi(z^{i+j})\\
&=
\sum_{j=0}^{N-i}\binom{k}{j}(-1)^{i+j+1}
\sum_{m=0}^{N-i-j}\binom{\gamma-i-j}{m}z^{i+j+m}\\
&=
\sum_{j=0}^{N-i}\binom{k}{j}(-1)^{i+j+1+m}
\sum_{m=0}^{N-i-j}\binom{-\gamma+i+j+m-1}{m}z^{i+j+m}\\
&=
\sum_{l=0}^{N-i}(-1)^{i+1+l}z^{i+l}
\sum_{\begin{subarray}{l}
0\leq j,m\leq N-i\\
j+m=l
\end{subarray}
}
\binom{k}{j}\binom{-\gamma+i+l-1}{m}\\
&=
\sum_{l=0}^{N-i}(-1)^{i+1+l}z^{i+l}
\binom{k-\gamma+i+l-1}{l}\\
&=(-1)^{i+1}
\sum_{l=0}^{N-i}\binom{-k+\gamma-i}{l}
z^{i+l}\\
&=
(-1)^{i+1}\Res_{\wx}\big((1+\wx)^{\gamma-k-i}\wx^{i}\sum_{j\in \Z_{\leq N}}\wz^j\wx^{-j-1}\big).
\end{align*}
By this, $\varphi^2(z^j)=z^j$ for $j\in\Z$. 
Since
\begin{align*}
&\varphi(\Res_{\wx}\big((1+\wx)^{Q}\wx^{q+d}\sum_{j\in\Z_{\leq N}}\wz^j\wx^{-j-1}\big)\\
&=(-1)^{q+d+1}\Res_{\wx}\big((1+\wx)^{\gamma-Q-q-d}\wx^{q+d}\sum_{j\in \Z_{\leq N}}\wz^j\wx^{-j-1}\big)\\
&=(-1)^{q+d+1}\sum_{m=0}^{-d}\binom{-d}{m}
\Res_{\wx}\big((1+\wx)^{\gamma-Q-q}\wx^{q+d+m}\sum_{j\in \Z_{\leq N}}\wz^j\wx^{-j-1}\big)
\end{align*}
for $d\in \Z_{\leq 0}$, we have 
$\varphi_{N,\gamma}(O(N,Q,q;\wz))=O(N,\gamma-Q-q,q;\wz)$.
\end{proof}

Throughout the rest of this section,
$m=l_1+i_1/T,p=l_2+i_2/T,n=l_3+i_3/T\in(1/T)\N$ with
$l_1,l_2,l_3\in\N$ and $0\leq i_1,i_2,i_3\leq T-1$.
We always denote $m,p,n$ as above until further notice.
For $i,j\in(1/T)\Z$,
$\rbk{i}{j}$ denotes the integer uniquely determined by 
the conditions 
\begin{align}
\label{eq:wrij}
0\leq \rbk{i}{j}\leq T-1\mbox{ and }i-j\equiv \frac{\rbk{i}{j}}{T}\pmod{\Z}.
\end{align} 
For $0\leq {\ws}\leq T-1$, $\ws^{\vee}$ denotes the integer uniquely determined by
the conditions 
\begin{align}
\label{eq:def-vee}
0\leq \ws^{\vee}\leq T-1\mbox{ and }
i_1-i_3\equiv \ws+\ws^{\vee}\pmod T.
\end{align}

For $\ws=0,\ldots,T-1$ and $\alpha,\beta\in\Z$, define 
\begin{align}
\label{eq:def-otnms}
\zhuOit{T}{\ws}{n}{m}(\alpha,\beta ; \wz)&=O(\alpha+\beta-1-\lw,\alpha-1+l_1+\delta(\ws\leq i_1)+\dfrac{\ws}{T},\nonumber\\
&\qquad -l_1-l_3-\delta(\ws\leq i_1)-\delta(T\leq \ws+i_3)-1;\wz)
\end{align}
and 
\begin{align}
\label{eq:def-otnm}
\zhuOi{T}{n}{m}(\alpha,\beta ; \wz)&=\bigcap_{s=0}^{T-1}
\zhuOit{T}{\ws}{n}{m}(\alpha,\beta ; \wz),
\end{align}
where $\lw$ is the fixed non-positive integer as stated at the beginning of this section
and $\delta(i\leq j)$ is defined in \eqref{eq:delta-function}.
For $\alpha,\beta\in\Z$, $j\in \Z_{\leq 0}$ and $s=0,\ldots,T-1$,
$\fo{T}{\ws}{\nthr}{\none}(\alpha,\beta,j;\wz)$
denotes the Laurent polynomial in $\zhuOit{T}{\ws}{n}{m}(\alpha,\beta ; \wz)$ defined by
\eqref{eq:res-x-q}, that is,
\begin{align}
\label{eq:def-o-poly}
&\fo{T}{\ws}{\nthr}{\none}(\alpha,\beta,j;\wz)\nonumber\\
&=
\Res_{\wx}\big((1+\wx)^{\alpha-1+l_1+\delta(s\leq i_1)+s/T}
\wx^{-l_1-l_3-\delta(s\leq i_1)-\delta(T\leq s+i_3)-1+j}\nonumber\\
&\quad{}\times
\sum_{\begin{subarray}{c}i\in\Z\\i\leq\alpha+\beta-1-\lw\end{subarray}}\wz^{i}\wx^{-i-1}\big)\nonumber\\
&=\sum_{i=0}^{\alpha+\beta-\lw+l_1+l_3+\delta(\ws\leq i_1)+\delta(T\leq \ws+i_3)-j}
\binom{\alpha-1+l_1+\delta(\ws\leq i_1)+\dfrac{\ws}{T}}{i}\nonumber\\
&\quad{}\times
\wz^{i-l_1-l_3-\delta(\ws\leq i_1)-\delta(T\leq \ws+i_3)-1+j}.
\end{align}
The disjoint union  $\{\fo{T}{\ws}{\nthr}{\none}(\alpha,\beta,j;\wz)\ |\ j=0,-1,\ldots\}
\cup \{\wz^i\ |\ i\geq \alpha+\beta-\lw\}$
spans $\zhuOit{T}{\ws}{n}{m}(\alpha,\beta;\wz)$.

\begin{lemma}
\label{lemma:descend}
Let 
$
m^{\prime}=l_1^{\prime}+i_1^{\prime}/T,
n^{\prime}=l_3^{\prime}+i_3^{\prime}/T\in(1/T)\N$ with $l_1^{\prime},l_3^{\prime}\in\N$
and $0\leq i_1^{\prime},i_3^{\prime}\leq T-1$.
If $m^{\prime}\leq m$ and $n^{\prime}\leq n$,
then $\zhuOit{T}{\ws}{n}{m}(\alpha,\beta ; \wz)
\subset
\zhuOit{T}{\ws}{\nthr^{\prime}}{\none^{\prime}}(\alpha,\beta ; \wz)$ for $\alpha,\beta\in\Z$ and $s=0,\ldots,T-1$.
In particular, $\zhuOi{T}{n}{m}(\alpha,\beta ; \wz)
\subset \zhuOi{T}{\nthr^{\prime}}{\none^{\prime}}(\alpha,\beta ; \wz)$.
\end{lemma}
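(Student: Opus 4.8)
The plan is to reduce the assertion to the one-variable spaces $O(N,Q,q;\wz)$ and to combine two monotonicity statements, one in the last argument $q$ and one in the middle argument $Q$ (the first argument $N$ staying fixed). By \eqref{eq:def-otnms} we may write $\zhuOit{T}{\ws}{\nthr}{\none}(\alpha,\beta;\wz)=O(N,Q,q;\wz)$ with $N=\alpha+\beta-1-\lw$, $Q=\alpha-1+a+\ws/T$ and $q=-a-c-1$, where $a=l_1+\delta(\ws\leq i_1)$ and $c=l_3+\delta(T\leq \ws+i_3)$; likewise $\zhuOit{T}{\ws}{\nthr^{\prime}}{\none^{\prime}}(\alpha,\beta;\wz)=O(N,Q^{\prime},q^{\prime};\wz)$ with the \emph{same} $N$ and with $Q^{\prime},q^{\prime},a^{\prime},c^{\prime}$ the analogous quantities for $\none^{\prime},\nthr^{\prime}$. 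Thus $Q-Q^{\prime}=a-a^{\prime}$ and $q^{\prime}-q=(a-a^{\prime})+(c-c^{\prime})$, so the whole problem is controlled by the two integers $a-a^{\prime}$ and $c-c^{\prime}$.

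The crux is the auxiliary containment
\begin{align*}
O(N,Q,q;\wz)\subset O(N,Q-1,q+1;\wz)\qquad(N,q\in\Z,\ Q\in\Q),
\end{align*}
which I would prove directly from the generating description \eqref{eq:res-x-q}. Substituting $(1+\wx)^{Q}=(1+\wx)^{Q-1}+\wx(1+\wx)^{Q-1}$ inside the residue shows that the $j$-th generator of $O(N,Q,q;\wz)$ equals the sum of the $O(N,Q-1,q+1;\wz)$-generators built from $\wx^{q+j}$ and $\wx^{q+j+1}$ (legitimate since $q+j,q+j+1\leq q+1$), while the generators $\wz^i$, $i\geq N+1$, are common to the two spaces. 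Iterating $t$ times gives $O(N,Q,q;\wz)\subset O(N,Q-t,q+t;\wz)$ for every $t\in\N$. This is the one genuinely new ingredient, and the step I expect to require the most care, since the middle parameters $Q$ and $Q^{\prime}$ genuinely differ and cannot be compared by the elementary inclusion \eqref{eq:osubset} alone.

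It then remains to check that both shifts point in the right direction. Since $\none^{\prime}\leq\none$ one has $a^{\prime}\leq a$: if $l_1^{\prime}<l_1$ then $a^{\prime}\leq l_1^{\prime}+1\leq l_1\leq a$, and if $l_1^{\prime}=l_1,\ i_1^{\prime}\leq i_1$ then $\delta(\ws\leq i_1^{\prime})\leq\delta(\ws\leq i_1)$; the same dichotomy applied to $\nthr^{\prime}\leq\nthr$ gives $c^{\prime}\leq c$. Setting $t=a-a^{\prime}\geq 0$, the iterated auxiliary containment together with $Q-t=Q^{\prime}$ yields $O(N,Q,q;\wz)\subset O(N,Q^{\prime},q+t;\wz)$, and since $q^{\prime}-(q+t)=c-c^{\prime}\geq 0$ the monotonicity \eqref{eq:osubset} gives $O(N,Q^{\prime},q+t;\wz)\subset O(N,Q^{\prime},q^{\prime};\wz)$. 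Chaining these proves $\zhuOit{T}{\ws}{\nthr}{\none}(\alpha,\beta;\wz)\subset\zhuOit{T}{\ws}{\nthr^{\prime}}{\none^{\prime}}(\alpha,\beta;\wz)$ for every $\ws=0,\ldots,T-1$, and intersecting over $\ws$ in \eqref{eq:def-otnm} gives $\zhuOi{T}{\nthr}{\none}(\alpha,\beta;\wz)\subset\zhuOi{T}{\nthr^{\prime}}{\none^{\prime}}(\alpha,\beta;\wz)$, the final assertion.
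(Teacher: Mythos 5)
Your proof is correct and is in substance the same as the paper's: the paper establishes the lemma via the single identity $\fo{T}{\ws}{\nthr}{\none}(\alpha,\beta,j;\wz)=\sum_{i=0}^{\rho_1}\binom{\rho_1}{i}\fo{T}{\ws}{\nthr^{\prime}}{\none^{\prime}}(\alpha,\beta,j-\rho_1-\rho_3+i;\wz)$ with $\rho_1=a-a^{\prime}\geq 0$ and $\rho_3=c-c^{\prime}\geq 0$ (in your notation), which is precisely what your iterated containment $O(N,Q,q;\wz)\subset O(N,Q-1,q+1;\wz)$ composed with the $q$-monotonicity \eqref{eq:osubset} produces once $(1+\wx)^{\rho_1}$ is expanded in one step rather than one factor at a time. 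Your dichotomy argument showing $a^{\prime}\leq a$ and $c^{\prime}\leq c$ is likewise the justification the paper leaves implicit when it asserts that $\rho_1$ and $\rho_3$ are non-negative integers.
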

\begin{proof}
Let $
\rho_1=l_1+\delta(\ws\leq i_1)-(l_1^{\prime}+\delta(\ws\leq i_1^{\prime}))$
and $\rho_3=l_3+\delta(T\leq \ws+i_3)-(l_3^{\prime}+\delta(T\leq \ws+i_3^{\prime}))$
for $s=0,\ldots,T-1$.
It follows by $m^{\prime}\leq m$ and $n^{\prime}\leq n$ that 
$\rho_1$ and $\rho_3$ are non-negative integers.
Since
\begin{align*}
\fo{T}{\ws}{\nthr}{\none}(\alpha,\beta,j;\wz)
&=\sum_{i=0}^{\rho_1}\binom{\rho_1}{i}\fo{T}{\ws}{\nthr^{\prime}}{\none^{\prime}}(\alpha,\beta,j-\rho_1-\rho_3+i;\wz),
\end{align*}
the proof is complete.
\end{proof}

A direct computation shows 
\begin{align}
\label{eq:ssvee}
\dfrac{-s-s^{\vee}+i_1-i_3}{T}+\delta(T\leq s^{\vee}+i_3)&=\delta(s\leq i_1)-1
\end{align}
for $s=0,\ldots,T-1$ and hence
\begin{align}
\label{eq:svee^2-s^2}
\delta(s^{\vee}\leq i_1)+\delta(T\leq s^{\vee}+i_3)=\delta(s\leq i_1)+\delta(T\leq s+i_3).
\end{align}
For a non-positive integer $j$, it follows by \eqref{eq:inv-x},\eqref{eq:ssvee}
and \eqref{eq:svee^2-s^2}
that
\begin{align*}
&\varphi_{\alpha+\beta-1-\lw,\alpha+\beta+m-n-2}(\fo{T}{\ws^{\vee}}{\nthr}{\none}(\beta,\alpha,j;\wz))\nonumber\\
&=(-1)^{-l_1-l_3-\delta(s\leq i_1)-\delta(T\leq s+i_3)+j}\\
&\quad{}\times
\Res_{\wx}\big((1+\wx)^{\alpha-1+l_1+\delta(\ws\leq i_1)+\ws/T-j}
\wx^{-l_1-l_3-\delta(s\leq i_1)-\delta(T\leq s+i_3)-1+j}\nonumber\\
&\quad{}\times
\sum_{\begin{subarray}{c}i\in\Z\\i\leq\alpha+\beta-1-\lw\end{subarray}}\wz^{i}\wx^{-i-1}\big)\nonumber\\
&=(-1)^{-l_1-l_3-\delta(s\leq i_1)-\delta(T\leq s+i_3)+j}\sum_{k=0}^{-j}\binom{-j}{k}
\fo{T}{\ws}{\nthr}{\none}(\alpha,\beta,j+k;\wz)
\end{align*}
and hence 
\begin{align}
\label{eq:phi-ab}
\varphi_{\alpha+\beta-1-\lw,\alpha+\beta+m-n-2}(\zhuOit{T}{\ws^{\vee}}{\nthr}{\none}(\beta,\alpha ; \wz))
&=\zhuOit{T}{\ws}{n}{m}(\alpha,\beta;\wz).
\end{align}
Thus,
$\varphi_{\alpha+\beta-1-\lw,\alpha+\beta+m-n-2}$ induces an isomorphism
\begin{align}
\label{eq:poly-o-phi-s}
\C[\wz,\wz^{-1}]/\zhuOit{T}{\ws^{\vee}}{\nthr}{\none}(\beta,\alpha ; \wz)
&\rightarrow
\C[\wz,\wz^{-1}]/\zhuOit{T}{\ws}{n}{m}(\alpha,\beta ; \wz)
\end{align}
and hence
\begin{align}
\label{eq:poly-o-phi}
\C[\wz,\wz^{-1}]/\bigcap_{\ws=0}^{T-1}\zhuOit{T}{\ws}{n}{m}(\beta,\alpha ; \wz)&\cong
\C[\wz,\wz^{-1}]/\bigcap_{\ws=0}^{T-1}\zhuOit{T}{\ws}{n}{m}(\alpha,\beta ; \wz)
\end{align}
by Lemma \ref{lemma:modulo}.

For $\wr=0,\dots,T-1$ and $i\in\Z_{\leq \alpha+\beta-1-\lw}$, it follows by the argument in the proof of 
Lemma \ref{lemma:iso-oplus}
that there exists a unique Laurent polynomial in 
$\C[z,z^{-1}]_{\alpha+\beta-\lw-T(\alpha+\beta-\lw+l_1+l_3+2),\alpha+\beta-1-\lw}$,
which we denote by 
$\unitmu{T}{\wr}{\nthr}{\none}(\alpha,\beta,i;\wz)$, such that
\begin{align}
\label{eq:unit-s}
&\unitmu{T}{\wr}{\nthr}{\none}(\alpha,\beta,i;\wz)
\equiv \delta_{r,s}z^i\nonumber\\
&\pmod{
O(\alpha+\beta-1-\lw,\alpha-1+l_1+\delta(\ws\leq i_1)+\frac{\ws}{T},-l_1-l_3-3; \wz)},\nonumber\\
&\qquad s=0,\ldots,T-1.
\end{align}
We also define 
\begin{align}
\label{eq:unit-zero}
\unitmu{T}{\wr}{\nthr}{\none}(\alpha,\beta,i;\wz)&=0\mbox{ for $i\in\Z_{\geq \alpha+\beta-\lw}$}
\end{align}
for convenience.
Since 
\begin{align*}
-l_1-l_3-3\leq -l_1-l_3-\delta(\ws\leq i_1)-\delta(T\leq \ws+i_3)-1,
\end{align*}
it follows by \eqref{eq:osubset}, \eqref{eq:unit-s} and \eqref{eq:unit-zero} that
\begin{align}
\label{eq:unit-qs}
\unitmu{T}{\wr}{\nthr}{\none}(\alpha,\beta,i;\wz)&\equiv 
\delta_{r,s}\wz^i\pmod{\zhuOit{T}{\ws}{\nthr}{\none}(\alpha,\beta;\wz)}
\end{align}
for $r,s=0,\ldots,T-1$ and $i\in\Z$.
It follows from Lemma \ref{lemma:iso-oplus}
that 
\begin{align}
\label{eq:sum-e-s}
\sum_{s=0}^{T-1}
\unitmu{T}{\ws}{\nthr}{\none}(\alpha,\beta,i;\wz)\equiv \wz^i
\pmod{\zhuOi{T}{n}{m}(\alpha,\beta;\wz)}
\end{align}
for $i\in\Z$.
Define a Laurent polynomial $\pmul{T}{\nthr}{\ntwo}{\none}(\alpha,\beta;\wz)$ by
\begin{align}
\label{eq:def-mul-x}
&\pmul{T}{\nthr}{\ntwo}{\none}(\alpha,\beta;\wz)\nonumber\\
&=\sum_{i=0}^{l_2}
\binom{-l_1-l_3+l_2-\delta(\rbk{p}{n}\leq i_1)-\delta(T\leq \rbk{p}{n}+i_3)}{i}\nonumber\\
&\quad{}\times
\Res_{\wx}\big((1+\wx)^{\alpha-1+l_1+\delta({\rbk{p}{n}}\leq i_1)+\rbk{p}{n}/T}\nonumber\\
&\quad{}\times
\wx^{-l_1-l_3+l_2-\delta(\rbk{p}{n}\leq i_1)-\delta(T\leq \rbk{p}{n}+i_3)-i}
\sum_{j\in\Z}
\unitmu{T}{\rbk{p}{n}}{\nthr}{\none}(\alpha,\beta,j;\wz)\wx^{-j-1}
\big)\nonumber\\
&\in\C[z,z^{-1}]_{\alpha+\beta-\lw-T(\alpha+\beta-\lw+l_1+l_3+2),\alpha+\beta-1-\lw},
\end{align}
where $\rbk{p}{n}$ is defined in \eqref{eq:wrij}.
This is used to define the product $*^{T}_{n,p,m}$ on a vertex algebra  
in Section \ref{section:zhu}.

We denote $\Span_{\C}\{ \wz^i\in \C[\wz,\wz^{-1}]\ |\ i\neq -1\}$ by $\C[\wz,\wz^{-1}]_{\neq -1}$.
The following two results will be used to compute ${\bf 1}*^{T}_{n,p,m}a$ for $a\in V$ in
Section \ref{section:zhu}.
 
\begin{lemma}\label{lemma:n-neq-p-zero}
Let $\alpha,r\in\Z$ with $0\leq r\leq T-1$. 
Then 
\begin{align*}
\sum_{j\in\Z}
\unitmu{T}{\wr}{\nthr}{\none}(0,\alpha,j;\wz)\wx^{-j-1}
&\equiv
\delta_{r,0}\wz^{-1}
\pmod{(\C[\wz,\wz^{-1}]_{\neq -1})\db{\wx}}.
\end{align*}
\end{lemma}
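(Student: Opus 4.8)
The plan is to reduce the congruence to a single coefficient computation. Reducing modulo $(\C[\wz,\wz^{-1}]_{\neq -1})\db{\wx}$ retains only the $\wz^{-1}$-component of each coefficient of $\wx$, so, writing $\Res_{\wz}$ for the functional extracting the coefficient of $\wz^{-1}$, the assertion is equivalent to
\begin{align*}
\Res_{\wz}\big(\unitmu{T}{\wr}{\nthr}{\none}(0,\alpha,j;\wz)\big)&=\delta_{\wr,0}\delta_{j,-1}\qquad\text{for every }j\in\Z,
\end{align*}
since then $\sum_{j\in\Z}\Res_{\wz}(\unitmu{T}{\wr}{\nthr}{\none}(0,\alpha,j;\wz))\,\wz^{-1}\wx^{-j-1}=\delta_{\wr,0}\wz^{-1}$.

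First I would specialize \eqref{eq:unit-qs} to the first two arguments $0,\alpha$, to $i=j$, and to $s=0$, obtaining for every $j\in\Z$ the single congruence
\begin{align*}
\unitmu{T}{\wr}{\nthr}{\none}(0,\alpha,j;\wz)&\equiv \delta_{\wr,0}\wz^{j}\pmod{\zhuOit{T}{0}{\nthr}{\none}(0,\alpha;\wz)}.
\end{align*}
Hence it suffices to establish (i) that $\Res_{\wz}$ annihilates $\zhuOit{T}{0}{\nthr}{\none}(0,\alpha;\wz)$ and (ii) that $\Res_{\wz}(\wz^{j})=\delta_{j,-1}$; granting these, applying $\Res_{\wz}$ to the congruence gives $\Res_{\wz}(\unitmu{T}{\wr}{\nthr}{\none}(0,\alpha,j;\wz))=\delta_{\wr,0}\Res_{\wz}(\wz^{j})=\delta_{\wr,0}\delta_{j,-1}$. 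Statement (ii) is immediate, so the whole lemma rests on (i).

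The crux of (i) is an integrality feature special to $s=0$. Evaluating \eqref{eq:def-otnms} at first two arguments $0,\alpha$ and $s=0$, and using $\delta(0\leq i_1)=1$ together with $\delta(T\leq i_3)=0$ (valid since $0\leq i_3\leq T-1$), one finds $\zhuOit{T}{0}{\nthr}{\none}(0,\alpha;\wz)=O(\alpha-1-\lw,\,l_1,\,-l_1-l_3-2;\wz)$; in particular the exponent of $(1+\wx)$ is the non-negative integer $l_1$. By \eqref{eq:def-o-poly} the corresponding spanning polynomials are $\fo{T}{0}{\nthr}{\none}(0,\alpha,j;\wz)=\sum_{i\geq 0}\binom{l_1}{i}\wz^{i-l_1-l_3-2+j}$ for $j\leq 0$, whose coefficient of $\wz^{-1}$ is $\binom{l_1}{l_1+l_3+1-j}$. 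For every $j\leq 0$ the lower index obeys $l_1+l_3+1-j>l_1$, so this binomial coefficient vanishes because $l_1\in\N$; and the remaining spanning monomials $\wz^{i}$ with $i\geq\alpha-\lw\geq 0$ satisfy $\Res_{\wz}(\wz^i)=0$. This proves (i), and with it the lemma.

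The step demanding the most care is exactly this vanishing of every $\wz^{-1}$-coefficient of the spanning polynomials, where the polynomiality of $(1+\wx)^{l_1}$ (equivalently, that the exponent $l_1$ is a non-negative integer) is indispensable: all the lower binomial indices $l_1+l_3+1-j$ that occur ($j\leq 0$) strictly exceed $l_1$. For $s\neq 0$ the corresponding exponent $-1+l_1+\delta(s\leq i_1)+s/T$ is not an integer, so $(1+\wx)$ raised to it is a genuine infinite series and the analogous residues do not vanish; this is the structural reason why only the term $\wr=0$ survives.
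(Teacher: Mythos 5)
Your proposal is correct and is essentially the paper's own proof: both hinge on the fact that for $s=0$ the exponent of $(1+\wx)$ in \eqref{eq:def-o-poly} is the non-negative integer $l_1$, so each spanning polynomial $\fo{T}{0}{\nthr}{\none}(0,\alpha,j;\wz)$, $j\in\Z_{\leq 0}$, has no $\wz^{-1}$-term (the paper records the marginally stronger statement that these polynomials lie in $\C[\wz,\wz^{-1}]_{\leq -2}$), after which \eqref{eq:unit-qs} with $s=0$ yields the congruence. The differences are cosmetic: you phrase the conclusion through the residue functional $\Res_{\wz}$ and explicitly dispose of the monomial generators $\wz^{i}$, $i\geq\alpha-\lw$, under the tacit assumption $\alpha\geq\lw$ --- an assumption the paper's proof shares, since it passes over those generators in silence.
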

\begin{proof}
Since 
\begin{align*}
\fo{T}{0}{\nthr}{\none}(0,\alpha,j;\wz)
&=\sum_{i=0}^{\alpha+\lw+l_1+l_3+1-j}
\binom{l_1}{i}\wz^{i-l_1-l_3-2+j}\in\C[\wz,\wz^{-1}]_{\leq -2}
\end{align*}
for all $j\in\Z_{\leq 0}$,
$\zhuOit{T}{0}{\nthr}{\none}(0,\alpha;\wz)$ is a subspace of 
$\C[\wz,\wz^{-1}]_{\neq -1}$.
By \eqref{eq:unit-qs},
we have the desired result.
\end{proof}

\begin{lemma}\label{lemma:1-poly}
For $\alpha\in\Z$, we have
\begin{align*}
\pmul{T}{\nthr}{\ntwo}{\none}(0,\alpha;\wz)
&\equiv \delta_{n,p}\wz^{-1}
\pmod{\C[\wz,\wz^{-1}]_{\neq -1}}.
\end{align*}
\end{lemma}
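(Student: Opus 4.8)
The plan is to reduce the entire statement to Lemma~\ref{lemma:n-neq-p-zero}, which already holds for \emph{every} $\alpha$, followed by a single Vandermonde convolution. The parameter $\alpha$ will drop out completely once the inner generating series is replaced, so there is no need to treat $\alpha$ case by case.

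First I would write out the definition \eqref{eq:def-mul-x} of $\pmul{T}{\nthr}{\ntwo}{\none}(0,\alpha;\wz)$, abbreviating $\wr=\rbk{p}{n}$. In that expression $\alpha$ (in the role of the second argument $\beta$) enters only through the generating series $\sum_{j\in\Z}\unitmu{T}{\wr}{\nthr}{\none}(0,\alpha,j;\wz)\wx^{-j-1}$, and by Lemma~\ref{lemma:n-neq-p-zero} this series is congruent to $\delta_{\wr,0}\wz^{-1}$ modulo $(\C[\wz,\wz^{-1}]_{\neq -1})\db{\wx}$, uniformly in $\alpha$.

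Next I would observe that each summand of \eqref{eq:def-mul-x} has the shape $\Res_{\wx}\big(g(\wx)\,S(\wx,\wz)\big)$, where $S(\wx,\wz)$ is the above series and $g(\wx)=(1+\wx)^{Q}\wx^{d}\in\C\db{\wx}$ is a Laurent series in $\wx$ with only finitely many negative powers (note $(1+\wx)^{Q}$ is a genuine power series even for non-integral $Q$). Since multiplication by $g(\wx)$ and extraction of the coefficient of $\wx^{-1}$ carry $(\C[\wz,\wz^{-1}]_{\neq -1})\db{\wx}$ into $\C[\wz,\wz^{-1}]_{\neq -1}$, I may replace $S(\wx,\wz)$ by $\delta_{\wr,0}\wz^{-1}$ inside every summand without changing the class modulo $\C[\wz,\wz^{-1}]_{\neq -1}$. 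When $\wr\neq 0$ this already gives $\pmul{T}{\nthr}{\ntwo}{\none}(0,\alpha;\wz)\equiv 0$; moreover $\wr=\rbk{p}{n}\neq 0$ forces $i_2\neq i_3$, hence $n\neq p$ and $\delta_{n,p}=0$, so the claim holds in this case.

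It remains to treat $\wr=0$. Here $0\le i_1,i_3\le T-1$ give $\delta(\wr\leq i_1)=1$ and $\delta(T\leq \wr+i_3)=0$, so the exponents collapse and each residue becomes $\Res_{\wx}\big((1+\wx)^{l_1}\wx^{l_2-l_1-l_3-1-i}\big)=\binom{l_1}{l_1+l_3-l_2+i}$. Thus
\begin{align*}
\pmul{T}{\nthr}{\ntwo}{\none}(0,\alpha;\wz)&\equiv \wz^{-1}\sum_{i=0}^{l_2}\binom{l_2-l_1-l_3-1}{i}\binom{l_1}{l_1+l_3-l_2+i}\pmod{\C[\wz,\wz^{-1}]_{\neq -1}}.
\end{align*}
Rewriting $\binom{l_1}{l_1+l_3-l_2+i}=\binom{l_1}{l_2-l_3-i}$ and extending the sum to all $i\ge 0$ (the extra terms vanish because the lower index becomes negative), the Vandermonde convolution collapses the sum to $\binom{l_2-l_3-1}{l_2-l_3}$. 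This is exactly where the Kronecker $\delta$ is born: $\binom{l_2-l_3-1}{l_2-l_3}=\delta_{l_2,l_3}$, and since $\wr=0$ means $i_2=i_3$, we have $n=p$ iff $l_2=l_3$, so the coefficient equals $\delta_{n,p}$. I expect this Vandermonde evaluation, rather than the (routine) residue-through-congruence step, to be the real content of the proof.
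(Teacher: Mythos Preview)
Your proof is correct and follows essentially the same route as the paper: both invoke Lemma~\ref{lemma:n-neq-p-zero} to dispose of the case $\rbk{p}{n}\neq 0$ (equivalently $n\not\equiv p\pmod\Z$), and then reduce the case $\rbk{p}{n}=0$ to a binomial-coefficient identity. The only difference is cosmetic: the paper defers the final computation to \cite[Lemma~4.7]{DJ1}, whereas you carry out the Vandermonde convolution explicitly to obtain $\binom{l_2-l_3-1}{l_2-l_3}=\delta_{l_2,l_3}=\delta_{n,p}$.
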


\begin{proof}
If $n\not\equiv p\pmod{\Z}$, then it follows by Lemma \ref{lemma:n-neq-p-zero} that
\begin{align*}
\pmul{T}{\nthr}{\ntwo}{\none}(0,\alpha;\wz)
\equiv 0\pmod{\C[\wz,\wz^{-1}]_{\neq -1}}.
\end{align*}
Suppose $n\equiv p\pmod{\Z}$. By Lemma \ref{lemma:n-neq-p-zero} again,
the same computation as in the proof of \cite[Lemma 4.7]{DJ1} shows 
\begin{align*}
\pmul{T}{\nthr}{\ntwo}{\none}(0,\alpha;\wz)
&\equiv \delta_{n,p}\wz^{-1}
\pmod{\C[\wz,\wz^{-1}]_{\neq -1}}.
\end{align*}
\end{proof}

The following result will be used in order to obtain Lemma \ref{lemma:ab-ba-V},
which induces the commutator formula in Lemma \ref{lemma:commutative}. 
\begin{lemma}\label{lemma:comm-x}
For $\alpha,\beta\in\Z$, we have
\begin{align}
\label{eq:comm-poly}
&\pmul{T}{\nthr}{\ntwo}{\none}(\alpha,\beta;\wz)-\varphi_{\alpha+\beta-1-\lw,\alpha+\beta+m-n-2}(
\pmul{T}{\nthr}{\none+\nthr-\ntwo}{\none}(\beta,\alpha;\wz))\nonumber\\
&\quad{}-\Res_{\wx}(1+\wx)^{\alpha-1+p-n}
\sum_{j\in\Z}\unitmu{T}{\rbk{p}{n}}{\nthr}{\none}(\alpha,\beta,j;\wz)\wx^{-j-1}
\in \zhuOi{T}{n}{m}(\alpha,\beta;\wz).
\end{align}
\end{lemma}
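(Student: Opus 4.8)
Since $\zhuOi{T}{n}{m}(\alpha,\beta;\wz)=\bigcap_{\ws=0}^{T-1}\zhuOit{T}{\ws}{n}{m}(\alpha,\beta;\wz)$ by \eqref{eq:def-otnm}, the plan is to fix $\ws\in\{0,\ldots,T-1\}$ and show that the left-hand side of \eqref{eq:comm-poly} lies in $\zhuOit{T}{\ws}{n}{m}(\alpha,\beta;\wz)$. The arithmetic hinge is the observation that $\rbk{m+n-p}{n}=\ws^{\vee}$ exactly when $\ws=\rbk{p}{n}$, and $\rbk{m+n-p}{n}\neq\ws^{\vee}$ otherwise; this is immediate from \eqref{eq:wrij} and \eqref{eq:def-vee} once one reduces $\rbk{p}{n}\equiv i_2-i_3$, $\rbk{m+n-p}{n}\equiv i_1-i_2$, and $\ws+\ws^{\vee}\equiv i_1-i_3\pmod{T}$. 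I abbreviate $N=\alpha+\beta-1-\lw$ and $\gamma=\alpha+\beta+m-n-2$, and I use throughout that the assignment $(g_j)_j\mapsto\Res_{\wx}((1+\wx)^{Q}\wx^{q}\sum_j g_j\wx^{-j-1})=\sum_j\binom{Q}{j-q}g_j$ is a finite $\C$-linear combination of the coefficients $g_j$, so it sends $\zhuOit{T}{\ws}{n}{m}$-valued coefficients into $\zhuOit{T}{\ws}{n}{m}$.

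First I would treat the case $\ws\neq\rbk{p}{n}$, in which all three terms vanish separately. By \eqref{eq:unit-qs} we have $\unitmu{T}{\rbk{p}{n}}{n}{m}(\alpha,\beta,j;\wz)\equiv 0\pmod{\zhuOit{T}{\ws}{n}{m}(\alpha,\beta;\wz)}$, so both $\pmul{T}{n}{p}{m}(\alpha,\beta;\wz)$ and the residue term of \eqref{eq:comm-poly}, being residue combinations of these units, are $\equiv 0$. For the middle term, the unit appearing in $\pmul{T}{n}{m+n-p}{m}(\beta,\alpha;\wz)$ carries the index $\rbk{m+n-p}{n}\neq\ws^{\vee}$, so that polynomial lies in $\zhuOit{T}{\ws^{\vee}}{n}{m}(\beta,\alpha;\wz)$ by \eqref{eq:unit-qs}; applying $\varphi_{N,\gamma}$ and using \eqref{eq:phi-ab} sends it into $\zhuOit{T}{\ws}{n}{m}(\alpha,\beta;\wz)$.

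The substantive case is $\ws=\rbk{p}{n}$, where $\ws^{\vee}=\rbk{m+n-p}{n}$. Here I would use \eqref{eq:unit-qs} to replace $\unitmu{T}{\ws}{n}{m}(\alpha,\beta,j;\wz)$ by $\wz^j$ inside both $\pmul{T}{n}{p}{m}(\alpha,\beta;\wz)$ and the residue term, and to replace $\unitmu{T}{\ws^{\vee}}{n}{m}(\beta,\alpha,j;\wz)$ by $\wz^j$ inside $\pmul{T}{n}{m+n-p}{m}(\beta,\alpha;\wz)$ modulo $\zhuOit{T}{\ws^{\vee}}{n}{m}(\beta,\alpha;\wz)$, and then push $\varphi_{N,\gamma}$ through the latter by the transformation rule \eqref{eq:inv-x}; by \eqref{eq:phi-ab} the result is again a congruence modulo $\zhuOit{T}{\ws}{n}{m}(\alpha,\beta;\wz)$. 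Writing $Q=\alpha-1+l_1+\delta(\ws\leq i_1)+\ws/T$ and $C=-l_1-l_3+l_2-\delta(\ws\leq i_1)-\delta(T\leq \ws+i_3)$, a short computation with $\rbk{p}{n}=\ws$ gives $\alpha-1+p-n=Q+C$, so factoring $(1+\wx)^{Q+C}=(1+\wx)^{Q}(1+\wx)^{C}$ turns the residue term into a combination of residues $\Res_{\wx}((1+\wx)^{Q}\wx^{e}\sum_{j\leq N}\wz^j\wx^{-j-1})$; the term $\pmul{T}{n}{p}{m}(\alpha,\beta;\wz)$ is visibly of this shape as well. For the $\varphi$-image I would first use \eqref{eq:ssvee} and \eqref{eq:svee^2-s^2} to rewrite the exponents attached to $\ws^{\vee}$ in terms of those attached to $\ws$, and then \eqref{eq:inv-x}, which preserves the $\wx$-exponent $e$ and sends the $(1+\wx)$-exponent $k$ to $\gamma-k-e$ with a sign $(-1)^{e+1}$, followed by a further binomial expansion of $(1+\wx)^{\cdot}$, to bring it to the same common shape with $(1+\wx)$-exponent $Q$.

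The main obstacle is the ensuing bookkeeping. After the reductions all three terms are explicit $\C$-linear combinations of the residues $\Res_{\wx}((1+\wx)^{Q}\wx^{e}\sum_{j\leq N}\wz^j\wx^{-j-1})$, each of which equals $\fo{T}{\ws}{n}{m}(\alpha,\beta,e-q;\wz)$ with $q=-l_1-l_3-\delta(\ws\leq i_1)-\delta(T\leq \ws+i_3)-1$, and one must verify that the total combination lies in $\zhuOit{T}{\ws}{n}{m}(\alpha,\beta;\wz)$. Reducing each such residue to the normal form of Lemma \ref{lemma:modulo} via \eqref{eq:zimodbinom}, this collapses to a single Chu--Vandermonde identity of the type already exploited in the proof of \eqref{eq:inv-x}; the genuine work is to track the four shifts $\delta(\ws\leq i_1)$, $\delta(T\leq \ws+i_3)$, $\delta(\ws^{\vee}\leq i_1)$, $\delta(T\leq \ws^{\vee}+i_3)$ through the sign $(-1)^{e+1}$ of \eqref{eq:inv-x} and to reconcile them by means of \eqref{eq:svee^2-s^2}. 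This final identity is of the same nature as the one in \cite[Lemma 4.7]{DJ1}, on which I would model the computation.
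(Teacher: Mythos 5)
Your proposal is correct and follows essentially the same route as the paper's own proof: the paper likewise uses \eqref{eq:unit-qs} and \eqref{eq:phi-ab} to show that all three terms lie in $\bigcap_{\ws\neq\rbk{p}{n}}\zhuOit{T}{\ws}{n}{m}(\alpha,\beta;\wz)$, reduces via Lemma \ref{lemma:iso-oplus} to the single index $\ws=\rbk{p}{n}$, and there carries out the residue computation using \eqref{eq:inv-x}, \eqref{eq:ssvee}, \eqref{eq:svee^2-s^2} and the binomial identity argument of \cite[Lemma 3.4]{DJ2}. The only cosmetic differences are that you replace the unit polynomials by powers of $\wz$ before applying $\varphi_{\alpha+\beta-1-\lw,\alpha+\beta+m-n-2}$ rather than after, and you model the closing identity on \cite[Lemma 4.7]{DJ1} where the paper invokes \cite[Lemma 3.4]{DJ2}.
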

\begin{proof}
The proof is similar to that of \cite[Lemma 3.4]{DJ2}.
We simply write $\wr=\rbk{p}{n}$ and $\varphi=\varphi_{\alpha+\beta-1-\lw,\alpha+\beta+m-n-2}$.
It follows by
\begin{align*}
(m+n-p)-n\equiv \frac{i_1-i_2}{T}\equiv \frac{\wr^{\vee}}{T} \pmod{\Z}
\end{align*}
that
$\pmul{T}{\nthr}{\none+\nthr-\ntwo}{\ntwo}(\beta,\alpha;\wz)
\in \cap_{\ws\neq \wr^{\vee}}\zhuOit{T}{\ws}{n}{m}(\beta,\alpha ; \wz)$,
where $\wr^{\vee}$ is defined in \eqref{eq:def-vee}.
Since 
$\varphi(\pmul{T}{\nthr}{\none+\nthr-\ntwo}{\ntwo}(\beta,\alpha;\wz))\in \cap_{\ws\neq \wr}\zhuOit{T}{\ws}{n}{m}(\alpha,\beta ; \wz)$
by \eqref{eq:phi-ab}, we have
\begin{align*}
&\pmul{T}{\nthr}{\ntwo}{\none}(\alpha,\beta;\wz)-\varphi(
\pmul{T}{\nthr}{\none+\nthr-\ntwo}{\none}(\beta,\alpha;\wz))\\
&\quad{}-\Res_{\wx}(1+\wx)^{\alpha-1+p-n}
\sum_{j\in\Z}\unitmu{T}{\wr}{\nthr}{\none}(\alpha,\beta,j;\wz)\wx^{-j-1}
\in \bigcap_{\ws\neq \wr}\zhuOit{T}{\ws}{n}{m}(\alpha,\beta ; \wz).
\end{align*}
Thus,
it is sufficient to show 
\eqref{eq:comm-poly} modulo $\zhuOit{T}{\wr}{\nthr}{\none}(\alpha,\beta ; \wz)$
by Lemma \ref{lemma:iso-oplus}.
Define
\begin{align}
\label{eq:ab-ba-epsilon}
\varepsilon&=\left\{\begin{array}{ll}
1&\mbox{if }T\leq i_1+i_3-i_2,\\
0&\mbox{if }0\leq i_1+i_3-i_2<T,\\
-1&\mbox{if }i_1+i_3-i_2<0.
\end{array}\right.
\end{align}
It follows by the formula of $\varepsilon$ in the proof of \cite[Lemma 3.4]{DJ2}
and \eqref{eq:svee^2-s^2} that
\begin{align*}
&\pmul{T}{\nthr}{\none+\nthr-\ntwo}{\none}(\beta,\alpha;\wz)\\
&=\sum_{i=0}^{l_1+l_3-l_2+\varepsilon}\binom{-l_1-l_3+(l_1+l_3-l_2+\varepsilon)-\delta(\wr^{\vee}\leq i_1)-\delta(T\leq \wr^{\vee}+i_3)}{i}\\
&\qquad{}\times\Res_{\wx}
(1+\wx)^{\beta-1+l_1+\delta(\wr^{\vee}\leq i_1)+\wr^{\vee}/T}\\
&\qquad{}\times \wx^{-l_1-l_3+(l_1+l_3-l_2+\varepsilon)
-\delta(\wr^{\vee}\leq i_1)-\delta(T\leq \wr^{\vee}+i_3)-i}
\sum_{j\in\Z}\unitmu{T}{\wr^{\vee}}{\nthr}{\none}(\beta,\alpha,j;\wz)\wx^{-j-1}\\
&=\sum_{i=0}^{l_1+l_3-l_2+\varepsilon}\binom{-l_2-1}{i}\Res_{\wx}
(1+\wx)^{\beta-1+l_1+\delta(\wr^{\vee}\leq i_1)+\wr^{\vee}/T}\wx^{-l_2-1-i}\\
&\qquad{}\times
\sum_{j\in\Z}\unitmu{T}{\wr^{\vee}}{\nthr}{\none}(\beta,\alpha,j;\wz)\wx^{-j-1}.
\end{align*}
Thus, it follows by \eqref{eq:inv-x} that
\begin{align}
\label{eq:ab-ba}
&
\varphi(\pmul{T}{\nthr}{\none+\nthr-\ntwo}{\none}(\beta,\alpha;\wz))\nonumber\\
&=\sum_{i=0}^{l_1+l_3-l_2+\varepsilon}\binom{-l_2-1}{i}(-1)^{-l_2-i}\Res_{\wx}
(1+\wx)^{\alpha-1+p-n+i}\wx^{-l_2-1-i}\nonumber\\
&\qquad{}\times
\sum_{j\in\Z}\unitmu{T}{\wr}{\nthr}{\none}(\alpha,\beta,j;\wz)\wx^{-j-1}
\end{align}
and therefore
\begin{align*}
&
\varphi(\pmul{T}{\nthr}{\none+\nthr-\ntwo}{\none}(\beta,\alpha;\wz))\\
&\equiv\sum_{i=0}^{l_1+l_3-l_2+\varepsilon}\binom{-l_2-1}{i}
(-1)^{-l_2-i}
\Res_{\wx}
(1+\wx)^{\alpha-1+p-n+i}\wx^{-l_2-1-i}\\
&\quad{}\times\sum_{\begin{subarray}{c}j\in\Z\\j\leq \alpha+\beta-1-\lw
\end{subarray}}
\wz^{j}\wx^{-j-1}
\qquad\pmod{\zhuOit{T}{\wr}{\nthr}{\none}(\alpha,\beta ; \wz)}.
\end{align*}

The same argument as in the proof of \cite[Lemma 3.4]{DJ2} shows
\begin{align*}
&\pmul{T}{\nthr}{\ntwo}{\none}(\alpha,\beta;\wz)-\varphi(
\pmul{T}{\nthr}{\none+\nthr-\ntwo}{\none}(\beta,\alpha;\wz))\\
&\equiv
\sum_{i=0}^{l_2}
\binom{-l_1-l_3+l_2-\varepsilon-1}{i}\\
&\qquad{}\times
\Res_{\wx}\big(
(1+\wx)^{\alpha-1+l_1+\delta(r\leq i_1)+r/T}
\wx^{-l_1-l_3+l_2-\varepsilon-1-i}
\sum_{\begin{subarray}{c}j\in\Z\\j\leq \alpha+\beta-1-\lw\end{subarray}}
\wz^j\wx^{-j-1}\big)\\
&\quad{}-\sum_{i=0}^{l_1+l_3-l_2+\varepsilon}\binom{-l_2-1}{i}(-1)^{-l_2-i}\Res_{\wx}
(1+\wx)^{\alpha-1+p-n+i}\wx^{-l_2-1-i}\sum_{\begin{subarray}{c}j\in\Z\\j\leq \alpha+\beta-1-\lw\end{subarray}}
\wz^j\wx^{-j-1}\\
&\qquad\pmod{\zhuOit{T}{\wr}{\nthr}{\none}(\alpha,\beta ; \wz)}\\
&=\Res_{\wx}(1+\wx)^{\alpha-1+p-n}\sum_{\begin{subarray}{c}j\in\Z\\j\leq \alpha+\beta-1-\lw\end{subarray}}
\wz^j\wx^{-j-1}.
\end{align*}
The proof is complete.
\end{proof}

Let $l\in(1/T)\N$ with $l\leq n,m$.
Then, it follows by Lemma \ref{lemma:descend} that
\begin{align*}
\unitmu{T}{\wr}{n}{m}(\alpha,\beta,i;\wz)
\equiv
\unitmu{T}{\wr}{n-l}{m-l}(\alpha,\beta,i;\wz)
\pmod{\zhuOi{T}{n-l}{m-l}(\alpha,\beta;\wz)}
\end{align*}
for $\alpha,\beta,i\in\Z$.
The same computation as in the proof of \cite[Proposition 4.3]{DJ2} shows the following lemma.
\begin{lemma}
\label{lemma:multi-descend}
Let $l\in(1/T)\N$ with $l\leq n,m$.
Then
\begin{align*}
\pmul{T}{\nthr}{\ntwo}{\none}(\alpha,\beta;\wz)\equiv
\pmul{T}{\nthr-l}{\ntwo-l}{\none-l}(\alpha,\beta;\wz)\pmod{\zhuOi{T}{n-l}{m-l}(\alpha,\beta;\wz)}
\end{align*}
for $\alpha,\beta\in\Z$.
\end{lemma}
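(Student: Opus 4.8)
The plan is to follow the proof of \cite[Proposition 4.3]{DJ2}, the extra structural input being the decomposition of $\zhuOi{T}{\nthr-l}{\none-l}(\alpha,\beta;\wz)$ into the pieces $\zhuOit{T}{\ws}{\nthr-l}{\none-l}(\alpha,\beta;\wz)$. Write $\wr=\rbk{\ntwo}{\nthr}$ and note that $\ntwo-\nthr=(\ntwo-l)-(\nthr-l)$, so $\rbk{\ntwo-l}{\nthr-l}=\wr$ as well. By their defining formula \eqref{eq:def-mul-x}, both $\pmul{T}{\nthr}{\ntwo}{\none}(\alpha,\beta;\wz)$ and $\pmul{T}{\nthr-l}{\ntwo-l}{\none-l}(\alpha,\beta;\wz)$ are $\Res_{\wx}$-combinations of the polynomials $\unitmu{T}{\wr}{\cdot}{\cdot}(\alpha,\beta,j;\wz)$ carrying the single residue index $\wr$; hence by \eqref{eq:unit-qs} each lies in $\bigcap_{\ws\neq\wr}\zhuOit{T}{\ws}{\nthr-l}{\none-l}(\alpha,\beta;\wz)$, where for the first polynomial one uses in addition Lemma \ref{lemma:descend}, which gives $\zhuOit{T}{\ws}{\nthr}{\none}(\alpha,\beta;\wz)\subset\zhuOit{T}{\ws}{\nthr-l}{\none-l}(\alpha,\beta;\wz)$ because $\nthr-l\leq\nthr$ and $\none-l\leq\none$. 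Consequently their difference lies in $\bigcap_{\ws\neq\wr}\zhuOit{T}{\ws}{\nthr-l}{\none-l}(\alpha,\beta;\wz)$, and by Lemma \ref{lemma:iso-oplus} it suffices to prove the asserted congruence modulo the remaining piece $\zhuOit{T}{\wr}{\nthr-l}{\none-l}(\alpha,\beta;\wz)$.

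Next I would descend the $E$-polynomials from level $(\nthr,\none)$ to level $(\nthr-l,\none-l)$. The congruence for $\unitmu{T}{\wr}{\nthr}{\none}$ recorded just before the statement gives $\unitmu{T}{\wr}{\nthr}{\none}(\alpha,\beta,j;\wz)\equiv\unitmu{T}{\wr}{\nthr-l}{\none-l}(\alpha,\beta,j;\wz)$ modulo $\zhuOi{T}{\nthr-l}{\none-l}(\alpha,\beta;\wz)$, hence in particular modulo $\zhuOit{T}{\wr}{\nthr-l}{\none-l}(\alpha,\beta;\wz)$. Substituting this into \eqref{eq:def-mul-x}, the polynomial $\pmul{T}{\nthr}{\ntwo}{\none}(\alpha,\beta;\wz)$ becomes, modulo $\zhuOit{T}{\wr}{\nthr-l}{\none-l}(\alpha,\beta;\wz)$, a $\Res_{\wx}$-expression built from the same $\unitmu{T}{\wr}{\nthr-l}{\none-l}(\alpha,\beta,j;\wz)$, but carrying the $(1+\wx)$- and $\wx$-exponents and the binomial coefficients dictated by the integer/fractional decomposition of $\none$ and $\nthr$ rather than of $\none-l$ and $\nthr-l$.

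Finally I would match the two $\Res_{\wx}$-expressions. Writing $l=l_0+i_0/T$, the integer parts and the corrections $\delta(\wr\leq i_1),\delta(T\leq\wr+i_3)$ change in a controlled way under $\none\mapsto\none-l$ and $\nthr\mapsto\nthr-l$, with a single \lq\lq borrow\rq\rq\ when $i_1<i_0$ or $i_3<i_0$; the identities \eqref{eq:ssvee} and \eqref{eq:svee^2-s^2} are exactly what keeps the exponent $\alpha-1+l_1+\delta(\wr\leq i_1)+\wr/T$ of $(1+\wx)$ and the $\wx$-exponent aligned across the two levels. One then shifts the $(1+\wx)$-power from the $(\nthr,\none)$-value to the $(\nthr-l,\none-l)$-value by the binomial expansion of $(1+\wx)$, as in \cite[Proof of Proposition 4.3]{DJ2}, absorbing the discrepancy into the summation range; the terms falling outside the admissible range lie in $\zhuOit{T}{\wr}{\nthr-l}{\none-l}(\alpha,\beta;\wz)$ by \eqref{eq:res-x-q} and Lemma \ref{lemma:modulo}, and the surviving terms reproduce $\pmul{T}{\nthr-l}{\ntwo-l}{\none-l}(\alpha,\beta;\wz)$. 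The main obstacle is precisely this last bookkeeping: tracking the floor adjustments and the two $\delta$-corrections simultaneously and confirming that the leftover $\Res_{\wx}$-terms land in the correct $O$-space. This is the step where the $T$-grading genuinely complicates the integer-indexed computation of \cite{DJ2}, although \eqref{eq:ssvee}--\eqref{eq:svee^2-s^2} package most of the discrepancy.
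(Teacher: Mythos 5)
Your proposal is correct and follows essentially the same route as the paper: the paper's proof likewise consists of recording the descent congruence for the $E$-polynomials (deduced from Lemma \ref{lemma:descend}) and then invoking the same computation as in \cite[Proposition 4.3]{DJ2}, while your preliminary reduction to the single component $\zhuOit{T}{\wr}{\nthr-l}{\none-l}(\alpha,\beta;\wz)$ via \eqref{eq:unit-qs} and Lemma \ref{lemma:iso-oplus} is sound and mirrors the technique the paper uses in the proof of Lemma \ref{lemma:comm-x}. One small correction to your description of the final bookkeeping: the exponent alignment under $\none\mapsto\none-l$, $\nthr\mapsto\nthr-l$ is controlled by the non-negative integers $\rho_1,\rho_3$ as in the proof of Lemma \ref{lemma:descend}, not by \eqref{eq:ssvee}--\eqref{eq:svee^2-s^2}, which concern the $\ws\leftrightarrow\ws^{\vee}$ duality entering the commutativity statement (Lemma \ref{lemma:comm-x}) rather than descent.
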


Let $T^{\prime}$ be a positive multiple of $T$ and $\alpha,\beta\in\Z$.
Set $d=T^{\prime}/T$.
We note that
$
m=l_1+di_1/T^{\prime},
p=l_2+di_2/T^{\prime}$ and 
$n=l_3+di_3/T^{\prime}$.
Thus it follows by \eqref{eq:def-otnms}
that
\begin{align}
\label{eq:Tprime-T}
\zhuOit{T^{\prime}}{dr}{n}{m}(\alpha,\beta;z)&=
\zhuOit{T}{r}{n}{m}(\alpha,\beta;z)
\end{align}
for  $r=0,\ldots,T-1$.
By this and \eqref{eq:unit-qs}, we have
\begin{align*}
\unitmu{T^{\prime}}{dr}{n}{m}(\alpha,\beta,i;\wz)&\equiv
\delta_{r,s}z^i
\pmod{\zhuOit{T}{s}{n}{m}(\alpha,\beta,i;\wz)}
\end{align*}
for $i\in\Z$ and $r,s=0,\ldots,T-1$. Therefore, Lemma \ref{lemma:iso-oplus}
implies
\begin{align*}
\unitmu{T^{\prime}}{dr}{n}{m}(\alpha,\beta,i;\wz)&\equiv
\unitmu{T}{r}{n}{m}(\alpha,\beta,i;\wz)\pmod{\zhuOi{T}{n}{m}(\alpha,\beta;z)}
\end{align*}
for $i\in\Z$ and $r=0,\ldots,T-1$. 
By \eqref{eq:def-mul-x}, 
we have the following result.
\begin{lemma}\label{lemma:mul-Tprime-T}
Let $T^{\prime}$ be a positive multiple of $T$ and $\alpha,\beta\in\Z$.
Then
\begin{align*}
\pmul{T^{\prime}}{\nthr}{\ntwo}{\none}(\alpha,\beta;\wz)\equiv
\pmul{T}{\nthr}{\ntwo}{\none}(\alpha,\beta;\wz)\pmod{\zhuOi{T}{n}{m}(\alpha,\beta;z)}.
\end{align*}
\end{lemma}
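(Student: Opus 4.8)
The plan is to write out the defining formula \eqref{eq:def-mul-x} for both $\pmul{T^{\prime}}{n}{p}{m}(\alpha,\beta;\wz)$ and $\pmul{T}{n}{p}{m}(\alpha,\beta;\wz)$ and to check that, after matching all the numerical data, the only discrepancy sits inside the $E$-polynomials, where the needed congruence has already been obtained in the paragraph preceding the lemma. Concretely, the result will follow from the congruence $\unitmu{T^{\prime}}{dr}{n}{m}(\alpha,\beta,j;\wz)\equiv\unitmu{T}{r}{n}{m}(\alpha,\beta,j;\wz)\pmod{\zhuOi{T}{n}{m}(\alpha,\beta;\wz)}$ together with the fact that $\Phi$ is produced from the $E$'s by a single $\C$-linear operation.

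First I would identify the index used by the $T^{\prime}$-version. In \eqref{eq:def-mul-x} the integer $\rbk{p}{n}$ is fixed by $0\le\rbk{p}{n}\le T-1$ and $p-n\equiv\rbk{p}{n}/T\pmod{\Z}$, so $\rbk{p}{n}$ is the residue of $i_2-i_3$ modulo $T$. The corresponding $T^{\prime}$-index, namely the integer $r^{\prime}$ with $0\le r^{\prime}\le T^{\prime}-1$ and $p-n\equiv r^{\prime}/T^{\prime}\pmod{\Z}$, then equals $d\,\rbk{p}{n}$, because $p-n\equiv(i_2-i_3)/T=d(i_2-i_3)/T^{\prime}\pmod{\Z}$ and $0\le d\,\rbk{p}{n}\le T^{\prime}-1$. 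This is the point where the suppressed $T$-dependence of the notation $\rbk{p}{n}$ must be unwound.

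Next I would check that every scalar datum in \eqref{eq:def-mul-x} is unchanged when the pair $(T,\rbk{p}{n})$ is replaced by $(T^{\prime},d\,\rbk{p}{n})$. Since $d>0$ one has $\delta(d\,\rbk{p}{n}\le di_1)=\delta(\rbk{p}{n}\le i_1)$ and $\delta(T^{\prime}\le d\,\rbk{p}{n}+di_3)=\delta(dT\le d(\rbk{p}{n}+i_3))=\delta(T\le\rbk{p}{n}+i_3)$, while $d\,\rbk{p}{n}/T^{\prime}=\rbk{p}{n}/T$ and $m=l_1+di_1/T^{\prime},\,n=l_3+di_3/T^{\prime}$. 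Hence the binomial coefficients, the exponent of $(1+\wx)$, the exponent of $\wx$, and the summation range over $i$ all agree between the two defining formulas.

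With all numerical data matched, the sole difference between $\pmul{T^{\prime}}{n}{p}{m}(\alpha,\beta;\wz)$ and $\pmul{T}{n}{p}{m}(\alpha,\beta;\wz)$ is the replacement of $\unitmu{T^{\prime}}{d\,\rbk{p}{n}}{n}{m}(\alpha,\beta,j;\wz)$ by $\unitmu{T}{\rbk{p}{n}}{n}{m}(\alpha,\beta,j;\wz)$ in the generating series $\sum_{j\in\Z}\unitmu{T}{\rbk{p}{n}}{n}{m}(\alpha,\beta,j;\wz)\wx^{-j-1}$. By the congruence displayed just before the lemma (which rests on \eqref{eq:Tprime-T}, \eqref{eq:unit-qs} and Lemma \ref{lemma:iso-oplus}), each coefficient of this series changes by an element of $\zhuOi{T}{n}{m}(\alpha,\beta;\wz)$; and because $\Res_{\wx}$ against a fixed Laurent series is $\C$-linear and $\zhuOi{T}{n}{m}(\alpha,\beta;\wz)$ is a subspace, the difference of the two $\Phi$'s again lies in $\zhuOi{T}{n}{m}(\alpha,\beta;\wz)$, which is the assertion. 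I anticipate no genuine obstacle: the whole argument is the bookkeeping confirming $r^{\prime}=d\,\rbk{p}{n}$ and the $d$-invariance of the $\delta$-symbols and fractional exponents, the conceptual input being the previously established $E$-congruence.
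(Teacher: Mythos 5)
Your proposal is correct and follows essentially the same route as the paper: the paper derives \eqref{eq:Tprime-T} and the congruence $\unitmu{T^{\prime}}{dr}{n}{m}(\alpha,\beta,i;\wz)\equiv\unitmu{T}{r}{n}{m}(\alpha,\beta,i;\wz)\pmod{\zhuOi{T}{n}{m}(\alpha,\beta;z)}$ in the paragraph preceding the lemma and then concludes ``by \eqref{eq:def-mul-x}.'' Your write-up simply makes explicit the bookkeeping the paper leaves implicit --- that the $T^{\prime}$-index is $d\,\rbk{p}{n}$ and that the $\delta$-symbols, fractional exponents, and binomial data are invariant under $(T,\rbk{p}{n})\mapsto(T^{\prime},d\,\rbk{p}{n})$ --- before applying the $E$-congruence and linearity.
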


\section{\label{section:zhu}
Associative algebras $\zhualg{T}{m}(V)$ and bimodules $\zhumod{T}{n}{m}(V)$}

Throughout the rest of this paper, 
we always assume the following properties for a vertex algebra $V$:
$V$ has a grading $V=\oplus_{i=\lw}^{\infty}V_i$ such that $\lw\in\Z_{\leq 0}$,
${\bf 1}\in V_0$ and for any homogeneous element $a\in V$,
$a_{i}V_j\subset V_{\wt a-1-i+j}$,
where $V_i=0$ for $i<\Delta$. 
Every vertex operator algebra satisfies these properties.
Throughout this section, we fix $m=l_1+i_1/T,p=l_2+i_2/T,n=l_3+i_3/T\in(1/T)\N$ with
$l_1,l_2,l_3\in\N$ and $0\leq i_1,i_2,i_3\leq T-1$.

In this section, 
we first define a product $*^{T}_{n,p,m}$ on $V$
and a quotient space $\zhumod{T}{n}{m}(V)$ of $V$.
In the following,
we shall use a similar argument as in \cite[Section 3]{DJ1}.
For $a\in V_i$, we denote $i$ by $\wt a$.
Define 
\begin{align}
\label{eq:def-unit-ab}
\hunitmu{T}{\ws}{\nthr}{\none}(a,b,i)=\unitmu{T}{\ws}{\nthr}{\none}(\wt a,\wt b,i;\wz)|_{\wz^j=a_jb}\in V
\end{align}
for homogeneous elements $a,b$ of $V$ and $i\in\Z$, 
where $\unitmu{T}{\ws}{\nthr}{\none}(\wt a,\wt b,i;\wz)$ is defined in \eqref{eq:unit-s}, 
and extend $\hunitmu{T}{\ws}{\nthr}{\none}(a,b,i)$ for arbitrary $a,b\in V$
by linearity.

Let 
$\zhuOzero{T}{n}{m}(V)$
be the subspace of $V$ spanned by
\begin{align}
\label{eq:span-a-1}
\{a_{-2}{\bf 1}+(\wt a+m-n)a\in V\ |\ \mbox{homogeneous }a\in V\}
\end{align}
and $\zhuOi{T}{n}{m}(V)$ the subspace of $V$ spanned by
\begin{align}
\label{eq:span-a-b}
&\Big\{P(\wz)|_{\wz^j=a_jb}\in V\ \Big|\ 
\begin{array}{l}
\mbox{homogeneous $a,b\in V$ and}\\
P(\wz)\in \zhuOi{T}{n}{m}(\wt a,\wt b ; \wz)
\end{array}\Big\}.
\end{align}

A similar argument as in the proof of \cite[Lemma 2.1.3]{Z} shows the following lemma as stated in 
the proof of \cite[Lemma 2.3]{DJ1}.
\begin{lemma}\label{lemma:inv-ab}
For homogeneous $a,b\in V$, we have
\begin{align*}
&\Res_{\wx}(1+\wx)^{i}\wx^{j}Y(b,\wx)a\\
&\equiv
(-1)^{j+1}\Res_{\wx}(1+\wx)^{\wt a+\wt b+m-n-2-i-j}
\wx^{j}Y(a,\wx)b
\pmod{\zhuOzero{T}{n}{m}(V)}.
\end{align*}
for $i\in\Q,j\in\Z$ and homogeneous $a,b\in V$.
\end{lemma}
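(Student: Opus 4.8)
The plan is to deduce the congruence from vertex-algebra skew-symmetry together with a single computation of how the translation operator behaves modulo $\zhuOzero{T}{n}{m}(V)$. Write $\mathcal{D}$ for the operator $v\mapsto v_{-2}{\bf 1}$ on $V$; then skew-symmetry gives $Y(b,\wx)a=e^{\wx\mathcal{D}}Y(a,-\wx)b$ and the creation property gives $e^{\wx\mathcal{D}}c=Y(c,\wx){\bf 1}=\sum_{k\geq 0}(c_{-k-1}{\bf 1})\wx^{k}$ (cf.\ \cite{LL}). Because $a$ and $b$ are homogeneous, each coefficient $a_{N}b$ of $Y(a,-\wx)b$ is homogeneous of weight $\wt a+\wt b-1-N$, so the whole problem reduces to understanding $e^{\wx\mathcal{D}}c$ modulo $\zhuOzero{T}{n}{m}(V)$ for homogeneous $c$.

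The key step is the congruence $e^{\wx\mathcal{D}}c\equiv(1+\wx)^{-(\wt c+m-n)}c\pmod{\zhuOzero{T}{n}{m}(V)}$ for homogeneous $c$. I would prove this by establishing $\mathcal{D}^{k}c\equiv(-1)^{k}\big(\prod_{l=0}^{k-1}(\wt c+m-n+l)\big)c$ by induction on $k$. Since $\mathcal{D}$ preserves homogeneity and raises weight by $1$, the element $\mathcal{D}^{k-1}c$ is homogeneous of weight $\wt c+k-1$, and the defining relation \eqref{eq:span-a-1} applied to it gives $\mathcal{D}^{k}c=(\mathcal{D}^{k-1}c)_{-2}{\bf 1}\equiv -(\wt c+k-1+m-n)\,\mathcal{D}^{k-1}c$; multiplying the inductive hypothesis by this scalar closes the induction. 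Summing $\tfrac{\wx^{k}}{k!}\mathcal{D}^{k}c$ and using $\tfrac{(-1)^{k}}{k!}\prod_{l=0}^{k-1}(\wt c+m-n+l)=\binom{-(\wt c+m-n)}{k}$ recovers the binomial expansion of $(1+\wx)^{-(\wt c+m-n)}$. Feeding this back coefficientwise into $Y(a,-\wx)b=\sum_{N}(-1)^{N+1}(a_{N}b)\wx^{-N-1}$ and collecting the $(1+\wx)$-powers, I expect the clean skew-symmetry-modulo-$\zhuOzero{T}{n}{m}(V)$ identity
\[
Y(b,\wx)a\equiv(1+\wx)^{-\wt a-\wt b-m+n}\,Y\!\left(a,\frac{-\wx}{1+\wx}\right)b\pmod{\zhuOzero{T}{n}{m}(V)}.
\]

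Substituting this into $\Res_{\wx}(1+\wx)^{i}\wx^{j}Y(b,\wx)a$ and changing the residue variable through the involutive substitution $\wy=-\wx/(1+\wx)$, equivalently $\wx=-\wy/(1+\wy)$ and $1+\wx=(1+\wy)^{-1}$, should turn the left-hand side into $(-1)^{j+1}\Res_{\wy}(1+\wy)^{\wt a+\wt b+m-n-2-i-j}\wy^{j}Y(a,\wy)b$, which is the asserted right-hand side. The substitution is legitimate because $Y(a,\wy)b\in V\db{\wy}$ and $-\wx/(1+\wx)$ has $\wx$-order $1$ with invertible leading coefficient, so the formal residue change-of-variable theorem applies; the computation is valid for $i\in\Q$ since $(1+\wx)^{i}$ is read as its binomial expansion. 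Alternatively, one can bypass the substitution by expanding both sides, evaluating each residue as a single binomial coefficient, and checking equality through the upper-negation identity $\binom{r}{p}=(-1)^{p}\binom{p-r-1}{p}$. The main obstacle is precisely this last bookkeeping: retaining the Jacobian factor $(1+\wy)^{-2}$ so that the exponent of $(1+\wy)$ and the power of $\wy$ come out correctly, and confirming that the formal substitution commutes with the coefficientwise reduction modulo $\zhuOzero{T}{n}{m}(V)$. This is the analogue of the verification in \cite[Lemma 2.1.3]{Z} and \cite[Lemma 2.3]{DJ1}.
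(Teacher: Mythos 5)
Your proposal is correct and follows essentially the same route as the paper, whose proof is given by reference to \cite[Lemma 2.1.3]{Z} and \cite[Lemma 2.3]{DJ1}: that cited argument is precisely your combination of skew-symmetry $Y(b,\wx)a=e^{\wx\mathcal{D}}Y(a,-\wx)b$, the congruence $\mathcal{D}c\equiv-(\wt c+m-n)c$ giving $e^{\wx\mathcal{D}}c\equiv(1+\wx)^{-(\wt c+m-n)}c$ modulo $\zhuOzero{T}{n}{m}(V)$, and the formal residue substitution $\wx\mapsto-\wy/(1+\wy)$. Your adaptation correctly replaces Zhu's $L(-1)$ by the translation operator $a\mapsto a_{-2}{\bf 1}$, which is exactly the adjustment the paper's setting (a vertex algebra with the grading shift $m-n$) requires.
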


By \eqref{eq:sum-e-s}, we have
\begin{align}
\label{eq:sum-unit-s}
\sum_{\ws=0}^{T-1}\hunitmu{T}{\ws}{\nthr}{\none}(a,b,i)&\equiv a_{i}b
\pmod{\zhuOi{T}{n}{m}(V)}
\end{align}
for $i\in\Z$.
Define 
\begin{align}
\label{eq:multi-ab}
a*^{T}_{\nthr,\ntwo,\none}b&=\pmul{T}{\nthr}{\ntwo}{\none}(\wt a,\wt b;\wz)|_{\wz^j=a_jb}\in V
\end{align}
for homogeneous $a,b\in V$,
where $\pmul{T}{\nthr}{\ntwo}{\none}$ is defined in \eqref{eq:def-mul-x}, and
extend $a*^{T}_{\nthr,\ntwo,\none}b$ for arbitrary $a,b\in V$ by linearity.
By $Y({\bf 1},\wx)=\id_{V}$ and Lemma \ref{lemma:1-poly}, we have
\begin{align}
\label{eq:identity}
{\bf 1}*^{T}_{n,p,m}a&=\delta_{n,p}a
\end{align}
for $a\in V$.

\begin{definition}\label{definition:O(V)}
Let 
$\zhuOii{T}{\nthr}{\none}(V)$ be 
the subspace of $V$ spanned by
\begin{align*}
u*^{T}_{n,p_3,m}((a*^{T}_{p_3,p_2,p_1}b)*^{T}_{p_3,p_1,m}c-a*^{T}_{p_3,p_2,m}(b*^{T}_{p_2,p_1,m}c))
\end{align*}
for all $a,b,c,u\in V$ and all $p_1,p_2,p_3\in (1/T)\N$.
Define
\begin{align*}
&\zhuOiii{T}{n}{m}(V)\\
&=
\sum_{p_1,p_2\in(1/T)\N}(V*^{T}_{n,p_2,p_1}(\zhuOzero{T}{p_2}{p_1}(V)+
\zhuOi{T}{p_2}{p_1}(V))*^{T}_{n,p_1,m}V
\end{align*}
and 
\begin{align*}
\zhuO{T}{n}{m}(V)&=
\zhuOzero{T}{n}{m}(V)+
\zhuOi{T}{n}{m}(V)+
\zhuOii{T}{n}{m}(V)+
\zhuOiii{T}{n}{m}(V).
\end{align*}
\end{definition}

By \eqref{eq:identity}, we have
\begin{align*}
(a*^{T}_{n,p_2,p_1}b)*^{T}_{n,p_1,m}c-a*^{T}_{n,p_2,m}(b*^{T}_{p_2,p_1,m}c)\in 
\zhuOii{T}{\nthr}{\none}(V)
\end{align*}
for $a,b,c\in V$ and $p_1,p_2\in (1/T)\N$.

\begin{lemma}\label{lemma:ab-ba-V}
For $a,b\in V$, we have
\begin{align*}
&a*_{n,p,m}^{T}b-b*_{n,m+n-p,m}^{T}a\\
&\quad{}-\Res_{\wx}(1+\wx)^{\wt a-1+p-n}
\sum_{\begin{subarray}{c}j\in\Z
\end{subarray}}
\hunitmu{T}{\rbk{p}{n}}{\nthr}{\none}(a,b,j)\wx^{-j-1}
\\
&\in \zhuOzero{T}{n}{m}(V)+\zhuOi{T}{n}{m}(V),
\end{align*}
where $\rbk{p}{n}$ is defined in \eqref{eq:wrij}.
\end{lemma}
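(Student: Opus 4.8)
The plan is to deduce this congruence on $V$ from its polynomial prototype, Lemma~\ref{lemma:comm-x}, by specializing $\alpha=\wt a$, $\beta=\wt b$ and applying the substitution $\wz^i\mapsto a_i b$. Since all three terms are bilinear in $(a,b)$ and the target $\zhuOzero{T}{n}{m}(V)+\zhuOi{T}{n}{m}(V)$ is a fixed subspace, it suffices to treat homogeneous $a,b$; I write $N=\wt a+\wt b-1-\lw$ and $\gamma=\wt a+\wt b+m-n-2$, the exact parameters of the $\varphi$ occurring in Lemma~\ref{lemma:comm-x}. The basic bound I would use throughout is that $a_i b\in V_{\wt a-1-i+\wt b}$ together with $V_k=0$ for $k<\lw$ forces $a_i b=0=b_i a$ for $i>N$; thus the substitution $\wz^i\mapsto a_i b$ only sees monomials of degree $\le N$, precisely where $\varphi_{N,\gamma}$ is nontrivial.

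The crux of the argument, and the main obstacle, is the identification of the two substitutions through $\varphi$: for every $i\le N$,
\[
b_i a\equiv\varphi_{N,\gamma}(\wz^i)\big|_{\wz^j=a_j b}\pmod{\zhuOzero{T}{n}{m}(V)}.
\]
This is exactly Lemma~\ref{lemma:inv-ab}. Writing $b_i a=\Res_{\wx}\wx^i Y(b,\wx)a$ and applying that congruence with $(1+\wx)$-exponent $0$ and $\wx$-exponent $i$ produces $(-1)^{i+1}\Res_{\wx}(1+\wx)^{\gamma-i}\wx^i Y(a,\wx)b$, which by \eqref{eq:def-phi} is $\varphi_{N,\gamma}(\wz^i)|_{\wz^j=a_j b}$ once $a_j b=0$ $(j>N)$ is used to replace $\sum_{j\le N}\wz^j\wx^{-j-1}$ by $Y(a,\wx)b$. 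For $i>N$ both sides vanish, so by linearity of $\varphi$ and of the substitution the congruence extends to every Laurent polynomial supported in degrees $\le N$. Applying it to $\pmul{T}{\nthr}{\none+\nthr-\ntwo}{\none}(\wt b,\wt a;\wz)$, whose support lies in that range by \eqref{eq:def-mul-x}, and recalling that this polynomial specializes under $\wz^i\mapsto b_i a$ to $b*^{T}_{n,m+n-p,m}a$, gives
\[
b*^{T}_{n,m+n-p,m}a\equiv\varphi_{N,\gamma}\big(\pmul{T}{\nthr}{\none+\nthr-\ntwo}{\none}(\wt b,\wt a;\wz)\big)\big|_{\wz^i=a_i b}\pmod{\zhuOzero{T}{n}{m}(V)}.
\]

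To finish I would take Lemma~\ref{lemma:comm-x} with $\alpha=\wt a$, $\beta=\wt b$ and substitute $\wz^i\mapsto a_i b$. By \eqref{eq:multi-ab} the first term becomes $a*^{T}_{n,p,m}b$; by \eqref{eq:def-unit-ab} the residue term becomes $\Res_{\wx}(1+\wx)^{\wt a-1+p-n}\sum_j\hunitmu{T}{\rbk{p}{n}}{\nthr}{\none}(a,b,j)\wx^{-j-1}$; and the membership in $\zhuOi{T}{n}{m}(\wt a,\wt b;\wz)$ becomes membership in $\zhuOi{T}{n}{m}(V)$, since the latter is by definition spanned by such specializations, see \eqref{eq:span-a-b}. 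Replacing the $\varphi$ term by $b*^{T}_{n,m+n-p,m}a$ via the displayed congruence then yields the assertion modulo $\zhuOzero{T}{n}{m}(V)+\zhuOi{T}{n}{m}(V)$. No analytic difficulty arises beyond this; the remaining work is bookkeeping, and the one point to verify carefully is that the exponent shift $\wt a+\wt b+m-n-2-s-t$ in Lemma~\ref{lemma:inv-ab} matches $\gamma-i$ in \eqref{eq:def-phi} and that $N$, $\gamma$ specialize consistently in both lemmas.
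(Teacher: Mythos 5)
Your proposal is correct and takes essentially the same route as the paper: both arguments reduce to homogeneous $a,b$, specialize Lemma \ref{lemma:comm-x} at $\alpha=\wt a$, $\beta=\wt b$, substitute $\wz^j\mapsto a_jb$ (so that the first and third terms and the membership statement specialize exactly as you say), and then use Lemma \ref{lemma:inv-ab} to identify the $\varphi$-term with $b*^{T}_{n,m+n-p,m}a$ modulo $\zhuOzero{T}{n}{m}(V)$. The only difference is cosmetic: you prove that identification monomial-by-monomial (applying Lemma \ref{lemma:inv-ab} with $(1+\wx)$-exponent $0$ and extending by linearity, using $a_ib=b_ia=0$ for $i>\wt a+\wt b-1-\lw$), whereas the paper instead matches the explicit residue expansion \eqref{eq:ab-ba} of the $\varphi$-image against the defining formula of $b*^{T}_{n,m+n-p,m}a$; both yield the same intermediate congruence.
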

\begin{proof}
We may assume $a$ and $b$ to be homogeneous elements of $V$.
We simply write $\wr=\rbk{p}{n}$.
Let $\varepsilon$ be the integer defined in \eqref{eq:ab-ba-epsilon}.
By Lemma \ref{lemma:inv-ab} and \eqref{eq:ab-ba}, we have
\begin{align*}
& b*^{T}_{n,m+p-n,m}a\\
&\equiv\sum_{i=0}^{l_1+l_3-l_2+\varepsilon}\binom{-l_2-1}{i}(-1)^{-l_2-i}
\Res_{\wx}
(1+\wx)^{\wt a-1+p-n+i}\wx^{-l_2-1-i}\\
&\quad{}\times
\sum_{\begin{subarray}{c}j\in\Z
\end{subarray}}
\hunitmu{T}{\wr}{\nthr}{\none}(a,b,j)\wx^{-j-1}
\qquad\pmod{\zhuOzero{T}{n}{m}(V)+\zhuOi{T}{n}{m}(V)}\\
&=\sum_{i=0}^{l_1+l_3-l_2+\varepsilon}\binom{-l_2-1}{i}(-1)^{-l_2-i}\Res_{\wx}
(1+\wx)^{\wt a-1+p-n+i}\wx^{-l_2-1-i}\\
&\qquad\times
\sum_{\begin{subarray}{c}j\in\Z
\end{subarray}}
\unitmu{T}{\wr}{n}{m}(\wt a,\wt b,j;\wz)|_{\wz^k=a_{k}b}\ \wx^{-j-1}\\
&=
\varphi_{\wt a+\wt b-1-\lw,\wt a+\wt b+m-n-2}(\pmul{T}{\nthr}{\none+\nthr-\ntwo}{\none}(\wt b,\wt a;\wz))|_{\wz^k=a_{k}b},
\end{align*}
where $\varphi_{\wt a+\wt b-1-\lw,\wt a+\wt b+m-n-2}$ is defined by \eqref{eq:def-phi}.
Thus, the assertion follows from Lemma \ref{lemma:comm-x}.
\end{proof}

By \eqref{eq:identity} and 
Lemmas \ref{lemma:n-neq-p-zero} and \ref{lemma:ab-ba-V},
we have
\begin{align}
\label{eq:a-multi-1}
a*^{T}_{n,m,m}{\bf 1}&\equiv a\pmod{\zhuOzero{T}{n}{m}(V)+\zhuOi{T}{n}{m}(V)}
\end{align}
for $a\in V$.

The same argument as in the proof of \cite[Lemma 3.8]{DJ2} shows
the following lemma.
\begin{lemma}\label{lemma:ass-O}
For $m,p,n\in(1/T)\Z$, we have
$V*^{T}_{n,p,m}\zhuO{T}{p}{m}(V)\subset \zhuO{T}{n}{m}(V)$ and
$\zhuO{T}{n}{p}(V)*^{T}_{n,p,m}V\subset \zhuO{T}{n}{m}(V)$.
\end{lemma}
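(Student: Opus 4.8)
The plan is to fix $n,p,m$ and prove the left inclusion $V*^{T}_{n,p,m}\zhuO{T}{p}{m}(V)\subset\zhuO{T}{n}{m}(V)$ by treating separately the four summands $\zhuOzero{T}{p}{m}(V),\zhuOi{T}{p}{m}(V),\zhuOii{T}{p}{m}(V),\zhuOiii{T}{p}{m}(V)$ of $\zhuO{T}{p}{m}(V)$ from Definition \ref{definition:O(V)}, and to obtain the right inclusion $\zhuO{T}{n}{p}(V)*^{T}_{n,p,m}V\subset\zhuO{T}{n}{m}(V)$ by the mirror-image argument. The only tools used are the associativity congruence $(a*^{T}_{n,p_2,p_1}b)*^{T}_{n,p_1,m}c\equiv a*^{T}_{n,p_2,m}(b*^{T}_{p_2,p_1,m}c)\pmod{\zhuOii{T}{n}{m}(V)}$ recorded right after Definition \ref{definition:O(V)}, the exact left unit \eqref{eq:identity}, and the right unit modulo $\zhuOzero{T}{n}{m}(V)+\zhuOi{T}{n}{m}(V)$ given by \eqref{eq:a-multi-1}. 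In every case the strategy is to reassociate the triple product so that the distinguished factor coming from $\zhuO{T}{p}{m}(V)$ is moved into precisely the slot prescribed by the matching summand of $\zhuO{T}{n}{m}(V)$, while keeping track that each reassociation error lands in $\zhuOii{T}{n}{m}(V)$.

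For $w\in\zhuOzero{T}{p}{m}(V)+\zhuOi{T}{p}{m}(V)$ both inclusions are almost immediate once a vacuum is attached on the free side. On the left, \eqref{eq:a-multi-1} gives $u*^{T}_{n,p,m}w\equiv(u*^{T}_{n,p,m}w)*^{T}_{n,m,m}\mathbf{1}\pmod{\zhuOzero{T}{n}{m}(V)+\zhuOi{T}{n}{m}(V)}$, and the right-hand side is exactly a generator of $\zhuOiii{T}{n}{m}(V)$ (left factor $u$, middle factor $w$ with the correct indices $p,m$, right factor $\mathbf{1}$). On the right, \eqref{eq:identity} gives $w*^{T}_{n,p,m}u=(\mathbf{1}*^{T}_{n,n,p}w)*^{T}_{n,p,m}u$, which is directly a generator of $\zhuOiii{T}{n}{m}(V)$ (left factor $\mathbf{1}$, middle $w$, right factor $u$). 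Thus these two summands are absorbed without any bootstrap.

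For $\zhuOii{T}{p}{m}(V)$ I would do the left inclusion first, since it is self-contained: a generator $w=u^{\prime}*^{T}_{p,q,m}X$, where $X$ is an associativity obstruction of top index $q$, satisfies $u*^{T}_{n,p,m}w\equiv(u*^{T}_{n,p,q}u^{\prime})*^{T}_{n,q,m}X\pmod{\zhuOii{T}{n}{m}(V)}$ after one reassociation, and the right-hand side is again a generator of $\zhuOii{T}{n}{m}(V)$. The right inclusion for $\zhuOii$ is then derived from the left one: writing $w=u^{\prime}*^{T}_{n,r,p}X\in\zhuOii{T}{n}{p}(V)$ and reassociating, $w*^{T}_{n,p,m}u\equiv u^{\prime}*^{T}_{n,r,m}(X*^{T}_{r,p,m}u)$ modulo $\zhuOii{T}{n}{m}(V)$; distributing $*^{T}_{r,p,m}u$ over the two terms of $X$ and reassociating each shows that $X*^{T}_{r,p,m}u$ equals an obstruction (with the outer factor $c$ replaced by $c*^{T}_{\cdot,p,m}u$) plus error terms, and each error term, after one further reassociation that pushes the $\zhuOii$-factor to the outer right, lies in a space $V*^{T}_{n,\cdot,m}\zhuOii{T}{\cdot}{m}(V)$ and is absorbed by the already-proven left inclusion for $\zhuOii$.

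Finally, the summand $\zhuOiii{T}{p}{m}(V)$ is handled using the right-absorption of $\zhuOii$ just obtained. A generator $w=(u^{\prime}*^{T}_{p,q_2,q_1}s)*^{T}_{p,q_1,m}v^{\prime}$ with $s\in\zhuOzero{T}{q_2}{q_1}(V)+\zhuOi{T}{q_2}{q_1}(V)$ is rewritten by two reassociations as $u*^{T}_{n,p,m}w\equiv\big((u*^{T}_{n,p,q_2}u^{\prime})*^{T}_{n,q_2,q_1}s\big)*^{T}_{n,q_1,m}v^{\prime}$, a generator of $\zhuOiii{T}{n}{m}(V)$; the error produced by the inner reassociation lies in $\zhuOii{T}{n}{q_1}(V)*^{T}_{n,q_1,m}V$, which is contained in $\zhuO{T}{n}{m}(V)$ by the right inclusion for $\zhuOii$, and the outer error lies in $\zhuOii{T}{n}{m}(V)$. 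The right inclusion for $\zhuOiii$ is entirely symmetric. I expect the main obstacle to be exactly this index-bookkeeping: one must guarantee that every reassociation error falls into a summand of $\zhuO{T}{n}{m}(V)$ carrying the correct gradings $n,m$, which dictates the order in which the summands must be treated (left $\zhuOii$, then right $\zhuOii$, then $\zhuOiii$) and, for $\zhuOiii$, requires the nested obstruction error terms to be routed through the right-absorption of $\zhuOii$ rather than through the full statement being proved. Once this ordering is respected, each individual step is a routine application of the associativity congruence together with \eqref{eq:identity} and \eqref{eq:a-multi-1}, exactly as in the proof of \cite[Lemma 3.8]{DJ2}.
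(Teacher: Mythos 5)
Your proposal is correct and follows essentially the same route as the paper: the paper's entire proof is the remark that the same argument as in the proof of \cite[Lemma 3.8]{DJ2} applies, and that argument is precisely the generator-by-generator reassociation you carry out, using the associativity congruence recorded after Definition \ref{definition:O(V)} together with \eqref{eq:identity} and \eqref{eq:a-multi-1}. Your explicit bookkeeping --- absorbing $\zhuOzero{T}{p}{m}(V)+\zhuOi{T}{p}{m}(V)$ into $\zhuOiii{T}{n}{m}(V)$ via the unit laws, and treating left $\zhuOii{T}{p}{m}(V)$ before right $\zhuOii{T}{n}{p}(V)$ before $\zhuOiii{T}{p}{m}(V)$ so that every reassociation error is absorbed by an already-proven case --- is exactly what the citation leaves implicit.
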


We define 
\begin{align}
\label{eq:def-atnmv}
\zhumod{T}{n}{m}(V)&=V/\zhuO{T}{n}{m}(V).
\end{align}
If $m=n$, we simply write $\zhualg{T}{m}(V)=\zhumod{T}{m}{m}(V)$.
By Definition \ref{definition:O(V)}, \eqref{eq:identity}, \eqref{eq:a-multi-1}
and Lemma \ref{lemma:ass-O},
we have the following result.

\begin{theorem}\label{theorem:zhu-algebra}
Let $m,n\in(1/T)\N$.
Then, $(\zhualg{T}{m}(V),*^{T}_{m,m,m})$ is an associative $\C$-algebra and
$\zhumod{T}{n}{m}(V)$ is an $\zhualg{T}{n}(V)$-$\zhualg{T}{m}(V)$-bimodule,
where the left action of $\zhualg{T}{n}(V)$ is given by $*^{T}_{n,n,m}$
and the right action of $\zhualg{T}{m}(V)$ is given by $*^{T}_{n,m,m}$.
\end{theorem}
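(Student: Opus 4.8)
The plan is to assemble the facts already in hand into a verification of the algebra and bimodule axioms, noting that every associativity-type relation I need is a specific instance of the generators of $\zhuOii{T}{n}{m}(V)$, while the two identity relations and the well-definedness come for free from \eqref{eq:identity}, \eqref{eq:a-multi-1} and Lemma \ref{lemma:ass-O}. First I would check that each of the three products descends to the relevant quotient. For the algebra product $*^{T}_{m,m,m}$, Lemma \ref{lemma:ass-O} with $n=p=m$ gives $V*^{T}_{m,m,m}\zhuO{T}{m}{m}(V)\subset\zhuO{T}{m}{m}(V)$ and $\zhuO{T}{m}{m}(V)*^{T}_{m,m,m}V\subset\zhuO{T}{m}{m}(V)$, so $\zhuO{T}{m}{m}(V)$ is a two-sided ideal and $*^{T}_{m,m,m}$ induces a well-defined product on $\zhualg{T}{m}(V)$. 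For the left action $*^{T}_{n,n,m}$, the case $p=n$ of Lemma \ref{lemma:ass-O} gives $\zhuO{T}{n}{n}(V)*^{T}_{n,n,m}V\subset\zhuO{T}{n}{m}(V)$ and $V*^{T}_{n,n,m}\zhuO{T}{n}{m}(V)\subset\zhuO{T}{n}{m}(V)$, so the action descends to $\zhualg{T}{n}(V)\times\zhumod{T}{n}{m}(V)\to\zhumod{T}{n}{m}(V)$; similarly, the case $p=m$ handles the right action $*^{T}_{n,m,m}$.

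For the algebra structure I would read off associativity from the relation recorded immediately after Definition \ref{definition:O(V)}: taking $p_1=p_2=m$ there yields
\begin{align*}
(a*^{T}_{m,m,m}b)*^{T}_{m,m,m}c-a*^{T}_{m,m,m}(b*^{T}_{m,m,m}c)\in\zhuOii{T}{m}{m}(V)\subset\zhuO{T}{m}{m}(V)
\end{align*}
for all $a,b,c\in V$, which is exactly associativity in $\zhualg{T}{m}(V)$. That the image of ${\bf 1}$ is a two-sided identity is then immediate: \eqref{eq:identity} with $n=p=m$ gives ${\bf 1}*^{T}_{m,m,m}a=a$, and \eqref{eq:a-multi-1} gives $a*^{T}_{m,m,m}{\bf 1}\equiv a\pmod{\zhuO{T}{m}{m}(V)}$ since $\zhuOzero{T}{m}{m}(V)+\zhuOi{T}{m}{m}(V)\subset\zhuO{T}{m}{m}(V)$.

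The three bimodule axioms I would treat the same way, each as a single instance of that same relation with suitably chosen $p_1,p_2$: left associativity $(a*^{T}_{n,n,n}b)*^{T}_{n,n,m}w\equiv a*^{T}_{n,n,m}(b*^{T}_{n,n,m}w)$ from $p_1=p_2=n$; right associativity $(w*^{T}_{n,m,m}a)*^{T}_{n,m,m}b\equiv w*^{T}_{n,m,m}(a*^{T}_{m,m,m}b)$ from $p_1=p_2=m$; and the compatibility $(a*^{T}_{n,n,m}w)*^{T}_{n,m,m}b\equiv a*^{T}_{n,n,m}(w*^{T}_{n,m,m}b)$ of the two actions from $p_2=n$, $p_1=m$. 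In each case the displayed difference lies in $\zhuOii{T}{n}{m}(V)\subset\zhuO{T}{n}{m}(V)$, so the identity holds in $\zhumod{T}{n}{m}(V)$. The identities act correctly by \eqref{eq:identity} (giving ${\bf 1}*^{T}_{n,n,m}w=w$ for the left action) and \eqref{eq:a-multi-1} (giving $w*^{T}_{n,m,m}{\bf 1}\equiv w$ for the right action). I expect the only delicate point to be the bookkeeping of the three subscripts on each product, so that each axiom is matched to exactly the right specialization of the generating relation of $\zhuOii{T}{n}{m}(V)$; once the indices are aligned, no further computation is needed.
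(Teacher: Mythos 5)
Your proposal is correct and takes essentially the same route as the paper, whose entire proof consists of citing Definition \ref{definition:O(V)}, \eqref{eq:identity}, \eqref{eq:a-multi-1} and Lemma \ref{lemma:ass-O} --- exactly the ingredients you assemble, with the index specializations ($p_1,p_2\in\{m,n\}$ in the relation following Definition \ref{definition:O(V)}, and the cases $p=m$, $p=n$ of Lemma \ref{lemma:ass-O}) matched correctly to each axiom. Your write-up simply makes explicit the bookkeeping that the paper leaves to the reader.
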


Lemmas \ref{lemma:descend} and \ref{lemma:multi-descend} 
imply the following result.
\begin{proposition}
Let $l,m,n\in(1/T)\N$ with $l\leq n,m$.
Then $\zhuOi{T}{n}{m}(V)$ is a subspace of $\zhuOi{T}{n-l}{m-l}(V)$.
Moreover, the identity map on $V$ induces a surjective algebra homomorphism 
$\zhualg{T}{m}(V)\rightarrow \zhualg{T}{m-l}(V)$ and
a surjective $\zhualg{T}{n}(V)$-$\zhualg{T}{m}(V)$-bimodule homomorphism 
$\zhumod{T}{n}{m}(V)\rightarrow \zhumod{T}{n-l}{m-l}(V)$.
\end{proposition}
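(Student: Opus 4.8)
The plan is to reduce the whole statement to the single inclusion $\zhuO{T}{n}{m}(V)\subseteq\zhuO{T}{n-l}{m-l}(V)$, which is exactly what makes $\mathrm{id}_V$ descend to well-defined surjections $\zhualg{T}{m}(V)\to\zhualg{T}{m-l}(V)$ and $\zhumod{T}{n}{m}(V)\to\zhumod{T}{n-l}{m-l}(V)$; surjectivity is then automatic, and the homomorphism properties will follow from the compatibility of the products. The first assertion is immediate: since $n-l\leq n$ and $m-l\leq m$, Lemma \ref{lemma:descend} gives $\zhuOi{T}{n}{m}(\wt a,\wt b;\wz)\subseteq\zhuOi{T}{n-l}{m-l}(\wt a,\wt b;\wz)$ for all homogeneous $a,b\in V$, and specialising the spanning polynomials via $\wz^j\mapsto a_jb$ yields $\zhuOi{T}{n}{m}(V)\subseteq\zhuOi{T}{n-l}{m-l}(V)$.

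For the inclusion of the full $\zhuO$ I would treat the four summands in turn. The space $\zhuOzero{T}{n}{m}(V)$ is spanned by the elements $a_{-2}{\bf 1}+(\wt a+m-n)a$, hence depends only on $m-n=(m-l)-(n-l)$, so $\zhuOzero{T}{n}{m}(V)=\zhuOzero{T}{n-l}{m-l}(V)$; the $\zhuOi$ summand is the first assertion. For $\zhuOii$ and $\zhuOiii$ the key input is Lemma \ref{lemma:multi-descend}, which says that for $p\geq l$ and homogeneous $a,b$ one has $a*^{T}_{n,p,m}b\equiv a*^{T}_{n-l,p-l,m-l}b\pmod{\zhuOi{T}{n-l}{m-l}(V)}$. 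Replacing each factor of a generator whose intermediate indices $p_1,p_2,p_3$ all satisfy $p_i\geq l$, and absorbing the resulting $\zhuOi$-errors with Lemma \ref{lemma:ass-O}, such a generator becomes, modulo $\zhuO{T}{n-l}{m-l}(V)$, the corresponding generator of $\zhuOii{T}{n-l}{m-l}(V)$ (resp. $\zhuOiii{T}{n-l}{m-l}(V)$) with intermediate indices shifted down by $l$; in particular it lies in $\zhuO{T}{n-l}{m-l}(V)$.

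Granting the inclusion, the homomorphism properties are clean single applications of Lemma \ref{lemma:multi-descend}: the congruence $a*^{T}_{m,m,m}b\equiv a*^{T}_{m-l,m-l,m-l}b\pmod{\zhuOi{T}{m-l}{m-l}(V)}$ shows the induced map carries the product of $\zhualg{T}{m}(V)$ to that of $\zhualg{T}{m-l}(V)$, and the same congruence for the triples $(n,n,m)\mapsto(n-l,n-l,m-l)$ and $(n,m,m)\mapsto(n-l,m-l,m-l)$ (both legitimate as $l\leq n,m$) gives compatibility with the left and right actions on $\zhumod{T}{n}{m}(V)$.

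The main obstacle is the descent of those $\zhuOii$ and $\zhuOiii$ generators whose intermediate indices are not all $\geq l$: Lemma \ref{lemma:multi-descend} shifts all three indices of a product at once, so it cannot be applied to a factor with middle index $p_i<l$. To circumvent this I would induct on $Tl\in\N$; the inductive hypothesis applied at $(n-1/T,m-1/T)$ with step $l-1/T$ reduces everything to the single case $l=1/T$, where the only troublesome generators are those with some $p_i=0$. These I expect to absorb directly into $\zhuO{T}{n-1/T}{m-1/T}(V)$ by hand, using Lemma \ref{lemma:ass-O} together with the identity relations \eqref{eq:identity} and \eqref{eq:a-multi-1}, exactly as in the computation behind \cite[Proposition 4.3]{DJ2}.
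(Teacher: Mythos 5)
Your overall skeleton is the same as the paper's (admittedly one-line) proof: reduce everything to $\zhuO{T}{n}{m}(V)\subseteq\zhuO{T}{n-l}{m-l}(V)$, note $\zhuOzero{T}{n}{m}(V)=\zhuOzero{T}{n-l}{m-l}(V)$ since only $m-n$ enters, get $\zhuOi{T}{n}{m}(V)\subseteq\zhuOi{T}{n-l}{m-l}(V)$ from Lemma \ref{lemma:descend}, descend the generators of $\zhuOii{T}{n}{m}(V)$ and $\zhuOiii{T}{n}{m}(V)$ whose intermediate indices are all $\geq l$ via Lemma \ref{lemma:multi-descend} with errors absorbed by Lemma \ref{lemma:ass-O}, and obtain the homomorphism properties from Lemma \ref{lemma:multi-descend} applied to the triples $(m,m,m)$, $(n,n,m)$, $(n,m,m)$. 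You have also correctly isolated the point that the paper's citation of Lemmas \ref{lemma:descend} and \ref{lemma:multi-descend} does not literally cover: in Definition \ref{definition:O(V)} the indices $p_1,p_2,p_3$ run over all of $(1/T)\N$, and Lemma \ref{lemma:multi-descend} shifts all three indices of a product at once, so it cannot touch a factor $*^{T}_{\cdot,p_i,\cdot}$ with $p_i<l$.

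The genuine gap is in your repair of that point. The induction on $Tl$ is sound but only renames the problem: the base case $l=1/T$ with some $p_i=0$ contains the entire difficulty. For it you propose to absorb the offending generators ``by hand, using Lemma \ref{lemma:ass-O} together with \eqref{eq:identity} and \eqref{eq:a-multi-1}''; this cannot succeed. Those facts place an element in $\zhuO{T}{N}{M}(V)$ only for pairs $(N,M)$ occurring as the \emph{outer} indices of a product $*^{T}_{N,q,M}$ already present in the expression, and in a degenerate generator every product in sight has outer pair $(n,m)$, $(p_3,m)$, $(p_2,m)$, etc.; the only mechanism in the paper for manufacturing a product with outer pair $(n-1/T,m-1/T)$ is Lemma \ref{lemma:multi-descend}, which is exactly what is unavailable when a middle index is $0$. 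What is actually needed is a \emph{degenerate companion} to Lemma \ref{lemma:multi-descend}: if $p<l\leq n,m$, then the polynomial $\pmul{T}{n}{p}{m}(\alpha,\beta;\wz)$ of \eqref{eq:def-mul-x} already lies in $\zhuOi{T}{n-l}{m-l}(\alpha,\beta;\wz)$, so that $V*^{T}_{n,p,m}V\subseteq\zhuOi{T}{n-l}{m-l}(V)$ and any generator containing a factor with middle index $<l$ dies wholesale (heuristically, such a product represents a composition through $M(p)$, and $M(p-l)$ ``has negative degree''). This is a computation with the spaces $O(N,Q,q;\wz)$ in the style of Lemmas \ref{lemma:descend} and \ref{lemma:comm-x}: expand the polynomials $\unitmu{T}{r}{n}{m}(\alpha,\beta,j;\wz)$ by \eqref{eq:unit-s}, write $(1+\wx)^{Q}=(1+\wx)^{Q-\rho}(1+\wx)^{\rho}$, and check an exponent inequality that holds precisely because $l\geq p+1/T$; it is not a consequence of the quotient-level lemmas you invoke. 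With such a lemma your case analysis closes (apply it to each factor with small middle index, Lemma \ref{lemma:multi-descend} to the rest, Lemma \ref{lemma:ass-O} to absorb errors); without it, the crucial step of your argument is an unsupported assertion rather than a proof.
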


Lemma \ref{lemma:mul-Tprime-T} and \eqref{eq:Tprime-T}
imply the following result.
\begin{proposition}
Let $m,n\in (1/T)\N$ and $T^{\prime}$ a positive multiple of $T$.
Then $\zhuOi{T}{n}{m}(V)$ is a subspace of $\zhuOi{T^{\prime}}{n}{m}(V)$.
Moreover, the identity map on $V$ induces a surjective algebra homomorphism 
$\zhualg{T^{\prime}}{m}(V)\rightarrow \zhualg{T}{m}(V)$ and
a surjective $\zhualg{T^{\prime}}{n}(V)$-$\zhualg{T^{\prime}}{m}(V)$-bimodule homomorphism 
$\zhumod{T^{\prime}}{n}{m}(V)\rightarrow \zhumod{T}{n}{m}(V)$.
\end{proposition}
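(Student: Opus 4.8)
The plan is to prove the stated inclusion $\zhuOi{T}{n}{m}(V)\subseteq\zhuOi{T^{\prime}}{n}{m}(V)$ directly, and then to read off the two homomorphisms from it together with Lemma \ref{lemma:mul-Tprime-T}. Set $d=T^{\prime}/T$ and, as in the paragraph preceding \eqref{eq:Tprime-T}, regard $m,n\in(1/T)\N$ inside $(1/T^{\prime})\N$, so that \eqref{eq:Tprime-T} is available. First I would reduce the inclusion to a statement about the $T^{\prime}$-unit polynomials. By \eqref{eq:Tprime-T} we have $\zhuOit{T}{r}{n}{m}(\alpha,\beta;\wz)=\zhuOit{T^{\prime}}{dr}{n}{m}(\alpha,\beta;\wz)$ for $r=0,\ldots,T-1$, hence
\begin{align*}
\zhuOi{T}{n}{m}(\alpha,\beta;\wz)&=\bigcap_{r=0}^{T-1}\zhuOit{T^{\prime}}{dr}{n}{m}(\alpha,\beta;\wz),\\
\zhuOi{T^{\prime}}{n}{m}(\alpha,\beta;\wz)&=\bigcap_{s=0}^{T^{\prime}-1}\zhuOit{T^{\prime}}{s}{n}{m}(\alpha,\beta;\wz).
\end{align*}

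By \eqref{eq:unit-qs} applied at $T^{\prime}$, the unit $\unitmu{T^{\prime}}{s}{n}{m}(\alpha,\beta,i;\wz)$ lies in $\zhuOit{T^{\prime}}{s''}{n}{m}(\alpha,\beta;\wz)$ for every $s''\neq s$; in particular, when $d\nmid s$ it lies in $\zhuOit{T^{\prime}}{dr}{n}{m}(\alpha,\beta;\wz)$ for all $r$, and hence in $\zhuOi{T}{n}{m}(\alpha,\beta;\wz)$. Moreover Lemma \ref{lemma:iso-oplus}, applied at $T^{\prime}$, identifies $\C[\wz,\wz^{-1}]/\zhuOi{T^{\prime}}{n}{m}(\alpha,\beta;\wz)$ with $\bigoplus_{s=0}^{T^{\prime}-1}\C[\wz,\wz^{-1}]/\zhuOit{T^{\prime}}{s}{n}{m}(\alpha,\beta;\wz)$, under which $\zhuOi{T}{n}{m}(\alpha,\beta;\wz)$ is precisely the sum of the components indexed by the $s$ with $d\nmid s$. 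Thus
\begin{align*}
\zhuOi{T}{n}{m}(\alpha,\beta;\wz)=\zhuOi{T^{\prime}}{n}{m}(\alpha,\beta;\wz)+\Span_{\C}\{\unitmu{T^{\prime}}{s}{n}{m}(\alpha,\beta,i;\wz)\mid d\nmid s,\ i\in\Z\},
\end{align*}
and after the substitution $\wz^{j}=a_{j}b$ (with $\alpha=\wt a,\ \beta=\wt b$) the desired inclusion reduces to showing that
\begin{align*}
\hunitmu{T^{\prime}}{s}{n}{m}(a,b,i)\in\zhuOi{T^{\prime}}{n}{m}(V)\quad\text{for all homogeneous }a,b\in V,\ i\in\Z,\ d\nmid s.
\end{align*}

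I expect this last membership to be the main obstacle. The delicate point is that $\unitmu{T^{\prime}}{s}{n}{m}(\wt a,\wt b,i;\wz)$ itself does \emph{not} belong to $\zhuOi{T^{\prime}}{n}{m}(\wt a,\wt b;\wz)$, so the membership cannot be seen from the single pair $(a,b)$ and must instead be produced from other pairs by means of the relations of $V$. I would establish it by combining the skew-symmetry congruence of Lemma \ref{lemma:inv-ab} with the explicit shape \eqref{eq:unit-s} of the units, rewriting $\hunitmu{T^{\prime}}{s}{n}{m}(a,b,i)$ as a combination of substitutions of $\zhuOi{T^{\prime}}{n}{m}$-polynomials, along the lines of the computations in \cite{DJ1,DJ2}. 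The reverse inclusion $\zhuOi{T^{\prime}}{n}{m}(V)\subseteq\zhuOi{T}{n}{m}(V)$ is immediate from $\zhuOi{T^{\prime}}{n}{m}(\alpha,\beta;\wz)\subseteq\zhuOi{T}{n}{m}(\alpha,\beta;\wz)$, so the two inclusions together even yield $\zhuOi{T}{n}{m}(V)=\zhuOi{T^{\prime}}{n}{m}(V)$.

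Finally I would obtain the homomorphisms as in Theorem \ref{theorem:zhu-algebra}. The subspace $\zhuOzero{T}{n}{m}(V)$ depends only on $m,n$, and substituting $\wz^{j}=a_{j}b$ in Lemma \ref{lemma:mul-Tprime-T} gives $a*^{T^{\prime}}_{n,p,m}b-a*^{T}_{n,p,m}b\in\zhuOi{T}{n}{m}(V)$ for all $a,b\in V$ and $p\in(1/T)\N$. Using this product congruence together with $\zhuOi{T^{\prime}}{n}{m}(V)\subseteq\zhuOi{T}{n}{m}(V)$ and Lemma \ref{lemma:ass-O}, each spanning element of $\zhuOii{T^{\prime}}{n}{m}(V)$ and of $\zhuOiii{T^{\prime}}{n}{m}(V)$ lies in $\zhuO{T}{n}{m}(V)$, whence $\zhuO{T^{\prime}}{n}{m}(V)\subseteq\zhuO{T}{n}{m}(V)$. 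Consequently the identity on $V$ descends to surjective linear maps $\zhualg{T^{\prime}}{m}(V)\to\zhualg{T}{m}(V)$ and $\zhumod{T^{\prime}}{n}{m}(V)\to\zhumod{T}{n}{m}(V)$; since $*^{T^{\prime}}$ and $*^{T}$ agree modulo $\zhuO{T}{n}{m}(V)$ by the same congruence, the first is an algebra homomorphism and the second a bimodule homomorphism, exactly as in Theorem \ref{theorem:zhu-algebra}.
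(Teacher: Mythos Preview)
Your proposal has a genuine gap, but it stems from taking the proposition's first sentence too literally. The inclusion stated there is almost certainly a misprint for $\zhuOi{T^{\prime}}{n}{m}(V)\subseteq\zhuOi{T}{n}{m}(V)$: this is the direction that follows at once from \eqref{eq:Tprime-T} (since $\zhuOi{T^{\prime}}{n}{m}(\alpha,\beta;\wz)$ is an intersection over all $s=0,\ldots,T^{\prime}-1$, while $\zhuOi{T}{n}{m}(\alpha,\beta;\wz)$ corresponds to the subfamily $s\in d\{0,\ldots,T-1\}$), it is the direction needed for the quotient maps to be well defined, and it parallels the preceding proposition exactly. You yourself note this direction is ``immediate,'' and the paper's one-line justification (Lemma \ref{lemma:mul-Tprime-T} and \eqref{eq:Tprime-T}) supports only this direction.

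Your attempt at the harder inclusion $\zhuOi{T}{n}{m}(V)\subseteq\zhuOi{T^{\prime}}{n}{m}(V)$ reduces it correctly to the membership $\hunitmu{T^{\prime}}{s}{n}{m}(a,b,i)\in\zhuOi{T^{\prime}}{n}{m}(V)$ for $d\nmid s$, but then leaves this as a plan rather than a proof. The tool you propose, Lemma \ref{lemma:inv-ab}, works modulo $\zhuOzero{T}{n}{m}(V)$, not modulo $\zhuOi{T^{\prime}}{n}{m}(V)$, so it is unclear how it could produce the membership you need; moreover your claim would force $\zhuOi{T}{n}{m}(V)=\zhuOi{T^{\prime}}{n}{m}(V)$, which is strictly stronger than anything the paper asserts or uses, and there is no evident reason to expect it in general. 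Your final paragraph, deriving the homomorphisms from the easy inclusion together with Lemma \ref{lemma:mul-Tprime-T}, is the correct approach and matches the paper, though you (like the paper) pass silently over the fact that the spanning elements of $\zhuOii{T^{\prime}}{n}{m}(V)$ and $\zhuOiii{T^{\prime}}{n}{m}(V)$ involve parameters $p_i\in(1/T^{\prime})\N$ not necessarily in $(1/T)\N$, for which Lemma \ref{lemma:mul-Tprime-T} does not literally apply.
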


\begin{remark}
Suppose $V$ is a vertex operator algebra.
Let $g$ be an automorphism of $V$ of finite order $t$.
In \cite{DJ2}, a product $*^{n}_{g,m,p}$ on $V$ and a quotient space $A_{g,n,m}(V)=V/O_{g,n,m}(V)$ of $V$
are constructed for each $n,p,m\in(1/t)\N$.
If $g=\id_{V}$, then $*^{n}_{g,m,p}=*^{n}_{m,p}$ and
$A_{g,n,m}(V)=A_{n,m}(V)$, where $*^{n}_{m,p}$ is a product on $V$ and
$A_{n,m}(V)$ is a quotient space of $V$ constructed in \cite{DJ1}.

We shall discuss a relation between $A_{g,n,m}(V)$ and $\zhumod{T}{n}{m}(V)$.
Suppose $T=1$. Then $*^{1}_{n,p,m}=*^{n}_{m,p}$ by the definition.
Moreover, $\zhuOzero{1}{n}{m}(V)+\zhuOi{1}{n}{m}(V)=O^{\prime}_{n,m}(V)$
by \eqref{eq:def-otnms} and \eqref{eq:def-otnm},
where $O^{\prime}_{n,m}(V)$ is the subspace of $V$ defined on p.\! 801 in \cite{DJ1}. 
Thus, $\zhuO{1}{n}{m}(V)=O_{n,m}(V)$ and $\zhumod{1}{n}{m}(V)=A_{n,m}(V)$.

We shall use the notation in Remark \ref{remark:twisted-vt-Y}
and \cite{DJ2}.
For homogeneous $a,b\in V$ and 
$P(\wz)\in \zhuOi{t}{n}{m}(\wt a,\wt b ; \wz)$, 
the definition of $\zhuOi{t}{n}{m}(\wt a,\wt b ; \wz)$ implies
\begin{align*}
P(\wz)|_{z^j=a_jb}&=\sum_{r=0}^{t-1}P(\wz)|_{z^j=a^{(g,r)}_jb}\in O^{\prime}_{g,n,m}(V),
\end{align*}
where $O^{\prime}_{g,n,m}(V)$ is the subspace of $V$ defined on p.\! 4240 in \cite{DJ2}.
Thus, $\zhuOi{t}{n}{m}(V)$ is a subspace of $O^{\prime}_{g,n,m}(V)$.
We simply write $r=\rbk{p}{n}$, which is defined in \eqref{eq:wrij}.
For $s=0,\ldots,t-1$,
we have
\begin{align*}
&\hunitmu{t}{r}{n}{m}(a^{(g,s)},b,i)-\delta_{r,s}a^{(g,s)}_{i}b\\
&=(\unitmu{t}{r}{n}{m}(\wt a^{(g,s)},\wt b,i;z)-\delta_{r,s}z^i)|_{z^j=a^{(g,s)}_{j}b}\in O^{\prime}_{g,n,m}(V)
\end{align*}
since 
$\unitmu{t}{r}{n}{m}(\wt a^{(g,s)},\wt b,i;z)-\delta_{r,s}z^i\in \zhuOit{t}{s}{n}{m}(\wt a^{(g,s)},\wt b;z)$
by \eqref{eq:unit-qs}.
Therefore, by \eqref{eq:def-mul-x} and \eqref{eq:multi-ab} we have
\begin{align*}
a*^{t}_{n,p,m}b&=\sum_{s\neq r}a^{(g,s)}*^{t}_{n,p,m}b+a^{(g,r)}*^{t}_{n,p,m}\\
&\equiv a^{(g,r)}*^{n}_{g,m,p}b\pmod{O^{\prime}_{g,n,m}(V)}.
\end{align*}
We conclude that $\zhuOi{t}{n}{m}(V)\subset O_{g,n,m}(V)$ and 
$A_{g,n,m}(V)$ is a quotient space of $\zhumod{t}{n}{m}(V)$.

For an automorphism group $G$ of $V$ of finite order,
the same argument as above shows $A_{G,n}(V)$ in \cite{MT} is a quotient space of $\zhualg{|G|}{n}(V)$.
\end{remark}

\section{\label{section:graded-vt}
$(1/T)\N$-graded $(V,T)$-modules and $\zhumod{T}{n}{m}(V)$}

Throughout this section, 
we always assume the properties mentioned at the beginning of Section \ref{section:zhu}
for a vertex algebra $V$ as stated there.
In this section, for $m\in(1/T)\N$ we describe a relation between
the $\zhualg{T}{m}(V)$-modules and the $(1/T)\N$-graded $(V,T)$-modules defined below.
\begin{definition}
{\em A $(1/T)\N$-graded $(V,T)$-module} $M$ is a 
$(V,T)$-module with a $(1/T)\N$-grading $M =\oplus_{n\in(1/T)\N}M(n)$
such that
\begin{align*}
a_{i}M(n)\subset M(n+\wt a-i-1)
\end{align*}
for homogeneous $a\in V$ and $i,n\in (1/T)\N$, where $M(n)=0$ for $n<0$.
\end{definition}
For a $(1/T)\N$-graded $(V,T)$-module $M$, 
a $(V,T)$-submodule $N$ of $M$ is called {\it $(1/T)\N$-graded $(V,T)$-submodule} of $M$
if $N$ is a $(1/T)\N$-graded $(V,T)$-module such that 
every homogeneous subspace of $N$ is contained in some homogeneous subspace of $M$.
A non-zero $(1/T)\N$-graded $(V,T)$-module $M$ is called {\it simple} 
if there is no $(1/T)\N$-graded submodule of $M$ 
except $0$ and $M$ itself.

In the following,
we shall use a similar argument as in \cite[Section 4]{DJ1}.
Throughout this section, $m=l_1+i_1/T, n=l_3+i_3/T\in(1/T)\N$
with $l_1,l_2\in\N$ and $0\leq i_1,i_3\leq T-1$.
Until Proposition \ref{proposition:a-acts-omega},  
$M=\oplus_{i\in(1/T)\N}M(i)$ is a $(1/T)\N$-graded $(V,T)$-module.
Without loss of generality, we can shift the grading of a $(1/T)\N$-graded $(V,T)$-module $M$ so that $M(0)\neq 0$ if $M\neq 0$.

Define a linear map $o_{n,m} : V\rightarrow \Hom_{\C}(M(m),M(n))$ by
\begin{align}
\label{eq:def-onm}
o_{n,m}(a)&=a_{\wt a+m-n-1}
\end{align}
for homogeneous $a\in V$ and extend $o_{n,m}(a)$ for an arbitrary $a\in V$ by linearity.
If $m=n$, we simply write $o=o_{m,m}$.
Define a linear map $Z_{M,n,m}^{(s)}(a,b;-) : 
\C[\wz,\wz^{-1}]\rightarrow \Hom_{\C}(M(m),M)$ by
\begin{align}
\label{eq:zmnm}
Z_{M,n,m}^{(s)}(a,b;\wz^i)&=\cty{s}{M}{a,b}{\wt a+\wt b+m-n-2-i,i}
\end{align}
for $s=0,\ldots,T-1$ and homogeneous $a,b\in V$
and extend $Z_{M,n,m}^{(s)}(a,b;-)$  for arbitrary elements $a,b\in V$ by linearity.
Lemma \ref{lemma:express-ysab} implies that 
the image of $Z_{M,n,m}^{(s)}(a,b;f(z)) : M(m)\rightarrow M$
is contained in $M(n)$ for $f(z)\in \C[\wz,\wz^{-1}]$. That is,
$Z_{M,n,m}^{(s)}(a,b;-) : 
\C[\wz,\wz^{-1}]\rightarrow \Hom_{\C}(M(m),M(n))$.

\begin{lemma}\label{lemma:O-0-zero-x}
For $s=0,\ldots,T-1$ and 
homogeneous $a,b\in V$,
$Z_{M,n,m}^{(s)}(a,b;-)=0$ on $\zhuOit{T}{\ws}{n}{m}(\wt a,\wt b;\wz)$.
\end{lemma}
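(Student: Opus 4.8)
The plan is to use the spanning set of $\zhuOit{T}{s}{n}{m}(\wt a,\wt b;\wz)$ recorded right after \eqref{eq:def-o-poly}, namely the Laurent polynomials $\fo{T}{s}{n}{m}(\wt a,\wt b,d;\wz)$ with $d\le 0$ together with the monomials $\wz^i$ with $i\ge \wt a+\wt b-\lw$, and to check that $Z_{M,n,m}^{(s)}(a,b;-)$ annihilates each generator. For the monomials, note that $a_ib\in V_{\wt a+\wt b-i-1}$ and $V_t=0$ for $t<\lw$, so $a_ib=0$ once $i\ge \wt a+\wt b-\lw$; Remark \ref{remark:bound} then yields $\ty{s}{M}{a,b}{\wx_2,\wx_0}(w)\in M\db{\wx_2^{1/T}}\db{\wx_0}_{\ge \lw-\wt a-\wt b}$, so that $\cty{s}{M}{a,b}{k,l}=0$ whenever $l\ge \wt a+\wt b-\lw$. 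By the definition \eqref{eq:zmnm} this is exactly $Z_{M,n,m}^{(s)}(a,b;\wz^i)=0$ for $i\ge \wt a+\wt b-\lw$, which disposes of the monomial generators.

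Now fix $d\le 0$ and set $Q=\wt a-1+l_1+\delta(s\le i_1)+s/T$, $q=-l_1-l_3-\delta(s\le i_1)-\delta(T\le s+i_3)-1$ and $D=\wt a+\wt b+m-n-2$. Expanding $\fo{T}{s}{n}{m}(\wt a,\wt b,d;\wz)$ by \eqref{eq:def-o-poly} and applying \eqref{eq:zmnm} termwise gives
\[
Z_{M,n,m}^{(s)}(a,b;\fo{T}{s}{n}{m}(\wt a,\wt b,d;\wz))=\sum_{i\ge 0}\binom{Q}{i}\cty{s}{M}{a,b}{D-q-d-i,\,q+d+i}.
\]
I would then read the right-hand side as the left-hand side of \eqref{eq:Borcherds-Coeff} for the values $j=Q$, $l=q+d$, $k=D-Q-q-d$; these lie in $s/T+\Z$, $\Z$ and $(1/T)\Z$ respectively, and the residue attached to $j=Q$ by the congruence $j\equiv s/T\pmod{\Z}$ is exactly our $s$, so the $\cty{s}{M}{a,b}{}{}$ produced by \eqref{eq:Borcherds-Coeff} carries the correct $s$. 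Hence \eqref{eq:Borcherds-Coeff} rewrites the expression, evaluated on a vector $w\in M(m)$, as a finite sum (indexed by $t\ge 0$) of terms $a_{Q+q+d-t}\,b_{D-Q-q-d+t}\,w$ and $b_{D-Q-t}\,a_{Q+t}\,w$.

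The decisive step is then a weight count. Substituting $m=l_1+i_1/T$, $n=l_3+i_3/T$ and simplifying, one finds that $b_{D-Q-q-d+t}$ sends $M(m)$ into $M\big(\tfrac{(s+i_3)\bmod T}{T}-1+d-t\big)$, while $a_{Q+t}$ sends $M(m)$ into $M\big(\tfrac{i_1-s}{T}-\delta(s\le i_1)-t\big)$. Since $\tfrac{(s+i_3)\bmod T}{T}\in[0,1)$ and $d\le 0\le t$, the first target degree is strictly negative, and since $\tfrac{i_1-s}{T}-\delta(s\le i_1)\in[-1,0)$ for every $s$, so is the second. As $M$ vanishes in negative degrees, each inner factor $b_{D-Q-q-d+t}w$ and $a_{Q+t}w$ is zero, so $Z_{M,n,m}^{(s)}(a,b;-)$ kills $\fo{T}{s}{n}{m}(\wt a,\wt b,d;\wz)$ as well, completing the argument. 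I expect the only real work to be this degree bookkeeping: the correction terms $\delta(s\le i_1)$, $\delta(T\le s+i_3)$ and $s/T$ built into $Q$ and $q$ in \eqref{eq:def-otnms} are tuned precisely so that these intermediate weights fall strictly below $0$, and checking their interaction with the fractional parts of $m$ and $n$ is the crux.
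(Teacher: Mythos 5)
Your proof is correct and follows essentially the same route as the paper: reduce to the spanning set of $\zhuOit{T}{\ws}{n}{m}(\wt a,\wt b;\wz)$, use Remark \ref{remark:bound} to get the bound $\cty{s}{M}{a,b}{k,l}=0$ for $l\geq \wt a+\wt b-\lw$ (which both kills the monomial generators and lets the finite sum be extended to an infinite one), then apply \eqref{eq:Borcherds-Coeff} and observe that every resulting term contains an inner factor $a_{Q+t}w$ or $b_{D-Q-q-d+t}w$ landing in a strictly negative graded piece of $M$. The paper packages your weight bookkeeping through $s^{\vee}$ and the identity \eqref{eq:ssvee} (its $k$ equals your $D-Q-q-d$), but the computation is the same.
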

\begin{proof}
It is sufficient to show that 
$Z_{M,n,m}^{(s)}(a,b;\fo{T}{\ws}{\nthr}{\none}(\wt a,\wt b,d;\wz))=0$ for all $d\in\Z_{\leq 0}$.
Let $w\in M(m)$.
Since $Y_{M}(Y(a,\wx_0)b,\wx_2)w\in M\db{\wx_2^{1/T}}\db{\wx_0}_{\geq -\wt a-\wt b+\lw}$,
it follows by Remark \ref{remark:bound} that
\begin{align}
\label{eq:ab-bound}
\ty{s}{M}{a,b}{\wx_2,\wx_0}(w)\in M\db{\wx_2^{1/T}}\db{\wx_0}_{\geq -\wt a-\wt b+\lw}.
\end{align}

Let
\begin{align*}
j&=\wt a-1+l_1+\delta(\ws\leq i_1)+\frac{\ws}{T},\\
k&=\wt b-1+l_1+\delta(\ws^{\vee}\leq i_1)+\frac{\ws^{\vee}}{T}-d\quad\mbox{ and}\\
l&=-l_1-l_3-\delta(\ws\leq i_1)-\delta(T\leq \ws+i_3)-1+d,
\end{align*}
where $s^{\vee}$ is defined in \eqref{eq:def-vee}.
Since $a_{j+i}=b_{k+i}=0$ on $M(m)$ for all $i\in\N$,
it follows by \eqref{eq:Borcherds-Coeff}, \eqref{eq:ssvee} and \eqref{eq:ab-bound}
that
\begin{align*}
&Z_{M,n,m}^{(s)}(a,b;\fo{T}{\ws}{\nthr}{\none}(\wt a,\wt b,d;\wz))(w)\\
&=\sum_{i=0}^{\wt a+\wt b-1-\lw-l}\binom{j}{i}\cty{s}{M}{a,b}{j+k-i,l+i}(w)
\\
&=\sum_{i=0}^{\infty}\binom{j}{i}\cty{s}{M}{a,b}{j+k-i,l+i}(w)\\
&=0.
\end{align*}
\end{proof}

\begin{lemma}\label{lemma:O-0-zero}
For $u\in \zhuOzero{T}{n}{m}(V)+\zhuOi{T}{n}{m}(V)$,
$o_{n,m}(u)=0$
on $M(m)$.
\end{lemma}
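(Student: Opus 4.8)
The plan is to reduce the statement to the two families of spanning elements described in \eqref{eq:span-a-1} and \eqref{eq:span-a-b}, treating $\zhuOzero{T}{n}{m}(V)$ and $\zhuOi{T}{n}{m}(V)$ separately, and then to conclude by linearity of $o_{n,m}$. Throughout I would take $a,b\in V$ homogeneous, since the spanning sets consist of such expressions.

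First I would dispose of the $\zhuOzero{T}{n}{m}(V)$ part, i.e. the elements $a_{-2}{\bf 1}+(\wt a+m-n)a$. By the grading axiom and ${\bf 1}\in V_0$ we have $a_{-2}{\bf 1}\in V_{\wt a+1}$, so $o_{n,m}(a_{-2}{\bf 1})=(a_{-2}{\bf 1})_{\wt a+m-n}$. Lemma \ref{lemma:derivation} gives $Y_{M}(a_{-2}{\bf 1},\wx)=\frac{d}{d\wx}Y_{M}(a,\wx)$, and comparing coefficients yields $(a_{-2}{\bf 1})_{k}=-k\,a_{k-1}$ on $M$ for every $k\in(1/T)\Z$. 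Taking $k=\wt a+m-n$ gives $o_{n,m}(a_{-2}{\bf 1})=-(\wt a+m-n)\,a_{\wt a+m-n-1}=-(\wt a+m-n)\,o_{n,m}(a)$, so that $o_{n,m}\bigl(a_{-2}{\bf 1}+(\wt a+m-n)a\bigr)=0$ on $M(m)$.

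The substance is the $\zhuOi{T}{n}{m}(V)$ part, where I must show $o_{n,m}\bigl(P(\wz)|_{\wz^j=a_jb}\bigr)=0$ for $P(\wz)\in\zhuOi{T}{n}{m}(\wt a,\wt b;\wz)$. The key identity I would establish is that, for each $j\in\Z$,
\begin{align*}
o_{n,m}(a_jb)=\sum_{s=0}^{T-1}Z_{M,n,m}^{(s)}(a,b;\wz^{j}).
\end{align*}
Indeed, $a_jb\in V_{\wt a+\wt b-1-j}$, so $o_{n,m}(a_jb)=(a_jb)_{\wt a+\wt b+m-n-2-j}$; on the other hand, extracting the coefficient of $\wx_2^{-i-1}\wx_0^{-j-1}$ in the sum relation \eqref{eq:sum-Y} gives $(a_jb)_{i}=\sum_{s=0}^{T-1}\cty{s}{M}{a,b}{i,j}$ for all $i\in(1/T)\Z$. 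Specializing $i=\wt a+\wt b+m-n-2-j$ and invoking the definition \eqref{eq:zmnm} of $Z_{M,n,m}^{(s)}$ yields the displayed identity. Extending linearly over $P(\wz)=\sum_j c_j\wz^{j}$ then gives $o_{n,m}\bigl(P(\wz)|_{\wz^j=a_jb}\bigr)=\sum_{s=0}^{T-1}Z_{M,n,m}^{(s)}(a,b;P(\wz))$.

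Finally, since $P(\wz)\in\zhuOi{T}{n}{m}(\wt a,\wt b;\wz)=\bigcap_{s=0}^{T-1}\zhuOit{T}{s}{n}{m}(\wt a,\wt b;\wz)$ by \eqref{eq:def-otnm}, in particular $P(\wz)\in\zhuOit{T}{s}{n}{m}(\wt a,\wt b;\wz)$ for every $s$, so Lemma \ref{lemma:O-0-zero-x} forces each summand $Z_{M,n,m}^{(s)}(a,b;P(\wz))$ to vanish on $M(m)$; hence the whole sum is zero. I expect the only real obstacle to be the weight bookkeeping that identifies the single mode picked out by $o_{n,m}$ with the first index $\wt a+\wt b+m-n-2-j$ in $Z_{M,n,m}^{(s)}$, and the recognition that summing the $Z^{(s)}$ over $s$ recovers the ordinary module action via \eqref{eq:sum-Y}; once this translation is in place, Lemma \ref{lemma:O-0-zero-x} does the rest.
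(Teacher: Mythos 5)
Your proof is correct and follows essentially the same route as the paper's: Lemma \ref{lemma:derivation} disposes of the $\zhuOzero{T}{n}{m}(V)$ part, and the identity $o_{n,m}(a_jb)=\sum_{s=0}^{T-1}Z_{M,n,m}^{(s)}(a,b;\wz^j)$ (obtained from \eqref{eq:sum-Y}, the weight bookkeeping, and the definition \eqref{eq:zmnm}) combined with Lemma \ref{lemma:O-0-zero-x} handles the $\zhuOi{T}{n}{m}(V)$ part. The only difference is that you spell out the coefficient extraction and weight computation that the paper leaves implicit.
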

\begin{proof}
Let $a,b$ be homogeneous elements of $V$.
It follows by Lemma \ref{lemma:derivation}
that $o_{n,m}(a_{-2}{\bf 1}+(\wt a+m-n)a)=0$ on $M(m)$.
Let $P(\wz)=\sum_{i\in\Z}\lambda_{i}\wz^i\in \zhuOi{T}{n}{m}(\wt a,\wt b ; \wz)$.
It follows by Lemma \ref{eq:zmnm} that on $M(m)$
\begin{align*}
&o_{n,m}(\sum_{i\in\Z}\lambda_{i}a_ib)
=\sum_{i\in\Z}\lambda_{i}o_{n,m}(a_ib)\\
&=\sum_{s=0}^{T-1}\sum_{i\in\Z}\lambda_{i}\cty{s}{M}{a,b}{\wt a+\wt b+m-n-2-i,i}\\
&=\sum_{s=0}^{T-1}\sum_{i\in\Z}\lambda_{i}Z_{M,n,m}^{(s)}(a,b;\wz^i)\\
&=\sum_{s=0}^{T-1}Z_{M,n,m}^{(s)}(a,b;P(\wz))\\
&=0.
\end{align*}
\end{proof}

\begin{lemma}\label{lemma:hom}
For $a,b\in V$ and $w\in M(m)$
\begin{align*}
o_{n,m}(a*^{T}_{n,p,m}b)w&=o_{n,p}(a)o_{p,m}(b)w.
\end{align*}
\end{lemma}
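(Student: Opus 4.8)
The plan is to turn the left-hand side into a single twisted component $Z^{(\wr)}_{M,n,m}$ with $\wr=\rbk{p}{n}$, and then to match it with the composition of zero modes by a residue computation paralleling the ones in \cite{DJ1,DJ2}. Since $o_{n,m}$ and $*^{T}_{n,p,m}$ are defined on homogeneous elements and extended by linearity, I would fix homogeneous $a,b\in V$ throughout.

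First I would record the dictionary between zero modes and the operators $\cty{s}{M}{a,b}{\cdot,\cdot}$. Extracting the coefficient of $\wx_0^{-j-1}$ in \eqref{eq:sum-Y} gives $(a_jb)_k w=\sum_{s=0}^{T-1}\cty{s}{M}{a,b}{k,j}(w)$ for every $k$ and every $w\in M$. Specializing $k=\wt a+\wt b+m-n-2-j$ and comparing with \eqref{eq:def-onm} and \eqref{eq:zmnm} yields $o_{n,m}(a_jb)=\sum_{s=0}^{T-1}Z^{(s)}_{M,n,m}(a,b;\wz^j)$ on $M(m)$. By linearity in the substitution $\wz^j\mapsto a_jb$, this upgrades to $o_{n,m}(P(\wz)|_{\wz^j=a_jb})=\sum_{s=0}^{T-1}Z^{(s)}_{M,n,m}(a,b;P(\wz))$ on $M(m)$ for an arbitrary Laurent polynomial $P(\wz)$. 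Applying this to $P=\pmul{T}{n}{p}{m}(\wt a,\wt b;\wz)$ and recalling \eqref{eq:multi-ab} rewrites the left-hand side of the lemma as $\sum_{s=0}^{T-1}Z^{(s)}_{M,n,m}(a,b;\pmul{T}{n}{p}{m}(\wt a,\wt b;\wz))$ acting on $w$.

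Next I would collapse this sum. Writing $\wr=\rbk{p}{n}$, the congruence \eqref{eq:unit-qs} gives $\unitmu{T}{\wr}{n}{m}(\wt a,\wt b,i;\wz)\equiv\delta_{\wr,s}\wz^i$ modulo $\zhuOit{T}{s}{n}{m}(\wt a,\wt b;\wz)$, while Lemma \ref{lemma:O-0-zero-x} says $Z^{(s)}_{M,n,m}(a,b;-)$ annihilates $\zhuOit{T}{s}{n}{m}(\wt a,\wt b;\wz)$. Hence $\sum_{s}Z^{(s)}_{M,n,m}(a,b;\unitmu{T}{\wr}{n}{m}(\wt a,\wt b,j;\wz))=Z^{(\wr)}_{M,n,m}(a,b;\wz^j)$. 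Since by \eqref{eq:def-mul-x} the polynomial $\pmul{T}{n}{p}{m}(\wt a,\wt b;\wz)$ is a binomial-weighted residue built entirely from the family $\unitmu{T}{\wr}{n}{m}(\wt a,\wt b,\cdot;\wz)$, linearity reduces the whole expression to $Z^{(\wr)}_{M,n,m}(a,b;\Pi(\wz))$, where $\Pi(\wz)$ is the explicit Laurent polynomial obtained from \eqref{eq:def-mul-x} by replacing each $\unitmu{T}{\wr}{n}{m}(\wt a,\wt b,j;\wz)$ by $\wz^{j}$.

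Finally I would evaluate $Z^{(\wr)}_{M,n,m}(a,b;\Pi(\wz))$ on $M(m)$ directly. By \eqref{eq:zmnm} this is a binomial-weighted combination of the coefficients $\cty{\wr}{M}{a,b}{\cdot,\cdot}$, each of which I would expand using Lemma \ref{lemma:express-ysab}. The grading of $M$ truncates every resulting series, since $a_i$ and $b_i$ raise degree for only finitely many $i$ on a fixed vector; the shifts $\delta(\wr\leq i_1)$ and $\delta(T\leq\wr+i_3)$ in \eqref{eq:def-mul-x} are tuned exactly to the fractional part $\wr/T$ of $p-n$, so that after truncation all contributions cancel except the single mode $a_{\wt a+p-n-1}b_{\wt b+m-p-1}$, which is $o_{n,p}(a)o_{p,m}(b)$ by \eqref{eq:def-onm}. \emph{This last residue and binomial bookkeeping is the main obstacle}: it is the exact analogue of the computations in \cite{DJ1,DJ2}, and its heart is to verify that the coefficients defining $\pmul{T}{n}{p}{m}$ collapse, under the grading of $M$, to this one product of zero modes.
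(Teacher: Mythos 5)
Your proposal is correct and follows the paper's own proof essentially step for step: the paper likewise rewrites $o_{n,m}(a*^{T}_{n,p,m}b)w$ as $\sum_{s=0}^{T-1}Z_{M,n,m}^{(s)}(a,b;\pmul{T}{\nthr}{\ntwo}{\none}(\wt a,\wt b;\wz))(w)$, collapses the sum to the single component $s=\rbk{p}{n}$ using \eqref{eq:unit-qs} and Lemma \ref{lemma:O-0-zero-x}, and is then left with exactly the residue computation you describe. The bookkeeping you flag as the main obstacle is handled in the paper by recasting the collapsed coefficient sum as a double residue against $\ty{\rbk{p}{n}}{M}{a,b}{\wx_2,\wx_1-\wx_2}(w)$ in \eqref{eq:proof-lemma-hom-2} and then invoking \eqref{eq:residue-yayb} together with the grading/binomial collapse of \cite[Lemma 5.1]{DJ2}, which is just the residue-level form of the coefficient expansion you propose via Lemma \ref{lemma:express-ysab}.
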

\begin{proof}
We may assume $a$ and $b$ to be homogeneous elements of $V$.
We simply write $\wr=\rbk{p}{n}$,
which is defined in \eqref{eq:wrij}.
By \eqref{eq:unit-qs} and Lemma \ref{eq:zmnm}, we have
\begin{align}
\label{eq:zmnrab}
&Z_{M,n,m}^{(\wr)}(a,b;\unitmu{T}{\wr}{n}{m}(\wt a,\wt b,j;\wz))(w)\nonumber\\
&=Z_{M,n,m}^{(\wr)}(a,b;z^j)(w)\nonumber\\
&=\cty{\wr}{M}{a,b}{\wt a+\wt b+m-n-2-j,j}
\end{align}
for $j\in\Z$.
We write $\pmul{T}{\nthr}{\ntwo}{\none}(\wt a,\wt b;\wz)=\sum_{i\in\Z}\lambda_{i}\wz^i, \lambda_i\in\C$.
By \eqref{eq:zmnrab}, 
we have
\begin{align}
\label{eq:proof-lemma-hom}
&o_{n,m}(a*^{T}_{n,p,m}b)w
=\sum_{i\in\Z}\lambda_{i}o_{n,m}(a_ib)w\nonumber\\
&=\sum_{s=0}^{T-1}\sum_{i\in\Z}\lambda_{i}\cty{s}{M}{a,b}{\wt a+\wt b+m-n-2-i,i}(w)\nonumber\\
&=\sum_{s=0}^{T-1}Z_{M,n,m}^{(s)}(a,b;\pmul{T}{\nthr}{\ntwo}{\none}(\wt a,\wt b;\wz))(w)\nonumber\\
&=Z_{M,n,m}^{(r)}(a,b;\pmul{T}{\nthr}{\ntwo}{\none}(\wt a,\wt b;\wz))(w)\nonumber\\
&=\sum_{i=0}^{l_2}
\binom{-l_1-l_3+l_2-\delta(\wr\leq i_1)-\delta(T\leq \wr+i_3)}{i}\nonumber\\
&\quad{}\times
\Res_{\wx}\big((1+\wx)^{\wt a-1+l_1+\delta({\wr}\leq i_1)+\wr/T}
\wx^{-l_1-l_3+l_2-\delta(\wr\leq i_1)-\delta(T\leq \wr+i_3)-i}\nonumber\\
&\quad{}\times
\sum_{j\in\Z}
Z_{M,n,m}^{(r)}(a,b;\unitmu{T}{\wr}{\nthr}{\none}(\wt a,\wt b,j;\wz))(w)\wx^{-j-1}\big)\nonumber\\
&=\sum_{i=0}^{l_2}
\binom{-l_1-l_3+l_2-\delta(\wr\leq i_1)-\delta(T\leq \wr+i_3)}{i}\nonumber\\
&\quad{}\times
\Res_{\wx}\big((1+\wx)^{\wt a-1+l_1+\delta({\wr}\leq i_1)+\wr/T}
\wx^{-l_1-l_3+l_2-\delta(\wr\leq i_1)-\delta(T\leq \wr+i_3)-i}
\nonumber\\
&\quad{}\times
\sum_{j\in\Z}
\cty{\wr}{M}{a,b}{\wt a+\wt b+m-n-2-j,j}(w)\wx^{-j-1}\big).
\end{align}
Let $\mu=-l_1-l_3+l_2-\delta(\wr\leq i_1)-\delta(T\leq \wr+i_3)$ and $i\in\Z$.
Then
\begin{align*}
&\Res_{\wx}\big((1+\wx)^{\wt a-1+l_1+\delta({\wr}\leq i_1)+\wr/T}
\wx^{\mu-i}
\nonumber\\
&\quad{}\times
\sum_{j\in\Z}
\cty{\wr}{M}{a,b}{\wt a+\wt b+m-n-2-j,j}(w)\wx^{-j-1}\big)\\
&=
\sum_{k=0}^{\infty}\binom{\wt a-1+l_1+\delta({\wr}\leq i_1)+\wr/T}{k}\\
&\quad{}\times
\cty{\wr}{M}{a,b}{\wt a+\wt b+m-n-2-\mu+i-k,\mu-i+k}(w)\\
&=\sum_{k=0}^{\infty}\binom{\wt a-1+l_1+\delta({\wr}\leq i_1)+\wr/T}{k}\\
&\quad{}\times \Res_{x_2}\Res_{x_1-x_2}\big(x_2^{\wt a+\wt b+m-n-2-\mu+i-k}
(x_1-x_2)^{\mu-i+k}\\
&\quad{}\times \ty{\wr}{M}{a,b}{x_2,x_1-x_2}(w)\big)\\
&=\Res_{x_2}\Res_{x_1-x_2}\big(
x_1^{\wt a-1+l_1+\delta(\wr\leq i_1)+\wr/T}
x_2^{\wt b-1+l_1+\delta(\wr^{\vee}\leq i_1)+r^{\vee}/T-l_2+i-1}\\
&\quad{}\times (x_1-x_2)^{-l_1-l_3+l_2-\delta(\wr\leq i_1)-\delta(T\leq \wr+i_3)-i}
\ty{\wr}{M}{a,b}{x_2,x_1-x_2}(w)\big),
\end{align*}
where we used \eqref{eq:ssvee} in the last step and $\wr^{\vee}$ is defined in \eqref{eq:def-vee}. 
Thus, \eqref{eq:proof-lemma-hom} becomes
\begin{align}
\label{eq:proof-lemma-hom-2}
&o_{n,m}(a*^{T}_{n,p,m}b)w\nonumber\\
&=
\sum_{i=0}^{l_2}
\binom{-l_1-l_3+l_2-\delta(\wr\leq i_1)-\delta(T\leq \wr+i_3)}{i}\nonumber\\
&\quad{}\times\Res_{x_2}\Res_{x_1-x_2}\big(
x_1^{\wt a-1+l_1+\delta(\wr\leq i_1)+\wr/T}
x_2^{\wt b-1+l_1+\delta(\wr^{\vee}\leq i_1)+r^{\vee}/T-l_2+i-1}\nonumber\\
&\quad{}\times (x_1-x_2)^{-l_1-l_3+l_2-\delta(\wr\leq i_1)-\delta(T\leq \wr+i_3)-i} \ty{\wr}{M}{a,b}{x_2,x_1-x_2}(w)\big).
\end{align}
The rest of the proof is the same as that of \cite[Lemma 5.1]{DJ2} by \eqref{eq:residue-yayb}.
\end{proof}

The following result is a direct consequence of Lemma \ref{lemma:hom}.

\begin{corollary}\label{corollary:one-generated}
If $M$ is generated by one homogeneous element $w$ as a $(V,T)$-module,
then $M=\{a_{i}w\ |\ a\in V, i\in (1/T)\Z\}$.
\end{corollary}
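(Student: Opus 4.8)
The plan is to show that the linear span $N=\Span_{\C}\{a_{i}w\mid a\in V,\ i\in(1/T)\Z\}$ is a $(V,T)$-submodule of $M$. Since $Y_{M}({\bf 1},\wx)=\id_{M}$ (Definition \ref{definition:def-vt}(2)) gives $w={\bf 1}_{-1}w\in N$, and $M$ is by hypothesis the smallest $(V,T)$-submodule containing $w$, this forces $N=M$. So everything reduces to checking that $N$ is stable under every mode, i.e. that $a_{i}N\subset N$ for all $a\in V$ and $i\in(1/T)\Z$; once this is known, the truncation in Definition \ref{definition:def-vt}(1) gives $Y_{M}(a,\wx)N\subset N\db{\wx^{1/T}}$, and the remaining axioms for $(N,Y_{M}|_{N})$ are inherited from $M$, so $N$ is a submodule.

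By bilinearity of the modes it suffices to treat a generator $b_{j}w$ of $N$ with $b\in V$ homogeneous and a homogeneous $a\in V$, and to verify $a_{i}(b_{j}w)\in N$. Here I would use the grading essentially. Since $w\in M(m)$ is homogeneous, the element $b_{j}w$ lies in $M(p)$ with $p=\wt b-j-1+m$ (discarding the trivial case $b_{j}w=0$), and by the definition \eqref{eq:def-onm} of $o_{p,m}$ one has $j=\wt b+m-p-1$, so that $b_{j}w=o_{p,m}(b)w$. Likewise $a_{i}(b_{j}w)\in M(n)$ with $n=\wt a-i-1+p$ and $i=\wt a+p-n-1$, so $a_{i}$ acts on $M(p)$ as $o_{n,p}(a)$. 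Hence $a_{i}(b_{j}w)=o_{n,p}(a)\,o_{p,m}(b)\,w$, with all three indices $n,p,m$ in $(1/T)\N$.

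At this point Lemma \ref{lemma:hom} applies directly and gives $o_{n,p}(a)\,o_{p,m}(b)\,w=o_{n,m}(a*^{T}_{n,p,m}b)\,w$. Writing $c=a*^{T}_{n,p,m}b\in V$ and decomposing $c=\sum_{k}c^{(k)}$ with $c^{(k)}\in V_{k}$, the element $o_{n,m}(c)w$ is a finite sum of terms $(c^{(k)})_{k+m-n-1}w$, each a mode of $V$ applied to $w$ and therefore in $N$. Thus $a_{i}(b_{j}w)\in N$, and extending by bilinearity in both arguments yields $a_{i}N\subset N$ for all $a\in V$ and $i\in(1/T)\Z$, which is exactly what was needed.

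The computation is immediate once Lemma \ref{lemma:hom} is invoked, so there is no analytic difficulty; the only point requiring care is the index bookkeeping that matches each raw mode $a_{i}$ (respectively $b_{j}$) with the operator $o_{n,p}(a)$ (respectively $o_{p,m}(b)$) under the grading, so as to put the expression in the form demanded by Lemma \ref{lemma:hom}. A secondary, purely formal, point is to confirm that $(N,Y_{M}|_{N})$ satisfies Definition \ref{definition:def-vt}: this holds because, once $N$ is mode-stable, for $w'\in N$ the associated series $\glb{a,b,w'}{\wx,\wy}$ already has all of its coefficients in $N$, so condition (3) restricts automatically.
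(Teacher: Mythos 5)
Your proposal is correct and follows essentially the same route as the paper: the paper disposes of this corollary in one line, as ``a direct consequence of Lemma \ref{lemma:hom}'', and your argument (showing that $N=\Span_{\C}\{a_{i}w\ |\ a\in V,\ i\in(1/T)\Z\}$ is stable under all modes via $o_{n,p}(a)o_{p,m}(b)w=o_{n,m}(a*^{T}_{n,p,m}b)w$, hence is a $(V,T)$-submodule containing $w={\bf 1}_{-1}w$, hence equals $M$ by minimality) is exactly the routine bookkeeping, including the inheritance of Definition \ref{definition:def-vt}(3) by a mode-stable subspace, that the one-line proof suppresses. The only caveat is that what you prove, and what the paper's argument likewise yields, is the linear-span version $M=\Span_{\C}\{a_{i}w\}$ rather than the literal set equality printed in the corollary; the set equality cannot hold whenever $M$ has nonzero components in more than one coset of $\Z$ in $(1/T)\N$ (as happens for the twisted modules of Example \ref{example:vs3}), since for a single element $d_{i}w$ with $d=\sum_{k}d^{(k)}$, $d^{(k)}\in V_{k}$, all graded components $(d^{(k)})_{i}w\in M(m+k-i-1)$ lie in the single coset $m-i-1+\Z$, so the set notation must be read as a spanning set, exactly as you implicitly do.
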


We define an $\zhualg{T}{n}(V)$-$\zhualg{T}{m}(V)$-bimodule structure on 
$\Hom_{\C}(M(m),M(n))$
by 
\begin{align*}
(afb)(w)&=a(f(bw))
\end{align*}
for $f\in \Hom_{\C}(M(m),M(n))$, $a\in \zhualg{T}{n}(V),b\in \zhualg{T}{m}(V)$ and $w\in M(m)$.
For a $(V,T)$-module  $W$ and $m\in(1/T)\N$, define
\begin{align*}
&\Omega_{m}(W)\\
&=\{w\in W\ |\ a_{\wt a-1+k}w=0
\mbox{
for all homogeneous $a\in V$ 
and $k>m$
}\}.
\end{align*}
Clearly, $\oplus_{i=0}^{m}M(i)\subset \Omega_{m}(M)$.

Lemmas \ref{lemma:O-0-zero} and \ref{lemma:hom} imply the following results.

\begin{lemma}
For $u\in \zhuO{T}{n}{m}(V)$, $o_{n,m}(u)=0$ on $M(m)$.
The linear map $o_{n,m} : V\rightarrow \Hom_{\C}(M(m),M(n))$ induces an $\zhualg{T}{n}(V)$-$\zhualg{T}{m}(V)$-bimodule homomorphism 
from $\zhumod{T}{n}{m}(V)$ to  $\Hom_{\C}(M(m),M(n))$.
\end{lemma}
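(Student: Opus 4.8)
The plan is to establish the two assertions separately, reducing each to Lemmas~\ref{lemma:O-0-zero} and~\ref{lemma:hom}. For the vanishing statement, I would expand $\zhuO{T}{n}{m}(V)$ as the sum of the four subspaces of Definition~\ref{definition:O(V)} and check that $o_{n,m}$ kills each summand on $M(m)$. The piece $\zhuOzero{T}{n}{m}(V)+\zhuOi{T}{n}{m}(V)$ is precisely Lemma~\ref{lemma:O-0-zero}, so nothing further is needed there.

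For $\zhuOii{T}{n}{m}(V)$, a spanning element is $u*^{T}_{n,p_3,m}D$ with $D=(a*^{T}_{p_3,p_2,p_1}b)*^{T}_{p_3,p_1,m}c-a*^{T}_{p_3,p_2,m}(b*^{T}_{p_2,p_1,m}c)$. The key step is to apply Lemma~\ref{lemma:hom} twice to each of the two triple products: the first becomes $o_{p_3,p_2}(a)\,o_{p_2,p_1}(b)\,o_{p_1,m}(c)$ on $M(m)$, and the second gives the identical composition, so $o_{p_3,m}(D)=0$. A final application of Lemma~\ref{lemma:hom} yields $o_{n,m}(u*^{T}_{n,p_3,m}D)=o_{n,p_3}(u)\,o_{p_3,m}(D)=0$. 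For $\zhuOiii{T}{n}{m}(V)$, a spanning element is $(a*^{T}_{n,p_2,p_1}\omega)*^{T}_{n,p_1,m}c$ with $\omega\in\zhuOzero{T}{p_2}{p_1}(V)+\zhuOi{T}{p_2}{p_1}(V)$; two applications of Lemma~\ref{lemma:hom} turn $o_{n,m}$ of this element into $o_{n,p_2}(a)\,o_{p_2,p_1}(\omega)\,o_{p_1,m}(c)$, whose middle factor $o_{p_2,p_1}(\omega)$ vanishes on $M(p_1)$ by Lemma~\ref{lemma:O-0-zero} applied in grading $(p_2,p_1)$. This settles all four summands and hence the first assertion.

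Since $o_{n,m}$ then vanishes on $\zhuO{T}{n}{m}(V)=\ker$, it descends to a well-defined linear map $\zhumod{T}{n}{m}(V)\to\Hom_{\C}(M(m),M(n))$. To check this descent respects the bimodule structures, I would recall from Theorem~\ref{theorem:zhu-algebra} that the left $\zhualg{T}{n}(V)$-action and the right $\zhualg{T}{m}(V)$-action on $\zhumod{T}{n}{m}(V)$ are $*^{T}_{n,n,m}$ and $*^{T}_{n,m,m}$, while on $\Hom_{\C}(M(m),M(n))$ they are $(af)(w)=o_{n,n}(a)(f(w))$ and $(fb)(w)=f(o_{m,m}(b)w)$. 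Taking $p=n$ in Lemma~\ref{lemma:hom} gives $o_{n,m}(a*^{T}_{n,n,m}v)w=o_{n,n}(a)\big(o_{n,m}(v)w\big)$, which is left-linearity, and taking $p=m$ gives $o_{n,m}(v*^{T}_{n,m,m}b)w=o_{n,m}(v)\big(o_{m,m}(b)w\big)$, which is right-linearity.

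The whole argument is bookkeeping once the two lemmas are granted; I expect the only delicate point to be notational, namely keeping the chain of intermediate gradings $n\to p_3\to p_2\to p_1\to m$ consistent so that the telescoping composition of $o$-maps matches the correct instances of Lemma~\ref{lemma:hom}.
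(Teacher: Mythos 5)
Your proposal is correct and is exactly the paper's argument: the paper disposes of this lemma with the single remark that Lemmas \ref{lemma:O-0-zero} and \ref{lemma:hom} imply it, and your write-up just makes explicit the routine bookkeeping (killing each of the four summands of $\zhuO{T}{n}{m}(V)$, then using Lemma \ref{lemma:hom} with $p=n$ and $p=m$ for the bimodule property). The only cosmetic slip is writing $\zhuO{T}{n}{m}(V)=\ker$ where containment in the kernel is all that is needed (and all that is claimed) for the descent.
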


\begin{proposition}\label{proposition:a-acts-omega}
Let $W$ be a $(V,T)$-module.
Then $o : V\rightarrow \End_{\C}(\Omega_{m}(W))$ induces a representation of $\zhualg{T}{m}(V)$. 
In particular, $M(m)$ is a left $\zhualg{T}{m}(V)$-module.
\end{proposition}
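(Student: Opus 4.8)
The plan is to verify the four things a representation demands: (i) every $o(a)=a_{\wt a-1}$ maps $\Omega_m(W)$ into itself; (ii) $o({\bf 1})=\id$; (iii) $o(a*^{T}_{m,m,m}b)=o(a)o(b)$ on $\Omega_m(W)$; and (iv) $o$ annihilates $\zhuO{T}{m}{m}(V)$, so that it descends to $\zhualg{T}{m}(V)$. Item (ii) is immediate from $Y_M({\bf 1},x)=\id_M$.

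The engine for (i) is the honest-mode commutator formula. Summing \eqref{eq:residue-yayb} over $s$ and invoking \eqref{eq:sum-Y} (which recombines the $s$-pieces into $Y_M(Y(c,x_1-x_2)a,x_2)w$) collapses the twisted identity to the classical one
\begin{align*}
[c_i,Y_M(a,x)]=\sum_{p\ge0}\binom{i}{p}x^{i-p}Y_M(c_p a,x).
\end{align*}
Applying $\Res_x x^{\wt a+m-n-1}$ to this, acting on $w$, gives
\begin{align*}
c_i\,o_{n,m}(a)w = o_{n,m}(a)(c_i w) + \sum_{p\ge0}\binom{i}{p}(c_p a)_{\wt a+m-n-1+i-p}w .
\end{align*}
For (i) I take $n=m$: each summand is $(c_p a)_{\wt(c_p a)-1+(i-\wt c+1)}w$, which vanishes for $w\in\Omega_m(W)$ as soon as $i-\wt c+1>m$, while $o(a)(c_i w)=a_{\wt a-1}(c_i w)=0$ because $c_i w=0$ in that same range. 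Hence $c_i\,o(a)w=0$ for all homogeneous $c$ and $i>\wt c-1+m$, i.e. $o(a)w\in\Omega_m(W)$.

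Items (iii) and (iv) reduce to the $\Omega_m(W)$-versions of the already-established Lemmas \ref{lemma:O-0-zero} and \ref{lemma:hom}. The key observation is that the proofs of those lemmas use the grading of $M$ only through the vanishing $a_jw=0$ for $j-(\wt a-1)>m$ (and likewise in the $b$-slot), which is precisely the defining property of $\Omega_m(W)$; thus both proofs transcribe verbatim with $M(m)$ replaced by $\Omega_m(W)$. This yields $o(u)=0$ on $\Omega_m(W)$ for $u\in\zhuOzero{T}{m}{m}(V)+\zhuOi{T}{m}{m}(V)$, and the homomorphism identity $o_{n,m}(a*^{T}_{n,p,m}b)=o_{n,p}(a)o_{p,m}(b)$ whenever the input vector lies in the $\Omega$ matching the right index of the product. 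Item (iii) is the case $n=p=m$, where (i) guarantees that the intermediate vector $o(b)w$ stays in $\Omega_m(W)$ so the composition is legitimate.

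The remaining, and hardest, part is the vanishing of $o$ on $\zhuOii{T}{m}{m}(V)$ and $\zhuOiii{T}{m}{m}(V)$, whose generators carry intermediate gradings $p_1,p_2,p_3$. I would evaluate a generator $u*^{T}_{m,p_3,m}X$ by one application of the transcribed Lemma \ref{lemma:hom}, reducing to $o_{p_3,m}(X)w=0$, and then compute the associativity defect $X$ by iterating the lemma. Associating to the right, every application acts on $w\in\Omega_m(W)$ and produces $o_{p_3,p_2}(a)o_{p_2,p_1}(b)o_{p_1,m}(c)w$; the left-associated expression, however, requires splitting $o_{p_3,p_1}(a*^{T}_{p_3,p_2,p_1}b)$ on the vector $o_{p_1,m}(c)w$, which forces the mixed preservation $o_{p_1,m}(c)\colon\Omega_m(W)\to\Omega_{p_1}(W)$. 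I expect this mixed preservation for $p_1<m$ to be the main obstacle: the commutator computation above now leaves the residual term $o_{p_1,m}(c)(d_i w)$, which is not manifestly zero. Following \cite{DJ1}, I would dispose of it by proving the refined statement that $c_j w\in\Omega_{m+\wt c-j-1}(W)$ for every $w\in\Omega_m(W)$ (with $\Omega_{<0}(W)=0$) by induction on the mode index; granting this, all the required mixed preservations, and therefore the vanishing on $\zhuOii{T}{m}{m}(V)$ and $\zhuOiii{T}{m}{m}(V)$, follow, completing the descent of $o$ to a representation of $\zhualg{T}{m}(V)$.
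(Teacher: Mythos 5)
Your items (ii)--(iv), insofar as they rest on transcribing Lemmas \ref{lemma:O-0-zero} and \ref{lemma:hom} from $M(m)$ to $\Omega_m(W)$, coincide with the paper's proof: the paper's entire justification is that the proofs of those two lemmas (and of Lemma \ref{lemma:O-0-zero-x} behind them) invoke the grading only through the vanishing of modes applied \emph{directly} to $w$, which is the defining property of $\Omega_m(W)$. The genuine gap is your item (i) and everything downstream of it. The ``classical'' commutator formula
\begin{align*}
[c_i,Y_{M}(a,x)]w=\sum_{p\ge 0}\binom{i}{p}x^{i-p}Y_{M}(c_{p}a,x)w
\end{align*}
is \emph{false} for $(V,T)$-modules, and the proposed derivation does not produce it. In \eqref{eq:residue-yayb} the index $s$ is not free: the identity holds only for $j\in s/T+\Z$, so once the single mode $c_i$ is fixed you are locked into the one value of $s$ with $i\in s/T+\Z$, and what you actually obtain (equivalently, from \eqref{eq:Borcherds-Coeff} with $l=0$) is
\begin{align*}
[c_i,a_k]w=\sum_{p\ge 0}\binom{i}{p}\,\cty{s}{M}{c,a}{i+k-p,p}(w).
\end{align*}
The operators $\cty{s}{M}{c,a}{\cdot\,,p}$ are not modes of any element of $V$; only their sum over \emph{all} $s$ equals $(c_pa)_{\cdot}$, by \eqref{eq:sum-Y}, and one cannot sum over $s$ here without replacing $c_i$ by modes lying in the other congruence classes. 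A concrete counterexample: let $M$ be a $g$-twisted $V$-module, which is a $(V,|g|)$-module by Remark \ref{remark:twisted-vt}. By \eqref{eq:twiste-equal} the correct formula is $[c_i,Y_M(a,x)]=\sum_{p\ge0}\binom{i}{p}x^{i-p}Y_M\big((c^{(g,s)})_pa,x\big)$ with $s$ determined by $i$; your formula adds the spurious terms involving $(c^{(g,r)})_pa$ for $r\neq s$, which are nonzero in general.

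Consequently your proof of (i), that $o(a)$ preserves $\Omega_m(W)$, is invalid, and so is the proposed induction establishing the refined statement $c_jw\in\Omega_{m+\wt c-j-1}(W)$ on which you hang the vanishing on $\zhuOii{T}{m}{m}(V)$ and $\zhuOiii{T}{m}{m}(V)$. That induction is exactly the argument of \cite{DLM2,DJ1}, and it works there only because in the untwisted (or $g$-twisted, $g$-homogeneous) setting the commutator returns modes of elements of $V$, whose weights can be tracked; here it returns the operators $\cty{s}{M}{c,a}{\cdot,\cdot}$, and the vanishing you would need for them is equivalent to the very statement being proved, so the argument is circular. This is precisely the difficulty the paper's $s$-componentwise machinery is built around: note that the paper's own proof of the proposition never touches a commutator formula at all, but deduces both the vanishing (Lemma \ref{lemma:O-0-zero}) and the multiplicativity (Lemma \ref{lemma:hom}) from \eqref{eq:Borcherds-Coeff} and \eqref{eq:residue-yayb} one $s$ at a time, in identities where every mode acts directly on $w$, so that only the defining property of $\Omega_m(W)$ enters. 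Any correct completion of your step (i) and of the mixed preservation $o_{p,m}(c)\colon\Omega_m(W)\to\Omega_{p}(W)$ must likewise stay inside that componentwise calculus rather than appeal to the classical formula.
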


For a left $A^{T}_{m}(V)$-module $U$,
set
\begin{align*}
M(U)&=\bigoplus_{n\in(1/T)\N}\zhumod{T}{n}{m}(V)\otimes_{\zhualg{T}{m}(V)}U
\end{align*}
and $M(U)(n)=\zhumod{T}{n}{m}(V)\otimes_{\zhualg{T}{m}(V)}U$ for every $n\in (1/T)\N$.
For homogeneous $a\in V$ and $i\in(1/T)\Z$, define an operator $a_{i}$ from $M(U)(n)$ to $M(U)(n+\wt a-i-1)$ by
\begin{align}
\label{eq:def-action-M}
a_i(b\otimes u)&=
\left\{
\begin{array}{ll}
(a*^{T}_{n+\wt a-i-1,n,m}b)\otimes u&\mbox{if }n+\wt a-i-1\geq 0,\\
0&\mbox{if }n+\wt a-i-1<0
\end{array}\right.
\end{align}
for $b\otimes u\in M(U)(n)$ with $b\in V$ and $u\in U$. 
This operation is well-defined (cf. \cite[p.815]{DJ1}).
We extend $a_i$ for an arbitrary $a\in V$ by linearity and set
\begin{align*}
Y_{M(U)}(a,\wx)&=\sum_{i\in(1/T)\Z}a_{i}\wx^{-i-1}:
M(U)\rightarrow M(U)\db{\wx^{1/T}}.
\end{align*}

We shall show $(M(U),Y_{M(U)})$ is a $(1/T)\N$-graded $(V,T)$-module.
For homogeneous $a,b\in V$, $\ws\in\Z$ with $0\leq \ws\leq T-1$, $i\in(1/T)\Z$ and $j\in\Z$,
define a linear map 
$\cty{\ws}{M(U)}{a,b}{i,j} : M(U)(n)\rightarrow M(U)(\wt a+\wt b-i-j-2+n)$
by
\begin{align}
\label{eq:def-ymsabxx}
&\cty{\ws}{M(U)}{a,b}{i,j}(c\otimes u)\nonumber\\
&=
(\hunitmu{T}{\ws}{\wt a+\wt b-i-j-2+n}{n}(a,b,j)*^{T}_{\wt a+\wt b-i-j-2+n,n,m}c)\otimes u
\end{align}
for $c\otimes u\in M(U)(n)$ with $c\in V$ and $u\in U$.
This operation is also well-defined.
We extend $\cty{\ws}{M(U)}{a,b}{i,j}$ 
for arbitrary $a,b\in V$ by linearity
and set
\begin{align*}
\ty{\ws}{M(U)}{a,b}{\wx_2,\wx_0}
&=
\sum_{i\in(1/T)\Z}
\sum_{j\in\Z}
\cty{\ws}{M(U)}{a,b}{i,j}\wx_2^{-i-1}\wx_0^{-j-1}.
\end{align*}
It follows by \eqref{eq:unit-zero} and \eqref{eq:def-unit-ab}
that $\ty{\ws}{M(U)}{a,b}{\wx_2,\wx_0}$ is a linear map from $M(U)$ to $M(U)\db{x_2^{1/T}}\db{x_0}$.

{From} now on, we simply write $M=M(U)$.
By \eqref{eq:sum-unit-s} we have
\begin{align*}
&\sum_{s=0}^{T-1}
Y_{M}^{(\ws)}(a,b;i,j)(c\otimes u)\\
&=\sum_{s=0}^{T-1}(\hunitmu{T}{\ws}{\wt a+\wt b-i-j-2+n}{n}(a,b,j)*^{T}_{\wt a+\wt b-i-j-2+n,n,m}c)\otimes u\\
&=((a_jb)*^{T}_{\wt a+\wt b-i-j-2+n,n,m}c)\otimes u\\
&=(a_jb)_{i}(c\otimes u)
\end{align*}
for homogeneous $a,b\in V$ and $c\otimes u\in M(n)$ with $c\in V$ and $u\in U$.
Thus
\begin{align}
\label{eq:universal-sum}
\sum_{\ws=0}^{T-1}\ty{\ws}{M}{a,b}{x_2,x_0}(w)&=Y_{M}(Y(a,x_0)b,x_2)w
\end{align}
for $w\in M$.

\begin{lemma}\label{lemma:truncated-id}
\begin{enumerate}
\item
$a_{i}M(n)=0$ for homogeneous $a\in V$ and $i>\wt a-1+n$.
\item
$Y_{M}({\bf 1},\wx)=\id_{M}$.
\end{enumerate}
\end{lemma}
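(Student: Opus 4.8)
The plan is to read off both statements directly from the definition \eqref{eq:def-action-M} of the operators $a_i$ on $M=M(U)$, invoking \eqref{eq:identity} for the second part; both assertions are purely a matter of tracking degrees, so no substantial computation is needed.

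For part (1), I would note that $a_i$ carries the homogeneous piece $M(n)=\zhumod{T}{n}{m}(V)\otimes_{\zhualg{T}{m}(V)}U$ into $M(n+\wt a-i-1)$, and that the hypothesis $i>\wt a-1+n$ is exactly the inequality $n+\wt a-i-1<0$. Hence for every simple tensor $b\otimes u\in M(n)$ the second branch of \eqref{eq:def-action-M} applies, giving $a_i(b\otimes u)=0$; since such tensors span $M(n)$, we conclude $a_iM(n)=0$.

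For part (2), since ${\bf 1}\in V_0$ we have $\wt{\bf 1}=0$, so ${\bf 1}_i$ sends $M(n)$ into $M(n-i-1)$. For $b\otimes u\in M(n)$ with $n-i-1\geq 0$, the first branch of \eqref{eq:def-action-M} combined with \eqref{eq:identity} yields
\begin{align*}
{\bf 1}_i(b\otimes u)&=({\bf 1}*^{T}_{n-i-1,n,m}b)\otimes u=\delta_{n-i-1,n}\,b\otimes u=\delta_{i,-1}\,b\otimes u,
\end{align*}
while for $n-i-1<0$ the second branch gives ${\bf 1}_i(b\otimes u)=0$. Thus ${\bf 1}_i=\delta_{i,-1}\id_M$ on each $M(n)$ and hence on all of $M$, so that $Y_M({\bf 1},\wx)=\sum_{i\in(1/T)\Z}{\bf 1}_i\wx^{-i-1}=\id_M$.

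The one point I would check explicitly is the interplay between the degree condition $n-i-1\geq 0$ and the Kronecker delta appearing above: whenever $\delta_{i,-1}\neq 0$, i.e.\ $i=-1$, we have $n-i-1=n\geq 0$, so the nonzero (first) branch of \eqref{eq:def-action-M} is indeed the operative one, and conversely when $n-i-1<0$ necessarily $i\neq -1$ (as $n\geq0$). Beyond this bookkeeping there is no genuine obstacle, the entire content being already encoded in \eqref{eq:def-action-M} and \eqref{eq:identity}.
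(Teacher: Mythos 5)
Your proof is correct and follows essentially the same route as the paper: part (1) is read off from the definition \eqref{eq:def-action-M}, and part (2) reduces to the computation ${\bf 1}_i(b\otimes u)=({\bf 1}*^{T}_{n-i-1,n,m}b)\otimes u=\delta_{i,-1}\,b\otimes u$, where your citation of \eqref{eq:identity} is equivalent to the paper's citation of Lemma \ref{lemma:1-poly}, since \eqref{eq:identity} is derived from that lemma. Your explicit check that the vanishing branch of \eqref{eq:def-action-M} is consistent with the Kronecker delta is a small point the paper leaves implicit, but it changes nothing of substance.
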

\begin{proof}
Clearly, (1) holds.
Let $a\otimes u\in M(n)$ with $a\in V$ and $c\in U$.
By Lemma \ref{lemma:1-poly}, we have
\begin{align*}
{\bf 1}_{i}(a\otimes u)&=({\bf 1}*^{T}_{-i-1+n,n,m}a)\otimes u\\
&=\delta_{i,-1}({\bf 1}_{-1}a)\otimes u
= \delta_{i,-1}a\otimes u
\end{align*}
for $i\in(1/T)\Z$.
\end{proof}

\begin{lemma}\label{lemma:commutative}
Let $a,b$ be homogeneous elements of $V$,
$i,j\in(1/T)\Z$ and
$r$ the integer uniquely determined by the conditions
 $0\leq r\leq T-1$ and $r/T\equiv i\pmod{\Z}$.
Then
\begin{align*}
[a_i,b_j]w&=\sum_{k=0}^{\infty}\binom{i}{k}\cty{r}{M}{a,b}{i+j-k,k}(w)
\end{align*}
for $w\in M$. In particular,
\begin{align*}
(x_1-x_2)^{l}[Y_{M}(a,x_1),Y_{M}(b,x_2)]&=0
\end{align*}
for $l\in\Z_{\geq\max\{\wt a+\wt b-\lw,0\}}$.
\end{lemma}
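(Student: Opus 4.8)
The plan is to prove the coefficient identity directly on $M=M(U)$ by reducing it to the commutator formula in $V$ (Lemma~\ref{lemma:ab-ba-V}), and then to derive locality from it by a standard delta-function manipulation. By bilinearity it suffices to treat homogeneous $a,b$ and $w=c\otimes u\in M(n)$ with $c\in V$, $u\in U$. Set $n'=n+\wt b-j-1$, $\tilde n=n+\wt a-i-1$ and $n''=n+\wt a+\wt b-i-j-2$, so that $n''=n'+\wt a-i-1=\tilde n+\wt b-j-1$. From \eqref{eq:def-action-M} one has $a_ib_j(c\otimes u)=(a*^{T}_{n'',n',m}(b*^{T}_{n',n,m}c))\otimes u$ and $b_ja_i(c\otimes u)=(b*^{T}_{n'',\tilde n,m}(a*^{T}_{\tilde n,n,m}c))\otimes u$, both summands being understood to vanish when an intermediate grade is negative; in particular both sides of the asserted identity vanish when $n''<0$, so I may assume $n''\ge0$.

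The first step is to use the associativity relations spanning $\zhuOii{T}{n''}{m}(V)$, which are $0$ in $\zhumod{T}{n''}{m}(V)$, to rewrite both products as left multiplications on $c$: $a*^{T}_{n'',n',m}(b*^{T}_{n',n,m}c)\equiv(a*^{T}_{n'',n',n}b)*^{T}_{n'',n,m}c$ and likewise for the other. Thus, modulo $\zhuO{T}{n''}{m}(V)$, the commutator equals $\big((a*^{T}_{n'',n',n}b-b*^{T}_{n'',\tilde n,n}a)*^{T}_{n'',n,m}c\big)\otimes u$. Now I apply Lemma~\ref{lemma:ab-ba-V} with $(n'',n',n)$ in place of its $(n,p,m)$; the three numerical checks $n+n''-n'=\tilde n$, $\wt a-1+n'-n''=i$ and $\rbk{n'}{n''}=r$ turn it into $a*^{T}_{n'',n',n}b-b*^{T}_{n'',\tilde n,n}a\equiv\Res_{\wx}(1+\wx)^{i}\sum_{k}\hunitmu{T}{r}{n''}{n}(a,b,k)\wx^{-k-1}\pmod{\zhuOzero{T}{n''}{n}(V)+\zhuOi{T}{n''}{n}(V)}$. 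Right multiplication by $*^{T}_{n'',n,m}c$ sends the error term into $(\zhuOzero{T}{n''}{n}(V)+\zhuOi{T}{n''}{n}(V))*^{T}_{n'',n,m}V\subset\zhuOiii{T}{n''}{m}(V)$ (taking the left factor ${\bf 1}$ and using ${\bf 1}*^{T}_{n'',n'',n}x=x$ from \eqref{eq:identity}), hence it dies in $M(n'')$. Expanding $(1+\wx)^{i}=\sum_{k\ge0}\binom{i}{k}\wx^{k}$ and taking the residue leaves $\sum_{k\ge0}\binom{i}{k}\hunitmu{T}{r}{n''}{n}(a,b,k)$; since the first index in \eqref{eq:def-ymsabxx} evaluates to $\wt a+\wt b-(i+j-k)-k-2+n=n''$, this is precisely $\sum_{k\ge0}\binom{i}{k}\cty{r}{M}{a,b}{i+j-k,k}(c\otimes u)$, which is the claimed formula.

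For the ``in particular'' part I would proceed as follows. By \eqref{eq:unit-zero} and \eqref{eq:def-ymsabxx}, $\cty{s}{M}{a,b}{I,k}=0$ for $k\ge\wt a+\wt b-\lw$; put $N=\max\{\wt a+\wt b-\lw,0\}$. Grouping the coefficient formula over the modes $i\in s/T+\Z$ of a fixed residue $s$ and reindexing gives $[Y^{s}_{M}(a,x_1),Y_{M}(b,x_2)]w=\sum_{k=0}^{N-1}\tfrac{1}{k!}\big(\tfrac{\partial^{k}}{\partial x_2^{k}}D_s(x_1,x_2)\big)\,C^{s}_{k}(x_2)$, where $D_s(x_1,x_2)=\sum_{i\in s/T+\Z}x_1^{-i-1}x_2^{i}$ and $C^{s}_{k}(x_2)=\sum_{I}\cty{s}{M}{a,b}{I,k}(w)\,x_2^{-I-1}\in M\db{x_2^{1/T}}$. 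Since $(x_1-x_2)D_s=0$, induction gives $(x_1-x_2)^{k+1}\tfrac{\partial^{k}}{\partial x_2^{k}}D_s=0$; as $k+1\le N\le l$, every summand is annihilated by $(x_1-x_2)^{l}$, and summing over $s$ yields $(x_1-x_2)^{l}[Y_{M}(a,x_1),Y_{M}(b,x_2)]=0$.

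I expect the main obstacle to be the index bookkeeping of the first two paragraphs: matching the triple $(n'',n',n)$ to Lemma~\ref{lemma:ab-ba-V}, verifying the three numerical identities, and checking that the error term lands in $\zhuOiii{T}{n''}{m}(V)$ at the correct grades rather than in some mismatched $\zhuO{T}{\cdot}{\cdot}(V)$. A related nuisance is the boundary behaviour when an intermediate grade $n'$ or $\tilde n$ is negative while $n''\ge0$: there the absent product is exactly the one that would factor through a vanishing graded piece, so the formula persists, but this must be reconciled with the grading conventions of \eqref{eq:def-action-M} as in \cite{DJ1}. Once these points are settled, the delta-function computation giving locality is routine.
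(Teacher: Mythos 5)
Your proof is correct and is essentially the paper's own argument: reduce to $c\otimes u\in M(n)$, use associativity modulo $\zhuOii{T}{n''}{m}(V)$ to rewrite the commutator as $\big((a*^{T}_{n'',n',n}b-b*^{T}_{n'',\tilde n,n}a)*^{T}_{n'',n,m}c\big)\otimes u$, apply Lemma \ref{lemma:ab-ba-V} with exactly the index identifications you verify, and expand the residue $\Res_{\wx}(1+\wx)^{i}$ to land on the coefficient operators of \eqref{eq:def-ymsabxx}. For the locality claim the paper simply cites Remark \ref{remark:bound} and \cite[Remark 3.1.13]{LL}; your delta-function computation is that standard argument written out, so the two proofs coincide.
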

\begin{proof}
Let $c\otimes u\in M(n)$ with $c\in V$ and $u\in U$. 
By Lemma \ref{lemma:ab-ba-V}, we have
\begin{align*}
&a_{i}b_{j}(c\otimes u)-
b_{j}a_{i}(c\otimes u)\\
&=(a*^{T}_{\wt a+\wt b-i-j-2+n, \wt b-1-j+n,m}(b*^{T}_{\wt b-1-j+n,n,m}c))\otimes u\\
&\quad{}-(b*^{T}_{\wt a+\wt b-i-j-2+n, \wt a-1-i+n,m}(a*^{T}_{\wt a-1-i+n,n,m}c))\otimes u\\
&=((a*^{T}_{\wt a+\wt b-i-j-2+n, \wt b-1-j+n,n}b)*^{T}_{\wt a+\wt b-i-j-2+n,n,m}c)\otimes u\\
&\quad{}-((b*^{T}_{\wt a+\wt b-i-j-2+n, \wt a-1-i+n,n}a)*^{T}_{\wt a+\wt b-i-j-2+n,n,m}c)\otimes u\\
&=(\Res_{\wx}(1+\wx)^{i}
(\sum_{p\in\Z}\hunitmu{T}{\wr}{\wt a+\wt b-i-j-2+n}{n}(a,b,p)x^{-p-1})*^{T}_{\wt a+\wt b-i-j-2+n,n,m}c)\otimes u\\
&=\big(\sum_{k=0}^{\infty}\binom{i}{k}
\hunitmu{T}{\wr}{\wt a+\wt b-i-j-2+n}{n}(a,b,k)*^{T}_{\wt a+\wt b-i-j-2+n,n,m}c\big)\otimes u\\
&=\sum_{k=0}^{\infty}\binom{i}{k}\cty{r}{M}{a,b}{i+j-k,k}(c\otimes u).
\end{align*}
The last formula follows from this and Remark \ref{remark:bound} (cf. \cite[Remark 3.1.13]{LL}).
\end{proof}

We recall that $Y^{\wr}_{M}(a,\wx)$ denotes $\sum_{i\in \wr/T+\Z}a_i\wx^{-i-1}$ for $a\in V$ (cf. \eqref{eq:Ys}).

\begin{lemma}\label{lemma:plus-l}
Let $a,b\in V$ with $a$ being homogeneous,
$l,r\in\N$ with $0\leq r\leq T-1$ and 
$n=l_3+i_3/T\in(1/T)\N$ with $l_3,i_3\in\N$ and $0\leq i_3\leq T-1$.
Then
\begin{align*}
&\Res_{\wx_0}\wx_0^{l}(\wx_2+\wx_0)^{\wt a-1+l_3+\delta(r\leq i_3)+r/T}\ty{r}{M}{a,b}{\wx_2,\wx_0}(w)\\
&
=\Res_{\wx_0}\wx_0^{l}(\wx_0+\wx_2)^{\wt a-1+l_3+\delta(r\leq i_3)+r/T}Y_{M}^{r}(a,\wx_0+\wx_2)Y_{M}(b,\wx_2)w
\end{align*}
for $w\in M(n)$. 
\end{lemma}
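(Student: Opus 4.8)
The plan is to prove the identity by a direct computation in $M=M(U)$ starting from the defining formulas \eqref{eq:def-action-M} and \eqref{eq:def-ymsabxx}, recognizing it as the associativity half of the Borcherds-type relation \eqref{eq:Borcherds-associative} for the constructed module. Since every term is bilinear in $a,b$ and linear in $w$, I reduce to homogeneous $a,b$ and a single tensor $w=c\otimes u\in M(U)(n)$ with $c\in V$ and $u\in U$. Writing $\alpha=\wt a-1+l_3+\delta(r\leq i_3)+r/T$, I expand $(\wx_2+\wx_0)^{\alpha}$ on the left via $\iota_{\wx_2,\wx_0}$ and, on the right, each factor $(\wx_0+\wx_2)^{\alpha-i-1}$ coming from a mode $a_i$ with $i\in r/T+\Z$ via $\iota_{\wx_0,\wx_2}$; note that $\alpha-i-1\in\Z$ because $\alpha-r/T\in\Z$, so after applying $\Res_{\wx_0}\wx_0^{l}$ both sides are genuine elements of $M\db{\wx_2^{1/T}}$. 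These are exactly the two expansion conventions occurring in \eqref{eq:Borcherds-associative}.

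First I would expand the right-hand side. From $Y^{r}_{M}(a,\wx_0+\wx_2)Y_{M}(b,\wx_2)w=\sum_{i\in r/T+\Z}\sum_{j\in\Z}a_{i}(b_{j}w)(\wx_0+\wx_2)^{-i-1}\wx_2^{-j-1}$ and two applications of \eqref{eq:def-action-M} I get $a_{i}(b_{j}w)=(a*^{T}_{P,\wt b-1-j+n,m}(b*^{T}_{\wt b-1-j+n,n,m}c))\otimes u$ with $P=\wt a+\wt b-i-j-2+n$. Because $M(U)$ is well defined precisely by $\zhuOii{T}{\cdot}{\cdot}(V)\subset \zhuO{T}{\cdot}{\cdot}(V)$ (Definition \ref{definition:O(V)} and Lemma \ref{lemma:ass-O}), the two bracketings of the triple product agree in $M$, so I may rewrite this as $((a*^{T}_{P,\wt b-1-j+n,n}b)*^{T}_{P,n,m}c)\otimes u$, exposing the product $a*^{T}_{P,\wt b-1-j+n,n}b=\pmul{T}{P}{\wt b-1-j+n}{n}(\wt a,\wt b;\wz)|_{\wz^k=a_kb}$.

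Next I would expand the left-hand side using \eqref{eq:def-ymsabxx}, where the coefficient operators are $\cty{r}{M}{a,b}{i,j}(c\otimes u)=(\hunitmu{T}{r}{P}{n}(a,b,j)*^{T}_{P,n,m}c)\otimes u$ with the same $P=\wt a+\wt b-i-j-2+n$. Multiplying $\ty{r}{M}{a,b}{\wx_2,\wx_0}(w)$ by $\iota_{\wx_2,\wx_0}(\wx_2+\wx_0)^{\alpha}$ and taking $\Res_{\wx_0}\wx_0^{l}$ therefore also yields an element of the form $(\,\cdot\,*^{T}_{P,n,m}c)\otimes u$, whose first factor is a binomial–residue combination of the $\hunitmu{T}{r}{P}{n}(a,b,j)$. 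It then suffices to match, for each fixed $P$, the first factors in $V$; and this matching is exactly the content of the definition \eqref{eq:def-mul-x} of $\pmul{T}{P}{\cdot}{n}$ as a binomial-weighted residue sum of the $\unitmu{T}{r}{P}{n}(\wt a,\wt b,j;\wz)$, specialized by $\wz^k=a_kb$. Concretely this is the computation in the proof of Lemma \ref{lemma:hom} read in reverse, combined with the elementary residue identity \eqref{eq:uniform}.

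The step I expect to be the main obstacle is this last coefficient matching: the grading subscript $P=\wt a+\wt b-i-j-2+n$ varies with the summation indices $i,j$, so I must verify that the binomial factors together with the shifts $l_3$, $\delta(r\leq i_3)$ and the half-integer part $r/T$ entering both $\alpha$ and the $(1+\wx)$-exponent of \eqref{eq:def-mul-x} reorganize consistently once $\Res_{\wx_0}\wx_0^{l}$ is applied, and in particular that $r/T$ is transported correctly across the two different expansion conventions $\iota_{\wx_2,\wx_0}$ and $\iota_{\wx_0,\wx_2}$. Once this bookkeeping is carried out — the standard associativity computation specialized to the present grading, parallel to \cite[Section 4]{DJ1} and \cite[Lemma 5.1]{DJ2} — the two sides coincide and the lemma follows.
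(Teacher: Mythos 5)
Your plan is a direct-expansion argument, and it runs into a genuine gap at exactly the step you flag as "bookkeeping". After unfolding \eqref{eq:def-action-M}, \eqref{eq:def-ymsabxx} and the associativity modulo $\zhuOii{T}{\cdot}{\cdot}(V)$ (that part is fine, via Lemma \ref{lemma:ass-O}), the coefficient of each power $x_2^{-q-1}$ on the right-hand side is an alternating sum $\sum_{k=0}^{l}(-1)^{k}\binom{l}{k}\bigl((a*^{T}_{P,j'_k,n}b)*^{T}_{P,n,m}c\bigr)\otimes u$, where the middle index $j'_k=\wt b-1-q-k+n$ moves with $k$, while on the left-hand side it is $\sum_{k\geq 0}\binom{\alpha}{k}\bigl(\hunitmu{T}{r}{P}{n}(a,b,k+l)*^{T}_{P,n,m}c\bigr)\otimes u$ with $\alpha=\wt a-1+l_3+\delta(r\leq i_3)+r/T$. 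Matching these is \emph{not} "the content of the definition \eqref{eq:def-mul-x}": each $a*^{T}_{P,j'_k,n}b$ unfolds by \eqref{eq:def-mul-x} into its own binomial-weighted residue of $\sum_{j}\hunitmu{T}{r}{P}{n}(a,b,j)x^{-j-1}$, and the resulting double sum must then be collapsed by a binomial identity (schematically $\sum_{k=0}^{l}(-1)^{k}\binom{l}{k}(1+x)^{l-k}=x^{l}$, with the summation ranges dictated by \eqref{eq:def-mul-x} checked to be long enough). This is precisely the analogue of step \eqref{eq:asso-minus-3} in the paper's proof of the companion Lemma \ref{lemma:minus-l}, where it requires \cite[Proposition 5.3]{DJ1}; you supply no such identity. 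Moreover, the two tools you cite in its place cannot fill the hole: \eqref{eq:uniform} plays no role here, and "the proof of Lemma \ref{lemma:hom} read in reverse" is circular, since that proof rests on \eqref{eq:residue-yayb}, i.e.\ on the Borcherds-type identity of a module already known to be a $(V,T)$-module, which is exactly what Lemmas \ref{lemma:plus-l}--\ref{lemma:associative} are in the process of establishing for $M(U)$.

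The missing idea --- and the reason the paper separates the $x_0^{l}$ and $x_0^{-l}$ cases --- is that for $l\geq 0$ no resummation is needed at all. The paper's proof invokes Lemma \ref{lemma:commutative}: writing the right-hand side as $\Res_{x_1}(x_1-x_2)^{l}x_1^{\alpha}Y^{r}_{M}(a,x_1)Y_{M}(b,x_2)w$, the reversed-order term $\Res_{x_1}(x_1-x_2)^{l}x_1^{\alpha}Y_{M}(b,x_2)Y^{r}_{M}(a,x_1)w$ vanishes, because $(x_1-x_2)^{l}$ is a genuine polynomial and the exponent $\alpha$ is calibrated so that, by Lemma \ref{lemma:truncated-id} (1), $x_1^{\alpha}Y^{r}_{M}(a,x_1)w$ has only nonnegative integer powers of $x_1$ on $M(n)$; the surviving commutator is then converted term by term into $\Res_{x_0}x_0^{l}(x_2+x_0)^{\alpha}\ty{r}{M}{a,b}{x_2,x_0}(w)$ by the commutator formula of Lemma \ref{lemma:commutative} (this is the computation of \cite[Lemma 5.9]{DJ2} that the paper cites). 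So either adopt that mechanism, or, if you want to keep your direct route (which mirrors the paper's treatment of Lemma \ref{lemma:minus-l}), you must actually state and prove the binomial collapse above, including verification of the summation bounds coming from \eqref{eq:def-mul-x}; as written, your proof asserts its central step rather than proving it.
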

\begin{proof}
Using Lemma \ref{lemma:commutative},
we obtain the formula by
the same computation as in the proof of
\cite[Lemma 5.9]{DJ2}.
\end{proof}

\begin{lemma}\label{lemma:minus-l}
Let $a,b\in V$ with $a$ being homogeneous,
$r\in\N$ with $0\leq r\leq T-1$ and 
$n=l_3+i_3/T\in(1/T)\N$ with $l_3\in\N$ and $0\leq i_3\leq T-1$.
Then
\begin{align*}
&\Res_{\wx_0}\wx_0^{-l}(\wx_2+\wx_0)^{\wt a-1+l_3+\delta(r\leq i_3)+r/T}\ty{r}{M}{a,b}{\wx_2,\wx_0}(w)\\
&
=\Res_{\wx_0}\wx_0^{-l}(\wx_0+\wx_2)^{\wt a-1+l_3+\delta(r\leq i_3)+r/T}Y_{M}^{r}(a,\wx_0+\wx_2)Y_{M}(b,\wx_2)w
\end{align*}
for $w\in M(n)$. 
\end{lemma}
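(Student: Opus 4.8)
The plan is to treat this as the exact companion of Lemma \ref{lemma:plus-l}: the two lemmas cover the negative and the non-negative powers of $x_0$ respectively, and together they constitute the per-component relation \eqref{eq:Y-abw} for $M = M(U)$, of which the commutator formula Lemma \ref{lemma:commutative} is only the $\Res_{x_0}$ (i.e.\ the $x_0^0$) slice. By bilinearity in $a,b$ and linearity it suffices to take $w = c\otimes u$ with $c\in V$, $u\in U$, and to unfold both sides through the product $*^{T}$ by the defining formulas: the left-hand side is computed from \eqref{eq:def-ymsabxx}, so that $\ty{r}{M}{a,b}{x_2,x_0}(w)$ is expressed through the polynomials $\hat{E}^{(T;r)}$ of \eqref{eq:def-unit-ab}, while the right-hand side is computed from \eqref{eq:def-action-M} as the iterated product $a*^{T}(b*^{T}c)\otimes u$.

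First I would substitute $x_1 = x_0+x_2$, so that $\Res_{x_0}x_0^{-l}(\cdots)$ becomes $\Res_{x_1}$ against the kernel $x_1^{E}(x_1-x_2)^{-l}$ with $E = \wt a - 1 + l_3 + \delta(r\leq i_3) + r/T$; the two sides then differ only in whether this kernel is expanded as $\iota_{x_1,x_2}$ or as $\iota_{x_2,x_1}$. Next I would use Lemma \ref{lemma:commutative} to pass between the composition $Y_M^r(a,x_1)Y_M(b,x_2)w$ and the opposite composition $Y_M(b,x_2)Y_M^r(a,x_1)w$, and translate the resulting equality into a congruence of Laurent polynomials modulo $\zhuOi{T}{n}{m}(\wt a,\wt b;z)$, where it reduces to the commutator polynomial identity Lemma \ref{lemma:comm-x} together with the associativity encoded in $\zhuOii{T}{n}{m}(V)$. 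The decisive feature special to the $x_0^{-l}$ (i.e.\ $l\geq 1$) case is that $\iota_{x_2,x_1}(x_1^{E}(x_1-x_2)^{-l})$ involves only powers $x_1^{e}$ with $e\geq E$; a one-line case check on $\delta(r\leq i_3)$ gives $E > \wt a - 1 + n$, so the truncation Lemma \ref{lemma:truncated-id}(1) kills the opposite composition on $w\in M(n)$, and what survives reproduces exactly $\ty{r}{M}{a,b}{x_2,x_0}(w)$. This is the same computation as in the proof of Lemma \ref{lemma:plus-l}, run with $x_0^{-l}$ in place of $x_0^{l}$.

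The hard part will be the bookkeeping rather than any single estimate: one must keep the fractional exponent $r/T$ and the two $\delta$-symbols aligned through the $\iota_{x_1,x_2}$- and $\iota_{x_2,x_1}$-expansions and through the residue substitution $x_1 = x_0 + x_2$, and one must avoid quoting the full identity \eqref{eq:Y-abw}, which is not yet available for $M(U)$ --- only its $x_0^0$-slice (Lemma \ref{lemma:commutative}) is. For this reason the step from the operator identity to the conclusion has to be carried out through the explicit product definitions and the polynomial lemmas of Section \ref{section:subspace}, and the genuinely load-bearing point is the inequality $E > \wt a - 1 + n$ that makes the unwanted composition drop out.
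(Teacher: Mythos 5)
Your preliminary reductions are fine, and your vanishing observation is correct as a fact: $\iota_{\wx_2,\wx_1}\bigl(\wx_1^{E}(\wx_1-\wx_2)^{-l}\bigr)$ involves only the powers $\wx_1^{E+i}$, $i\geq 0$, the inequality $E=\wt a-1+l_3+\delta(r\leq i_3)+r/T>\wt a-1+n$ holds in both cases of $\delta(r\leq i_3)$, and Lemma \ref{lemma:truncated-id}(1) then kills $\Res_{\wx_1}\iota_{\wx_2,\wx_1}(\cdots)\,Y_{M}(b,\wx_2)Y^{r}_{M}(a,\wx_1)w$. But this step does not advance the proof: it only shows that the lemma is \emph{equivalent} to the instance of \eqref{eq:residue-yayb} for the kernel $\wx_1^{E}(\wx_1-\wx_2)^{-l}$, and that instance is exactly what is not yet available for $M(U)$. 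The genuine gap is your central claim that the remaining identity follows from Lemma \ref{lemma:commutative} by ``the same computation as Lemma \ref{lemma:plus-l}, run with $\wx_0^{-l}$ in place of $\wx_0^{l}$.'' For $l\geq 0$ the kernel $\wx_1^{E}(\wx_1-\wx_2)^{l}$ has a single expansion, so the commutator formula can be paired against it; that is precisely why the paper proves Lemma \ref{lemma:plus-l} this way (following \cite[Lemma 5.9]{DJ2}). For negative powers the two compositions must be paired against the two \emph{different} expansions $\iota_{\wx_1,\wx_2}$ and $\iota_{\wx_2,\wx_1}$ of the same kernel, and the commutator formula no longer transfers one side into the other. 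Concretely, after your vanishing step what must be shown is $\sum_{i\geq 0}\binom{-l}{i}(-1)^{i}\wx_2^{i}\,a_{E-l-i}\,Y_{M}(b,\wx_2)w=\Res_{\wx_0}\wx_0^{-l}(\wx_2+\wx_0)^{E}\,\ty{r}{M}{a,b}{\wx_2,\wx_0}(w)$; applying Lemma \ref{lemma:commutative} to each $a_{E-l-i}b_{k}w$ produces, besides the wanted $\cty{r}{M}{a,b}{\cdot,\cdot}(w)$ terms, the leftover terms $b_{k}a_{E-l-i}w$, and the modes $a_{E-l-i}$ with $i\geq 0$ are creation-type modes that truncation cannot remove. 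No argument whose only module-level inputs are Lemma \ref{lemma:commutative} and Lemma \ref{lemma:truncated-id} can close this up: Lemmas \ref{lemma:plus-l} and \ref{lemma:minus-l} together constitute weak associativity, i.e.\ condition \eqref{eq:Borcherds-associative} of Lemma \ref{lemma:Borcherds}, and commutativity plus truncation does not imply associativity.

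What the paper actually does for the negative powers --- and what your passing reference to ``associativity encoded in $\zhuOii{T}{n}{m}(V)$'' would have to be expanded into --- is an independent computation through the bimodule structure. One expands the left-hand side by \eqref{eq:def-ymsabxx} into residues of $\wx^{-l}(1+\wx)^{\wt a-1+l_3+\delta(r\leq i_3)+r/T}$ against $\sum_{j}\hunitmu{T}{r}{k}{n}(a,b,j)\wx^{-j-1}$, multiplied by $*^{T}_{k,n,m}c$; expands the right-hand side by \eqref{eq:def-action-M} into iterated products $(a*^{T}_{k,j,m}(b*^{T}_{j,n,m}c))\otimes u$; replaces these by $((a*^{T}_{k,j,n}b)*^{T}_{k,n,m}c)\otimes u$, which is valid in $M(U)$ because the associativity defects lie in $\zhuO{T}{k}{m}(V)$ and hence vanish in $\zhumod{T}{k}{m}(V)\otimes_{\zhualg{T}{m}(V)}U$; and finally resums the binomial coefficients via \cite[Proposition 5.3]{DJ1} to identify the two expressions. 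These steps, not the inequality $E>\wt a-1+n$, are the load-bearing part of the proof, and none of them is carried out in your proposal.
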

\begin{proof}
Let $c\otimes u\in M(n)$ with $c\in V$ and $u\in U$. 
We may assume $b$ to be a homogeneous element of $V$.
We shall show 
\begin{align*}
&\Res_{\wx_0}\wx_0^{-l}(\wx_2+\wx_0)^{\wt a+q}\wx_2^{\wt b-q}\ty{r}{M}{a,b}{\wx_2,\wx_0}(c\otimes u)\\
&
=\Res_{\wx_0}\wx_0^{-l}(\wx_0+\wx_2)^{\wt a+q}\wx_2^{\wt b-q}Y_{M}^{r}(a,\wx_0+\wx_2)Y_{M}(b,\wx_2)c\otimes u,
\end{align*}
where $q=-1+l_3+\delta(r\leq i_3)+r/T$. 
We have
\begin{align}
\label{eq:asso-minus-1}
&\Res_{\wx_0}\wx_0^{-l}(\wx_2+\wx_0)^{\wt a+q}\wx_2^{\wt b-q}\ty{r}{M}{a,b}{\wx_2,\wx_0}(c\otimes u)\nonumber\\
&=\sum_{j=0}^{\infty}\sum_{k\in(1/T)\Z}\binom{\wt a-1+l_3+\delta(r\leq i_3)+r/T}{j}
\wx_2^{-k-1+\wt a+\wt b-j}\nonumber\\
&\quad{}\times \cty{r}{M}{a,b}{k,j-l}(c\otimes u)\nonumber\\
&=\sum_{j=0}^{\infty}\sum_{\begin{subarray}{c}k\in(1/T)\N\end{subarray}}
\binom{\wt a-1+l_3+\delta(r\leq i_3)+r/T}{j}\wx_2^{-l+k-n+1}\nonumber\\
&\quad{}\times \cty{r}{M}{a,b}{\wt a+\wt b-j+l-k+n-2,j-l}(c\otimes u)\nonumber\\
&=\sum_{j=0}^{\infty}\sum_{\begin{subarray}{c}k\in(1/T)\N\end{subarray}}
\binom{\wt a-1+l_3+\delta(r\leq i_3)+r/T}{j}\wx_2^{-l+k-n+1}\nonumber\\
&\quad{}\times (\hunitmu{T}{\wr}{k}{n}(a,b,j-l)*^{T}_{k,n,m}c)\otimes u\nonumber\\
&=\sum_{\begin{subarray}{c}k\in(1/T)\N\end{subarray}}
\wx_2^{-l+k-n+1}(\Res_{\wx}\wx^{-l}(1+\wx)^{\wt a-1+l_3+\delta(r\leq i_3)+r/T}\nonumber\\
&\quad{}\times
(\sum_{j\in\Z} \hunitmu{T}{\wr}{k}{n}(a,b,j)x^{-j-1})*^{T}_{k,n,m}c)\otimes u.
\end{align}

On the other hand, applying the same computation as in the proof of \cite[Lemma 5.10]{DJ2}
to 
\begin{align*}
&\Res_{\wx_0}\wx_0^{-l}(\wx_0+\wx_2)^{\wt a+q}
\wx_2^{\wt b-q}
Y_{M}^{\wr}(a,\wx_0+\wx_2)Y_{M}(b,\wx_2)(c\otimes u)\\
&=\sum_{s=0}^{T-1}
\Res_{\wx_0}\wx_0^{-l}(\wx_0+\wx_2)^{\wt a+q}
\wx_2^{\wt b-q}
Y_{M}^{\wr}(a,\wx_0+\wx_2)Y_{M}^{\ws}(b,\wx_2)(c\otimes u),
&\end{align*}
we have
\begin{align}
\label{eq:asso-minus-2}
&\Res_{\wx_0}\wx_0^{-l}(\wx_0+\wx_2)^{\wt a+q}
\wx_2^{\wt b-q}
Y_{M}^{\wr}(a,\wx_0+\wx_2)Y_{M}(b,\wx_2)(c\otimes u)\nonumber\\
&=\sum_{s=0}^{T-1}\sum_{\begin{subarray}{c}k\in (i_3-r-s)/T+\Z\nonumber\\ 0\leq k\end{subarray}}\wx_2^{-l+k-n+1}\nonumber\\
&\quad{}\times\sum_{\begin{subarray}{c}j\in (i_3-s)/T+\Z\nonumber\\ 0\leq j\leq k+l_3+\delta(r\leq i_3)+r/T-l\end{subarray}}
\binom{-l}{-j+l_3+\delta(r\leq i_3)+r/T-l+k}\nonumber\\
&\quad{}\times(-1)^{-j+l_3+\delta(r\leq i_3)+r/T-l+k}
(a*^{T}_{k,j,m}(b*^{T}_{j,n,m}c))\otimes u\nonumber\\
&\equiv\sum_{s=0}^{T-1}\sum_{\begin{subarray}{c}k\in (i_3-r-s)/T+\Z\nonumber\\ 0\leq k\end{subarray}}\wx_2^{-l+k-n+1}\nonumber\\
&\quad{}\times\sum_{\begin{subarray}{c}j\in (i_3-s)/T+\Z\nonumber\\ 0\leq j\leq k+l_3+\delta(r\leq i_3)+r/T-l\end{subarray}}
\binom{-l}{-j+l_3+\delta(r\leq i_3)+r/T-l+k}\nonumber\\
&\quad{}\times(-1)^{-j+l_3+\delta(r\leq i_3)+r/T-l+k}
((a*^{T}_{k,j,n}b)*^{T}_{k,n,m}c))\otimes u\nonumber\\
&\qquad\pmod{\zhuO{T}{n}{m}(V)\db{\wx_2}}.
\end{align}
Moreover, for each $k=l_4+(i_3-r-s)/T\in (i_3-r-s)/T+\Z$ with $k\geq 0$ and $l_4\in\Z$,
we have
\begin{align}
\label{eq:asso-minus-3}
&\sum_{\begin{subarray}{c}j\in (i_3-s)/T+\Z\\ 0\leq j\leq k+l_3+\delta(r\leq i_3)+r/T-l\end{subarray}}
\binom{-l}{-j+l_3+\delta(r\leq i_3)+r/T-l+k}\nonumber\\
&\quad{}\times(-1)^{-j+l_3+\delta(r\leq i_3)+r/T-l+k}a*^{T}_{k,j,n}b\nonumber\\
&=\sum_{p=0}^{l_4+l_3+\delta(r\leq i_3)+\delta(s\leq i_3)-l-1}
\binom{-l}{p}(-1)^{p}
\sum_{i=0}^{l_4+l_3+\delta(r\leq i_3)+\delta(s\leq i_3)-l-1-p}
\binom{-p-l}{i}\nonumber\\
&\quad\times
\Res_{\wx}(1+\wx)^{\wt a-1+l_3+\delta(r\leq i_3)+r/T}\wx^{-p-l-i}
\sum_{j\in\Z} \hunitmu{T}{\wr}{k}{n}(a,b,j)x^{-j-1}\nonumber\\
&=\Res_{\wx}\wx^{-l}(1+\wx)^{\wt a-1+l_3+\delta(r\leq i_3)+r/T}
\sum_{j\in\Z} \hunitmu{T}{\wr}{k}{n}(a,b,j)x^{-j-1}
\end{align}
by \cite[Proposition 5.3]{DJ1}.
By \eqref{eq:asso-minus-1}--\eqref{eq:asso-minus-3}
the proof is complete.
\end{proof}

By Lemmas \ref{lemma:plus-l} and \ref{lemma:minus-l},
we have the following result.
\begin{lemma}\label{lemma:associative}
Let $a,b\in V$ with $a$ being homogeneous,
$r\in\N$ with $0\leq r\leq T-1$ and 
$n=l_3+i_3/T\in(1/T)\N$ with $l_3,i_3\in\N$ and $0\leq i_3\leq T-1$.
Then
\begin{align*}
&(\wx_2+\wx_0)^{\wt a-1+l_3+\delta(r\leq i_3)+r/T}\ty{r}{M}{a,b}{\wx_2,\wx_0}\\
&
=(\wx_0+\wx_2)^{\wt a-1+l_3+\delta(r\leq i_3)+r/T}Y_{M}^{r}(a,\wx_0+\wx_2)Y_{M}(b,\wx_2)
\end{align*}
on $M(n)$. 
\end{lemma}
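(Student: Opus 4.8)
The plan is to read off the stated identity of formal series in $\wx_0$ from the two residue identities of Lemmas \ref{lemma:plus-l} and \ref{lemma:minus-l} by matching the coefficient of each power of $\wx_0$ separately. First I would fix $w\in M(n)$ and check that both sides are honest elements of $M\db{\wx_2^{1/T}}\db{\wx_0}$, so that coefficient-wise comparison is legitimate: for the left-hand side this follows from \eqref{eq:ab-bound}, which bounds the $\wx_0$-degree of $\ty{r}{M}{a,b}{\wx_2,\wx_0}(w)$ from below, together with the fact that $(\wx_2+\wx_0)^{\wt a-1+l_3+\delta(r\leq i_3)+r/T}$ is expanded in nonnegative powers of $\wx_0$; for the right-hand side one verifies in the same way that $(\wx_0+\wx_2)^{\wt a-1+l_3+\delta(r\leq i_3)+r/T}Y_{M}^{r}(a,\wx_0+\wx_2)Y_{M}(b,\wx_2)w$ lies in the same space. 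Since two elements $F(\wx_0),G(\wx_0)$ of $M\db{\wx_2^{1/T}}\db{\wx_0}$ coincide if and only if $\Res_{\wx_0}\wx_0^{k}F(\wx_0)=\Res_{\wx_0}\wx_0^{k}G(\wx_0)$ for every $k\in\Z$, it suffices to establish this family of coefficient identities.

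Next I would split according to the sign of $k$. For $k\in\N$, the equality of $\Res_{\wx_0}\wx_0^{k}(\cdots)$ on the two sides is exactly Lemma \ref{lemma:plus-l} applied with $l=k$; since $\Res_{\wx_0}\wx_0^{k}F$ extracts the coefficient of $\wx_0^{-k-1}$, this matches the coefficients of all strictly negative powers of $\wx_0$. For $k\in\Z_{<0}$, writing $k=-l$ with $l\in\Z_{\geq 1}$, the corresponding identity is Lemma \ref{lemma:minus-l} applied with this $l$; as $\Res_{\wx_0}\wx_0^{-l}F$ extracts the coefficient of $\wx_0^{l-1}$, this matches the coefficients of all nonnegative powers of $\wx_0$. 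Because $\{-k-1\mid k\in\N\}=\Z_{\leq -1}$ and $\{l-1\mid l\in\Z_{\geq 1}\}=\Z_{\geq 0}$ together exhaust $\Z$, every coefficient of $\wx_0$ on the two sides agrees, whence the asserted equality holds on $M(n)$.

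The only thing to keep straight beyond quoting the two lemmas — and thus the nearest thing to an obstacle — is that these two residue families jointly exhaust all powers of $\wx_0$ with neither gap nor conflict: Lemma \ref{lemma:plus-l} supplies precisely the negative-degree coefficients and Lemma \ref{lemma:minus-l} the coefficients of nonnegative degree, so each coefficient of $\wx_0$ is pinned down by exactly one residue computation. Granting the boundedness of \eqref{eq:ab-bound} that makes both sides genuine elements of $M\db{\wx_2^{1/T}}\db{\wx_0}$, there is no further analytic subtlety here; all the substantive computation has already been carried out in the proofs of Lemmas \ref{lemma:plus-l} and \ref{lemma:minus-l}.
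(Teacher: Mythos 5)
Your proof is correct and takes exactly the paper's route: the paper deduces Lemma \ref{lemma:associative} immediately from Lemmas \ref{lemma:plus-l} and \ref{lemma:minus-l}, which is precisely your coefficient-matching argument (residues against $\wx_0^{l}$, $l\in\N$, pin down the negative powers of $\wx_0$; residues against $\wx_0^{-l}$, $l\geq 1$, pin down the nonnegative ones). One cosmetic remark: your claim that the right-hand side visibly lies in $M\db{\wx_2^{1/T}}\db{\wx_0}$ is neither obvious a priori nor needed — both sides are well-defined formal series with integer powers of $\wx_0$, and equality of such series is checked coefficientwise regardless, so that claim is not load-bearing.
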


By \eqref{eq:universal-sum} and Lemmas \ref{lemma:Borcherds}, \ref{lemma:truncated-id},
\ref{lemma:commutative} and \ref{lemma:associative}, the same argument as in the proof of \cite[Theorem 4.13]{DJ1}
shows the following theorem.
\begin{theorem}\label{theorem:zhu-correspondence}
Let $U$ be a left $\zhualg{T}{m}(V)$-module.
Then $M(U)=\oplus_{n\in(1/T)\N}\zhumod{T}{n}{m}(V)\otimes_{\zhualg{T}{m}(V)}U$
is a $(1/T)\N$-graded $(V,T)$-module with $M(U)(n)=\zhumod{T}{n}{m}(V)\otimes_{\zhualg{T}{m}(V)}U$
and the following universal property:
for a $(V,T)$-module $W$
and an $\zhualg{T}{m}(V)$-homomorphism $\sigma : U\rightarrow \Omega_{m}(W)$,
there is a unique homomorphism $\bar{\sigma} : M(U)\rightarrow W$ of
$(V,T)$-modules that extends $\sigma$.
Moreover, if $U$ cannot factor through $\zhualg{T}{m-1/T}(V)$, then $M(U)(0)\neq 0$.
\end{theorem}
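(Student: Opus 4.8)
The plan is to read off the module structure on $M(U)$ directly from the action \eqref{eq:def-action-M} and then verify the universal property by transporting, to the space $\Omega_{m}(W)$, the bimodule identities already established for graded modules. First I would note that the grading compatibility $a_{i}M(U)(n)\subset M(U)(n+\wt a-i-1)$ is built into \eqref{eq:def-action-M}, that axiom (1) of Definition \ref{definition:def-vt} is Lemma \ref{lemma:truncated-id}(1), and that axiom (2) is Lemma \ref{lemma:truncated-id}(2). The substantive point is axiom (3), which is condition (1) of Lemma \ref{lemma:Borcherds}, hence by that lemma equivalent to its condition (3). I would apply Lemma \ref{lemma:Borcherds} to $A(\wx_1,\wx_2)=Y_{M(U)}(a,\wx_1)Y_{M(U)}(b,\wx_2)w$, $B(\wx_2,\wx_1)=Y_{M(U)}(b,\wx_2)Y_{M(U)}(a,\wx_1)w$, $C(\wx_2,\wx_0)=Y_{M(U)}(Y(a,\wx_0)b,\wx_2)w$, supplying $\ty{s}{M(U)}{a,b}{\wx_2,\wx_0}(w)$ as the decomposition pieces $C^{[s]}$: the identity $\sum_{s}C^{[s]}=C$ is exactly \eqref{eq:universal-sum}, the weak commutativity \eqref{eq:Borcherds-commutative} is Lemma \ref{lemma:commutative}, and the weak associativity \eqref{eq:Borcherds-associative} is extracted from Lemma \ref{lemma:associative}. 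Lemma \ref{lemma:Borcherds} then returns the required $F(a,b,w|\wx_1,\wx_2)$, so $(M(U),Y_{M(U)})$ is a $(1/T)\N$-graded $(V,T)$-module.

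For the universal property I would first observe that the proofs of Lemmas \ref{lemma:O-0-zero} and \ref{lemma:hom} use the homogeneity of $M(m)$ only through the vanishing $a_{\wt a-1+k}w=0$ for $k>m$, which is precisely the defining property of $w\in\Omega_{m}(W)$; hence both lemmas, together with Proposition \ref{proposition:a-acts-omega}, remain valid with $M(m)$ replaced by $\Omega_{m}(W)$ and $o_{n,m}$ as in \eqref{eq:def-onm}. I would then define $\bar\sigma$ on $M(U)(n)=\zhumod{T}{n}{m}(V)\otimes_{\zhualg{T}{m}(V)}U$ by $\bar\sigma(\bar b\otimes u)=o_{n,m}(b)\sigma(u)$ for homogeneous $b$, extended linearly. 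Well-definedness has two parts: vanishing on $\zhuO{T}{n}{m}(V)$, which on $\zhuOzero{T}{n}{m}(V)+\zhuOi{T}{n}{m}(V)$ is the $\Omega_{m}(W)$-version of Lemma \ref{lemma:O-0-zero} and on $\zhuOii{T}{n}{m}(V)+\zhuOiii{T}{n}{m}(V)$ follows from the associativity of $o_{n,m}$ supplied by Lemma \ref{lemma:hom}; and balancedness over $\zhualg{T}{m}(V)$, from $o_{n,m}(b*^{T}_{n,m,m}a)\sigma(u)=o_{n,m}(b)o(a)\sigma(u)=o_{n,m}(b)\sigma(au)$, using Lemma \ref{lemma:hom} and that $\sigma$ is a $\zhualg{T}{m}(V)$-map.

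That $\bar\sigma$ intertwines the two $(V,T)$-actions is a coefficient check reducing to Lemma \ref{lemma:hom}: for homogeneous $a$ and $\bar b\otimes u\in M(U)(n)$ with $n'=n+\wt a-i-1\geq 0$, \eqref{eq:def-action-M} and Lemma \ref{lemma:hom} give $\bar\sigma(a_{i}(\bar b\otimes u))=o_{n',m}(a*^{T}_{n',n,m}b)\sigma(u)=o_{n',n}(a)o_{n,m}(b)\sigma(u)=a_{i}\bar\sigma(\bar b\otimes u)$, since $o_{n',n}(a)=a_{i}$; the terms that would fall into negative degree must be shown to vanish, and this is exactly where the lower truncation encoded in $\sigma(u)\in\Omega_{m}(W)$ is used. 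Since $o({\bf 1})=\id$, $\bar\sigma$ extends $\sigma$ on $M(U)(m)\cong U$. Uniqueness follows because $M(U)$ is generated by $\{\bar{\bf 1}\otimes u\mid u\in U\}$: by \eqref{eq:a-multi-1} one has $\bar b\otimes u=b_{\wt b+m-n-1}(\bar{\bf 1}\otimes u)$, so any $(V,T)$-homomorphism extending $\sigma$ is determined on all of $M(U)$ (cf. Corollary \ref{corollary:one-generated}).

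Finally, for the ``moreover'' I would argue by contraposition: assuming $M(U)(0)=\zhumod{T}{0}{m}(V)\otimes_{\zhualg{T}{m}(V)}U=0$, I would show that the $\zhualg{T}{m}(V)$-action on $U$ annihilates the kernel of the canonical surjection $\zhualg{T}{m}(V)\to\zhualg{T}{m-1/T}(V)$ induced by the identity on $V$ (the surjection coming from Lemma \ref{lemma:descend}), so that the action factors through $\zhualg{T}{m-1/T}(V)$; this uses the bimodule composition of Theorem \ref{theorem:zhu-algebra} to relate the bottom graded piece to this kernel. I expect this last step to be the main obstacle. Unlike the rest of the argument, it is not a direct translation of a single earlier lemma but requires tracing how the vanishing of the lowest graded piece propagates through the $*^{T}$-products into a statement about the annihilator of $U$; pinning down the correct direction of the implication and keeping the level indices consistent is the delicate point, and this is precisely the part handled in the analogue of \cite[Theorem 4.13]{DJ1}.
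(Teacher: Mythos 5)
Everything you spell out for the module structure and the universal property is precisely the paper's proof: the paper's entire argument is the one sentence that \eqref{eq:universal-sum} and Lemmas \ref{lemma:Borcherds}, \ref{lemma:truncated-id}, \ref{lemma:commutative} and \ref{lemma:associative} allow one to repeat the proof of \cite[Theorem 4.13]{DJ1}, and your assembly --- Lemma \ref{lemma:Borcherds} applied with $C^{[s]}=\ty{s}{M(U)}{a,b}{\wx_2,\wx_0}(w)$ to verify axiom (3) of Definition \ref{definition:def-vt}, the map $\bar\sigma(\bar b\otimes u)=o_{n,m}(b)\sigma(u)$ justified by the $\Omega_{m}(W)$-versions of Lemmas \ref{lemma:O-0-zero} and \ref{lemma:hom}, and uniqueness from the fact that $\bar b\otimes u=b_{\wt b+m-n-1}(\bar{\bf 1}\otimes u)$ --- is exactly that argument.

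For the \lq\lq moreover\rq\rq\ clause, however, your plan has a concrete gap. You propose to deduce from $M(U)(0)=0$ that the kernel of $\zhualg{T}{m}(V)\rightarrow\zhualg{T}{m-1/T}(V)$ annihilates $U$ via a \lq\lq bimodule composition\rq\rq\ relating that kernel to the bottom graded piece; no such relation between the kernel and $*^{T}$-products involving $\zhumod{T}{0}{m}(V)$ exists in the paper, and none is needed. The step that closes your contrapositive is a grading shift: if $M(U)(0)=0$, then $M'$ defined by $M'(n)=M(U)(n+1/T)$ is again a $(1/T)\N$-graded $(V,T)$-module, and $M'(m-1/T)=M(U)(m)$ lies in $\Omega_{m-1/T}(M')$, so Proposition \ref{proposition:a-acts-omega} makes $M(U)(m)$ a left $\zhualg{T}{m-1/T}(V)$-module via $o$. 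Under the isomorphism $U\cong M(U)(m)$, $u\mapsto\bar{\bf 1}\otimes u$, the given $\zhualg{T}{m}(V)$-action on $U$ coincides with this $o$-action (by \eqref{eq:def-action-M}, \eqref{eq:identity} and \eqref{eq:a-multi-1}), so every element of $\zhuO{T}{m-1/T}{m-1/T}(V)$ --- in particular every representative of the kernel --- acts as zero on $U$, which says exactly that the action factors through $\zhualg{T}{m-1/T}(V)$. Thus the step you flagged as the main obstacle needs no new computation, only the observation that the $\Omega$-machinery you had already transported to arbitrary $(V,T)$-modules applies to the regraded $M(U)$.
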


The following result immediately follows from Theorem \ref{theorem:zhu-correspondence} (cf. \cite[Theorem 4.9]{DLM2}).

\begin{corollary}\label{corollary:zhu-correspondence}
For every $m\in(1/T)\N$, there is a bijection between the set of isomorphism classes of 
simple left $\zhualg{T}{m}(V)$-modules which cannot factor through $\zhualg{T}{m-1/T}(V)$
and that of simple $(1/T)\N$-graded $(V,T)$-modules.
\end{corollary}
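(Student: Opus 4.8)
The plan is to build two mutually inverse assignments between the two sets, with essentially all of the substantive work already delivered by Theorem~\ref{theorem:zhu-correspondence} (the universal property of $M(U)$) and Proposition~\ref{proposition:a-acts-omega} (the $\zhualg{T}{m}(V)$-module structure on degree pieces); the remaining argument is formal and follows \cite[Theorem 4.9]{DLM2} and \cite[Theorem 4.13]{DJ1}. I would first set up the forward map. Given a simple left $\zhualg{T}{m}(V)$-module $U$ that cannot factor through $\zhualg{T}{m-1/T}(V)$, form $M(U)$. The key preliminary observation is that its degree-$m$ component recovers $U$: since $\zhumod{T}{m}{m}(V)=\zhualg{T}{m}(V)$ as a bimodule over itself, $M(U)(m)=\zhualg{T}{m}(V)\otimes_{\zhualg{T}{m}(V)}U\cong U$, and one checks this is an isomorphism of left $\zhualg{T}{m}(V)$-modules for the action of Proposition~\ref{proposition:a-acts-omega}. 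A second preliminary fact is that $M(U)$ is generated as a $(V,T)$-module by $M(U)(m)$: writing the generator of $M(U)(m)$ as $\overline{\bf 1}\otimes u$, the identities \eqref{eq:identity} and \eqref{eq:a-multi-1} give $a_i(\overline{\bf 1}\otimes u)=(a*^{T}_{n,m,m}{\bf 1})\otimes u=\overline{a}\otimes u$ in the appropriate degree, and these span $M(U)$ as $a$ ranges over $V$ (cf. Corollary~\ref{corollary:one-generated}).

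Next I would pass to the simple quotient. Let $J$ be the sum of all $(1/T)\N$-graded $(V,T)$-submodules $N$ of $M(U)$ with $N\cap M(U)(m)=0$; because the grading respects sums this is again such a submodule, so it is the unique maximal one, and $L(U):=M(U)/J$ has $L(U)(m)\cong U$. I would then argue that $L(U)$ is simple: any nonzero graded submodule $P$ meets $L(U)(m)$ in an $\zhualg{T}{m}(V)$-submodule of the simple module $U$, so either $P\cap L(U)(m)=0$, forcing $P=0$ by maximality of $J$, or $P\cap L(U)(m)=L(U)(m)$, in which case $P$ contains the generating component and hence $P=L(U)$. This produces the forward assignment $U\mapsto L(U)$, a simple $(1/T)\N$-graded $(V,T)$-module. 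The backward assignment is simply $M\mapsto M(m)$, which is a left $\zhualg{T}{m}(V)$-module by Proposition~\ref{proposition:a-acts-omega}.

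For mutual inverseness, one direction is immediate from the construction, namely $L(U)(m)\cong U$. For the other, take a simple graded $M$ (normalized so that $M(0)\neq 0$) with $M(m)\neq 0$ and apply the universal property of Theorem~\ref{theorem:zhu-correspondence} to the inclusion $\sigma\colon M(m)\hookrightarrow\Omega_{m}(M)$, which is legitimate since $\oplus_{i\le m}M(i)\subset\Omega_{m}(M)$. This yields a graded $(V,T)$-homomorphism $\overline{\sigma}\colon M(M(m))\to M$ extending $\sigma$; since $M$ is simple and $\overline{\sigma}$ is nonzero on degree $m$, it is surjective, and its kernel is a graded submodule meeting $M(M(m))(m)=M(m)$ trivially, hence contained in $J$. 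Comparing the two simple quotients then gives $M\cong M(M(m))/J=L(M(m))$, so the two maps are inverse bijections.

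The one genuinely delicate point — and the place I expect to be the main obstacle — is reconciling the two ``non-degeneracy'' conditions so that the correspondence lands exactly on the stated sets: that ``$U$ cannot factor through $\zhualg{T}{m-1/T}(V)$'' is precisely what Theorem~\ref{theorem:zhu-correspondence} converts into $M(U)(0)\neq 0$, pinning $L(U)$ at the bottom of the grading, while conversely for a simple graded $M$ normalized by $M(0)\neq 0$ one must verify that $M(m)$ is nonzero and that the resulting $\zhualg{T}{m}(V)$-module does not factor through $\zhualg{T}{m-1/T}(V)$. This is the only step where the precise behaviour of the filtration $\zhualg{T}{m}(V)\twoheadrightarrow\zhualg{T}{m-1/T}(V)$ (the surjection from the Proposition preceding the theorem) is used; once it is in place, everything else is formal manipulation with the universal property and \eqref{eq:a-multi-1}, exactly as in \cite[Theorem 4.9]{DLM2}.
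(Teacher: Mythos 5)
Your construction is exactly the argument the paper has in mind: the paper's own ``proof'' is nothing but the citation of \cite[Theorem 4.9]{DLM2}, and the intended argument is the one you write out, namely $U\mapsto L(U)=M(U)/J$ with $J$ the unique maximal graded submodule meeting $M(U)(m)$ trivially, $M\mapsto M(m)$ in the other direction, and mutual inverseness via the universal property of Theorem \ref{theorem:zhu-correspondence}. Your forward construction, the simplicity of $L(U)$, and the identification $M\cong L(M(m))$ \emph{when} $M(m)\neq 0$ are carried out correctly.

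The genuine gap is that everything you postpone to your last paragraph is where the content lies, and none of it is supplied. (i) You never prove that $M(m)$ is a \emph{simple} $\zhualg{T}{m}(V)$-module for a simple graded $M$. This is provable from results in the paper: if $0\neq W\subsetneq M(m)$ were an $\zhualg{T}{m}(V)$-submodule, then by Corollary \ref{corollary:one-generated} and Lemma \ref{lemma:hom} the $(V,T)$-submodule of $M$ generated by $W$ is graded with degree-$m$ piece $o(\zhualg{T}{m}(V))W=W$, hence is a proper non-zero graded submodule, contradicting simplicity of $M$. (ii) You never prove that $M(m)$ cannot factor through $\zhualg{T}{m-1/T}(V)$; this also needs an argument (if it did factor, then Lemma \ref{lemma:multi-descend} and the surjections $\zhumod{T}{n}{m}(V)\rightarrow\zhumod{T}{n-1/T}{m-1/T}(V)$ induced by $\id_V$ exhibit $M$ as a degree-preserving quotient of the level-$(m-1/T)$ module $M(M(m))$ with grading shifted up by $1/T$, forcing $M(0)=0$). (iii) Most seriously, the claim $M(m)\neq 0$, which you treat as verifiable, is false in general for $m>0$: take $T=1$, $V$ a simple vertex operator algebra with $V_1=0$ (e.g.\ the moonshine module), $M=V$, $m=1$; then $M$ is a simple $\N$-graded $(V,1)$-module, correctly normalized with $M(0)\neq 0$, yet $M(1)=0$, so the assignment $M\mapsto M(m)$ does not even land in the set of $\zhualg{T}{m}(V)$-modules. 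No property of the surjection $\zhualg{T}{m}(V)\rightarrow\zhualg{T}{m-1/T}(V)$ can repair this step; the correspondence only holds on the set of simple graded modules with $M(m)\neq 0$ (for $m=0$ this is the normalization, which is why the introduction states the result only for $m=0$), and your plan cannot be completed without adding that hypothesis. Finally, a smaller gap of the same kind sits in your forward direction: Theorem \ref{theorem:zhu-correspondence} gives $M(U)(0)\neq 0$, but to know that $L(U)$ is correctly normalized you need $M(U)(0)\not\subset J$, i.e.\ $L(U)(0)\neq 0$; this requires the separate observation that if $L(U)(0)=0$, then after shifting the grading of $L(U)$ the $\zhualg{T}{m}(V)$-action on $U\cong L(U)(m)$ factors through $\zhualg{T}{m-d}(V)$ for some $d\geq 1/T$, contradicting the hypothesis on $U$.
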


\section{\label{subsection:appendix}
Appendix}

\subsection{\label{subsection:determinant}
The determinant of a matrix}
In this subsection we shall show that the matrix $\Gamma$ in \eqref{eq:matrix-H} is non-singular.
Let $\wa,t$ be positive integers and $x_0,\ldots,x_{t-1}$ indeterminates.
We denote by $E_n$ the $n\times n$ identity matrix.
Define
$\walpha_{i}^{k}(x_s)=\sum_{j=1}^{k}\binom{x_{\ws}}{i+j}\binom{-x_{\ws}}{k-j}\in\C[x_s]$
for $0\leq \ws\leq t-1, 1\leq k\leq b$ and $i\in\Z$.
Note that
\begin{align}
\label{eq:deg-al}
\deg \walpha_{i}^{k}(x_s)&=i+k
\end{align}
for $i\in\N$.
Define $t$ $\wa t\times \wa $-matrices $A_{s},\ws=0,\ldots,t-1$ by
\begin{align}
\label{eq:matrix-as}
A_{\ws}&=
\begin{pmatrix}
\walpha_{(t-1)\wa -1}^{1}(x_{\ws})&\walpha_{(t-1)\wa -1}^{2}(x_{\ws})&\cdots&\walpha_{(t-1)\wa -1}^{\wa }(x_{\ws})\\
\walpha_{(t-1)\wa -2}^{1}(x_{\ws})&\walpha_{(t-1)\wa -2}^{2}(x_{\ws})&\cdots&\walpha_{(t-1)\wa -2}^{\wa }(x_{\ws})\\
\vdots&\vdots&&\vdots\\
\walpha_{-\wa}^{1}(x_{\ws})&\walpha_{-\wa}^{2}(x_{\ws})&\cdots&\walpha_{-\wa}^{\wa }(x_{\ws})
\end{pmatrix}
\end{align}
and set $A=(A_0\cdots A_{t-1})$.
The following result implies $\Gamma$ is non-singular.
\begin{proposition}
\label{proposition:det-a}
\begin{align*}
\det A&=
\prod_{0\leq i<j\leq t-1}\prod_{k=-\wa +1}^{\wa -1}
\big(\frac{x_i-x_j+k}{\wa (j-i)+k}\big)^{\wa -|k|}.
\end{align*}
\end{proposition}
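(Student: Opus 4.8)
The plan is to first replace $A$ by a cleaner matrix with binomial‑coefficient entries, and then to run a vanishing‑plus‑degree‑plus‑leading‑coefficient argument of Vandermonde type. Writing $P_k(x;z)=\sum_{l=0}^{k-1}\binom{-x}{l}z^l$ for the degree‑$(k-1)$ truncation of $(1+z)^{-x}$, one checks directly from the definition that $\alpha_i^k(x)=[z^{i+k}]\big((1+z)^xP_k(x;z)\big)$, so the $k$‑th column of the block belonging to $x_s$ has generating function $z^{-k}(1+z)^{x_s}P_k(x_s;z)$. Since $z^{-k}P_k(x;z)=z^{-k}+\sum_{l=1}^{k-1}\binom{-x}{l}z^{l-k}$ is triangular in the powers $z^{-1},\dots,z^{-b}$ with $1$'s on the diagonal, the families $\{z^{-k}P_k(x_s;z)\}_{k=1}^{b}$ and $\{z^{-m}\}_{m=1}^{b}$ are related, inside each block, by a unitriangular (hence determinant‑one) transition. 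Carrying out these column operations block by block gives $\det A=\det\tilde A$, where $\tilde A$ has entry $\binom{x_s}{i+m}$ in the row indexed by $i\in\{-b,\dots,(t-1)b-1\}$ and the column indexed by $(s,m)$, $0\le s\le t-1$, $1\le m\le b$. Because $\deg\binom{x_s}{i+m}=i+m$ when $i+m\ge0$ and the entry vanishes otherwise, every nonvanishing term of the Leibniz expansion has total degree $\sum_i i+t\sum_{m=1}^{b}m=\tfrac12 t(t-1)b^2$; in particular $\deg\det A\le\tfrac12 t(t-1)b^2$, which is exactly the degree of the asserted product.

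Next I would show that each linear form $x_i-x_j+k$ ($0\le i<j\le t-1$, $-b+1\le k\le b-1$) divides $\det A$ to order $b-|k|$. Fix such $i,j,k$ and substitute $x_i=x_j-k$. Factoring the unit $(1+z)^{x_i}$ (resp.\ $(1+z)^{x_j}$) out of blocks $i$ and $j$, chosen so that the surviving power $(1+z)^{|k|}$ is an honest polynomial, a column relation supported on these two blocks becomes, up to sign, the Laurent‑polynomial identity $\sum_{m}c_m z^{-m}=(1+z)^{|k|}\sum_{m'}d_{m'}z^{-m'}$, once one checks that the large common factor $(1+z)^{x_i}$, being a unit, forces the combination to vanish identically rather than merely on the $tb$ rows in range (its top degree is too small to allow cancellation against the tail). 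Equating the coefficients of $z^{0},\dots,z^{|k|-1}$ yields $|k|$ conditions whose matrix $\big(\binom{|k|}{j'+m'}\big)_{0\le j'\le|k|-1,\,1\le m'\le b}$ is a Hankel matrix of binomial coefficients of full rank $|k|$; hence the space of column relations has dimension $b-|k|$. So the rank of $A$ drops by $b-|k|$ on $\{x_i-x_j+k=0\}$, and the standard fact that a square polynomial matrix whose rank drops by $d$ along an irreducible hypersurface $\{L=0\}$ has $L^{d}\mid\det$ (restrict to a transversal line and read off the Smith normal form over $\C[[\epsilon]]$) gives $(x_i-x_j+k)^{b-|k|}\mid\det A$.

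The forms $x_i-x_j+k$ are pairwise non‑proportional, so their prescribed powers are coprime and their product divides $\det A$; since this product has degree $\tfrac12 t(t-1)b^2=\deg\det A$, we get $\det A=C\prod_{i<j}\prod_{k}(x_i-x_j+k)^{b-|k|}$ for a scalar $C$. To evaluate $C$ I would compare the coefficient of the lexicographically leading monomial $\prod_s x_s^{b^2(t-1-s)}$: on the product side it is $C$ (each factor contributes its leading term $x_i$), while on the determinant side the leading monomial forces block $s$ to occupy the row‑window $W_s=\{(t-2-s)b,\dots,(t-1-s)b-1\}$, so the coefficient is $\pm\prod_{s}\det\big(1/(i+m)!\big)_{i\in W_s,\,1\le m\le b}$. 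Each factor is a reciprocal‑factorial Hankel determinant, evaluated by clearing denominators to a Vandermonde as $\det\big(1/(c+p+q)!\big)_{0\le p,q\le b-1}=(-1)^{\binom b2}\big(\prod_{i=0}^{b-1}i!\big)\big/\prod_{p=0}^{b-1}(c+p+b-1)!$. Taking $c=(t-2-s)b+1$ and multiplying over $s$, the denominators telescope into $\prod_{N=0}^{tb-1}N!$, giving $|C|=\big(\prod_{i=0}^{b-1}i!\big)^{t}\big/\prod_{N=0}^{tb-1}N!$. The factorial identity $\prod_{i<j}\prod_{a,a'=0}^{b-1}\big(b(j-i)+a-a'\big)=\prod_{N=0}^{tb-1}N!\big/\big(\prod_{i=0}^{b-1}i!\big)^{t}$—the cross‑block part of the Vandermonde of $0,1,\dots,tb-1$ grouped into blocks of size $b$—identifies $|C|$ with $\prod_{i<j}\prod_{k}(b(j-i)+k)^{-(b-|k|)}$, and tracking the signs from the row reversal and the window permutation fixes the sign.

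The main obstacle is the middle step: pinning the rank drop to exactly $b-|k|$. The degree bound and the coprimality are routine, and the constant, though laborious, is forced once the Hankel evaluation is in hand; but the rank‑drop count must be handled carefully, because the columns live in a finite range of rows and one has to verify that a putative relation between two blocks cannot be concealed by the truncation. This is precisely where factoring out the unit $(1+z)^{x_i}$ and arguing that the resulting Laurent‑polynomial identity must hold identically becomes essential, and it is also what forces the multiplicity to be $b-|k|$ rather than something larger.
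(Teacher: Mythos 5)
Your proposal is correct, and its overall skeleton matches the paper's: unitriangular column operations inside each block reduce $A$ to the pure binomial matrix (the paper's $B$, your $\tilde A$), a divisibility statement for the linear factors plus a degree bound show $\det A$ is a constant multiple of the product, and the constant is then pinned down. Within that skeleton you execute two steps differently. For the divisibility the paper simply appeals to the argument of \cite[Proposition 9]{MT}, whereas you give a self-contained proof via a rank-drop count and the Smith-normal-form fact over a DVR; your handling of the truncation issue (a relation visible only on the $tb$ rows in range must already be an identity of Laurent series, by the triangular system starting at the bottom row $i=-b$, whose lowest coefficient is $q_{-b}$ alone) is exactly the delicate point, and it is correct, as is the full-rank claim for the Hankel matrix $\big(\binom{|k|}{j'+m'}\big)$. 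One remark: you only ever need the \emph{lower} bound on the rank drop, i.e.\ that the relation space has dimension at least $b-|k|$; the upper bound on the multiplicity comes for free from the degree count, so the concern in your last paragraph about excluding a larger drop is moot. The genuine divergence is the normalization. You compare lexicographically leading coefficients, which requires the Hankel evaluation of $\det\big(1/(c+p+q)!\big)$ (including windows with $c\le 0$, where the convention $1/N!=0$ for $N<0$ is needed), a telescoping factorial identity, and sign bookkeeping that you only sketch (it does come out to $+1$). The paper instead specializes $(x_0,\ldots,x_{t-1})=((t-1)b,(t-2)b,\ldots,0)$: then $\binom{x_s}{i+m}=0$ for $i+m>x_s$ and $\binom{x_s}{x_s}=1$, so the binomial matrix becomes lower unitriangular, $\det A=1$ on the nose, and $c=\prod_{i<j}\prod_k\big(b(j-i)+k\big)^{-(b-\vert k\vert)}$ falls out in one line, sign included. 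Both routes are valid; the specialization is what the denominators $b(j-i)+k$ in the statement are designed for, and adopting it would replace the entire third paragraph of your argument.
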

\begin{proof}
Since 
\begin{align*}
&(\walpha_{i}^{1}(x_{\ws}),\ldots,\alpha_{i}^{\wa }(x_{\ws}))\\
&=
(\binom{x_{\ws}}{i+1},\binom{x_{\ws}}{i+2},\ldots,\binom{x_{\ws}}{i+\wa })
\begin{pmatrix}
1&\binom{-x_{\ws}}{1}&\binom{-x_{\ws}}{2}&\cdots&\binom{-x_{\ws}}{\wa }\\
0&1&\binom{-x_{\ws}}{1}&\ddots&\vdots\\
\vdots&\ddots&\ddots&\ddots&\binom{-x_{\ws}}{2}\\
\vdots&&\ddots&\ddots&\binom{-x_{\ws}}{1}\\
0&\cdots&\cdots&0&1
\end{pmatrix},
\end{align*}
the determinant of $A$ is equal to that of $\wB=(\wB_0\cdots \wB_{t-1})$, where 
\begin{align*}
\wB_{\ws}&=
\begin{pmatrix}
\binom{x_{\ws}}{(t-1)\wa }&\binom{x_{\ws}}{(t-1)\wa +1}&\cdots&\binom{x_{\ws}}{t\wa -1}\\
\binom{x_{\ws}}{(t-1)\wa -1}&\binom{x_{\ws}}{(t-1)\wa }&\cdots&\binom{x_{\ws}}{t\wa -2}\\
\vdots&\vdots&&\vdots\\
\binom{x_{\ws}}{-\wa +1}&\binom{x_{\ws}}{-\wa +2}&\cdots&\binom{x_{\ws}}{0}
\end{pmatrix},\ s=0,\ldots,t-1.
\end{align*}
The same argument as in the proof of \cite[Proposition 9]{MT}
shows that $(x_i-x_j+k)^{\wa -|k|}$ is a factor of $\det B$
for each $0\leq i<j\leq t-1$ and $-\wa +1\leq k\leq \wa -1$.
Thus, there is $c\in \C[x_0,\ldots,x_{t-1}]$ such that
\begin{align*}
\det B&=c\prod_{0\leq i<j\leq t-1}\prod_{k=-\wa +1}^{\wa -1}(x_i-x_j+k)^{\wa -|k|}.
\end{align*}
Since
$\walpha_{i}^{k}(x_s)=\delta_{i+k,0}$ for $i<0$, we have $A_s=\binom{A_s^{\prime}}{E_b}, s=0,\ldots,T-1$,
where
\begin{align*}
\wA_{\ws}^{\prime}&=\begin{pmatrix}
\walpha_{(t-1)\wa -1}^{1}(x_{\ws})&\walpha_{(t-1)\wa -1}^{2}(x_{\ws})&\cdots&\walpha_{(t-1)\wa -1}^{\wa }(x_{\ws})\\
\walpha_{(t-1)\wa -2}^{1}(x_{\ws})&\walpha_{(t-1)\wa -2}^{2}(x_{\ws})&\cdots&\walpha_{(t-1)\wa -2}^{\wa }(x_{\ws})\\
\vdots&\vdots&&\vdots\\
\walpha_{0}^{1}(x_{\ws})&\walpha_{0}^{2}(x_{\ws})&\cdots&\walpha_{0}^{\wa}(x_{\ws})
\end{pmatrix}.
\end{align*}
It follows by
\begin{align}
\label{eq:deform-matrix}
\begin{pmatrix}
E_{(t-1)b}&-A_0^{\prime}\\
O&E_{b}
\end{pmatrix}A&=
\begin{pmatrix}
O&A_1^{\prime}-A_0^{\prime}&\cdots& A_{t-1}^{\prime}-A_0^{\prime}\\
E_{b}&E_{b}&\cdots& E_b
\end{pmatrix}
\end{align}
that $\det A=(-1)^{(t-1)b^2}\det \begin{pmatrix}
A_1^{\prime}-A_0^{\prime}&\cdots& A_{t-1}^{\prime}-A_0^{\prime}\\
\end{pmatrix}$.
Thus,
the degree of $\det A\in \C[x_0,\ldots,x_{t-1}]$ is at most $\binom{t}{2}\wa ^2$ by 
\eqref{eq:deg-al}.
Since the degree of $\prod_{0\leq i<j\leq t-1}\prod_{k=-\wa +1}^{\wa -1}(x_i-x_j+k)^{\wa -|k|}$
is equal to $\binom{t}{2}\wa ^2$,
we have $c\in \C$.

Substituting $((t-1)\wa ,(t-2)\wa ,\ldots,0)$ for 
$(x_0,x_1,\ldots,x_{t-1})$, we obtain
\begin{align*}
1&=c\prod_{0\leq i<j\leq t-1}\prod_{k=-\wa +1}^{\wa -1}(\wa (j-i)+k)^{\wa -|k|}.
\end{align*}
The proof is complete.
\end{proof}

\subsection{\label{subsection:improvement}
Some improvements of results on $A_{G,n}(V)$}

The purpose of this subsection is to improve Theorems 1 and 2 in \cite{MT}. 
Let $V=\oplus_{j=\lw}^{\infty}V_j$ be a vertex operator algebra
and
$G$ an automorphism group of $V$ of finite order $t$.
For $g\in G$ and $n\in (1/t)\N$, $O_{g,n}(V)$ is the subspace of $V$
defined in \cite{DLM3}.

In \cite{MT}, under the condition that $\lw=0$, we constructed an associative algebra
$A_{G,n}(V)$ for each $n\in(1/t)\Z$ in Theorem 1 and
got a duality theorem of Schur-Weyl type in Theorem 2 by
using $A_{G,n}(V)$.
The condition that $\lw=0$ was used in order to show 
the non-singularity of a matrix in \cite[Lemma 3]{MT}.

We shall show \cite[Theorems 1 and 2]{MT} without assuming $\lw=0$. 
To do this, it is sufficient to show the following lemma,
which is an improvement of \cite[Lemma 3]{MT}, by
using $\hunitmu{t}{s}{n}{m}(a,b,i)$ defined in \eqref{eq:def-unit-ab}.
We note that the existence of $\hunitmu{t}{s}{n}{m}(a,b,i)$
follows from Lemma \ref{lemma:iso-oplus} and Proposition \ref{proposition:det-a}.
We use the notation in Remark \ref{remark:twisted-vt-Y}.

\begin{lemma}\label{lemma:app-twisted}
For $a,b\in V=\oplus_{j=\lw}^{\infty}V_j$, $0\leq r\leq t-1$, $p\in\Z$,
$n\in(1/t)\N$ and $g\in G$,
we have
\begin{align*}
\hunitmu{t}{r}{n}{n}(a,b,p)\equiv a^{(g,r)}_{p}b\pmod{O_{g,n}(V)}
\end{align*}
\end{lemma}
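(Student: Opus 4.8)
The plan is to reduce to homogeneous $a,b$ and to a single $g$-eigencomponent of $a$, and then to recognize the generators of $\zhuOit{t}{s}{n}{n}$, after the substitution $\wz^{j}\mapsto c_{j}b$, as relations belonging to $O_{g,n}(V)$. This last recognition is exactly the index matching already carried out in the Remark at the end of Section \ref{section:zhu} (there stated for $O'_{g,n,m}(V)$ of \cite{DJ2}), specialized to $m=n$ (so that $O_{g,n,n}(V)=O_{g,n}(V)$).

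First I would use \eqref{eq:def-unit-ab} and the bilinearity of $\hunitmu{t}{r}{n}{n}(-,-,p)$ to assume $a,b$ homogeneous. Since an automorphism of a vertex operator algebra commutes with $L(0)$, $g$ preserves each $V_{j}$, so each eigencomponent $a^{(g,s)}$ is homogeneous of weight $\wt a$ and $a=\sum_{s=0}^{t-1}a^{(g,s)}$. By linearity in the first slot it then suffices to prove, for each $s$,
\[
\hunitmu{t}{r}{n}{n}(a^{(g,s)},b,p)\equiv \delta_{r,s}\,a^{(g,s)}_{p}b \pmod{O_{g,n}(V)},
\]
and summing over $s$ yields the lemma.

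Fix $s$ and write $c=a^{(g,s)}$. By \eqref{eq:unit-qs} we have $\unitmu{t}{r}{n}{n}(\wt c,\wt b,p;\wz)-\delta_{r,s}\wz^{p}\in \zhuOit{t}{s}{n}{n}(\wt c,\wt b;\wz)$; applying $\wz^{j}\mapsto c_{j}b$ and using \eqref{eq:def-unit-ab} turns the left-hand side into $\hunitmu{t}{r}{n}{n}(c,b,p)-\delta_{r,s}c_{p}b$. Thus it is enough to show that $P(\wz)\mapsto P(\wz)|_{\wz^{j}=c_{j}b}$ carries $\zhuOit{t}{s}{n}{n}(\wt c,\wt b;\wz)$ into $O_{g,n}(V)$ whenever $c\in V^{(g,s)}$. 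By the generating set described after \eqref{eq:def-o-poly}, the images of these generators are the residues $\Res_{\wx}\big((1+\wx)^{Q}\wx^{q+j}Y(c,\wx)b\big)$ for $j\leq 0$ (the generators $\wz^{i}$ with $i\geq \wt c+\wt b-\lw$ map to $c_{i}b=0$), where, using $l_{1}=l_{3}$ and $i_{1}=i_{3}$ since $m=n$, one has $Q=\wt c-1+l_{3}+\delta(s\leq i_{3})+s/t$ and $q=-2l_{3}-\delta(s\leq i_{3})-\delta(t\leq s+i_{3})-1$; here the truncation $i\leq \wt c+\wt b-1-\lw$ in \eqref{eq:def-o-poly} is automatic because $c_{i}b=0$ for $i>\wt c+\wt b-1-\lw$.

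Since $c\in V^{(g,s)}$, each such residue lies in $O_{g,n}(V)$ by its definition in \cite{DLM3}, which completes the inclusion and hence the lemma. The main obstacle is precisely this last step: one must verify that the exponents $Q$ and $q$, with their eigenvalue-dependent corrections $\delta(s\leq i_{3})$, $\delta(t\leq s+i_{3})$ and $s/t$, agree term-by-term with the exponents appearing in the defining relations of $O_{g,n}(V)$ in \cite{DLM3}. This is the same bookkeeping as in the Remark of Section \ref{section:zhu}, and no new idea beyond that matching is needed.
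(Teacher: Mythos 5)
Your proposal is correct and takes essentially the same route as the paper's own proof: reduce to homogeneous elements, split $a$ into its $g$-eigencomponents, apply the defining congruence of $\unitmu{t}{r}{n}{n}(\wt a,\wt b,p;\wz)$, and recognize the generators of the modulus subspace, after the substitution $\wz^{j}\mapsto a^{(g,s)}_{j}b$, as elements of $O_{g,n}(V)$ from \cite{DLM3}. The only (harmless) difference is that you invoke \eqref{eq:unit-qs}, i.e.\ the congruence modulo $\zhuOit{t}{s}{n}{n}(\wt a,\wt b;\wz)$, whereas the paper uses the slightly sharper congruence \eqref{eq:unit-s} modulo $O(\wt a+\wt b-1-\lw,\wt a-1+l+\delta(s\leq i)+s/t,-2l-3;\wz)$; under the substitution both subspaces land in $O_{g,n}(V)$ by the same exponent matching.
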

\begin{proof}
We may assume $a,b$ to be homogeneous.
We write $n=l+i/t$ with $l,i\in \N$ and $0\leq i\leq t-1$.
We  use the notation in Section \ref{section:subspace}.
It follows from \eqref{eq:res-x-q} that the image of the subspace 
$O(\wt a+\wt b-1-\lw,\wt a-1+l+\delta(s\leq i)+s/t,-2l-3;z)$ of $\C[z,z^{-1}]$ under the map $\C[z,z^{-1}]\ni
f\mapsto f|_{z^j=a^{(g,s)}_jb}\in V$
is contained in $O_{g,n}(V)$ for $s=0,\ldots,t-1$.
By \eqref{eq:unit-s}, we have
\begin{align*}
&\hunitmu{t}{r}{n}{n}(a,b,p)=\unitmu{t}{r}{n}{n}(\wt a,\wt b,p;z)|_{z^j=a_jb}\\
&=\sum_{s\neq r}\unitmu{t}{r}{n}{n}(\wt a,\wt b,p;z)|_{z^j=a^{(g,s)}_jb}\\
&\quad{}+\unitmu{t}{r}{n}{n}(\wt a,\wt b,p;z)|_{z^j=a^{(g,r)}_jb}\\
&\equiv a^{(g,r)}_{p}b\pmod{O_{g,n}(V)}.
\end{align*}
\end{proof}

\section{List of Notations}
\begin{longtable}{lp{10cm}}
$\delta(i\leq j)$ &
$
=
\left\{
\begin{array}{ll}
1& \mbox{if $i\leq j$},\\
0& \mbox{if $i>j$}.
\end{array}\right.$\\
 $Y^{s}_{M}(a,\wx)$ &
$=\sum_{\begin{subarray}{c}i\in s/T+\Z\end{subarray}}
a_{i}\wx^{-i-1}$ where $Y_{M}(a,\wx)=\sum_{\begin{subarray}{c}i\in (1/T)\Z\end{subarray}}
a_{i}\wx^{-i-1}$.\\
$O(N,Q,q;\wz)$ & 
the subspace of $\C[\wz,\wz^{-1}]$
spanned by\\
&$\Res_{\wx}\big((1+\wx)^{Q}\wx^{q+j}\sum_{i\in\Z_{\leq N}}\wz^i\wx^{-i-1}\big)$,
$j=0,-1,\ldots$
and $\wz^i, i\in\Z_{\geq N+1}$ where $N,q\in\Z$ and $Q\in\Q$.\\
$\varphi_{N,\gamma}$&
$\varphi_{N,\gamma}(\wz^i)=
\left\{
\begin{array}{ll}
(-1)^{i+1}
\Res_{\wx}\big((1+\wx)^{\gamma-i}\wx^i\sum_{j\in\Z_{\leq N}}\wz^j\wx^{-j-1}\big)&
\mbox{for $i\leq N$,}\\
\wz^i & \mbox{for $i\geq N+1$.}
\end{array}\right.$ for $z^i\in\C[z,z^{-1}]$.\\
$\rbk{i}{j}$ & the integer uniquely determined by 
the conditions that
$0\leq \rbk{i}{j}\leq T-1\mbox{ and }i-j\equiv \rbk{i}{j}/T\pmod{\Z}$
where $i,j\in(1/T)\Z$ and $T\in\Z_{>0}$.
\\
$\ws^{\vee}$ & the integer uniquely determined by
the conditions that
$
0\leq \ws^{\vee}\leq T-1\mbox{ and }
i_1-i_3\equiv \ws+\ws^{\vee}\pmod T$ where $T\in\Z_{>0}$ and $i_1,i_3,s\in\Z$ with 
$0\leq i_1,i_3,s\leq T$.\\
$\zhuOit{T}{\ws}{n}{m}(\alpha,\beta ; \wz)$&
$=O(\alpha+\beta-1-\lw,\alpha-1+l_1+\delta(\ws\leq i_1)+\dfrac{\ws}{T},$\\
&\qquad\quad $-l_1-l_3-\delta(\ws\leq i_1)-\delta(T\leq \ws+i_3)-1;\wz)$.\\
$\zhuOi{T}{n}{m}(\alpha,\beta ; \wz)$&
$=\bigcap_{s=0}^{T-1}
\zhuOit{T}{\ws}{n}{m}(\alpha,\beta ; \wz)$.\\
$\fo{T}{\ws}{\nthr}{\none}(\alpha,\beta,j;\wz)$&
$=
\Res_{\wx}\big((1+\wx)^{\alpha-1+l_1+\delta(s\leq i_1)+s/T}
\wx^{-l_1-l_3-\delta(s\leq i_1)-\delta(T\leq s+i_3)-1+j}$\\
&$\quad{}\times
\sum\limits_{\begin{subarray}{c}i\in\Z\\i\leq\alpha+\beta-1-\lw\end{subarray}}\wz^{i}\wx^{-i-1}\big)$
\qquad (cf. \eqref{eq:def-o-poly}).\\
$\unitmu{T}{\wr}{\nthr}{\none}(\alpha,\beta,i;\wz)$ & 
the Laurent polynomial in \\
&$\C[z,z^{-1}]_{\alpha+\beta-\lw-T(\alpha+\beta-\lw+l_1+l_3+2),\alpha+\beta-1-\lw}$
uniquely determined by the condition \eqref{eq:unit-s}.\\
$\pmul{T}{\nthr}{\ntwo}{\none}(\alpha,\beta;\wz)$ &
$=\sum_{i=0}^{l_2}
\binom{-l_1-l_3+l_2-\delta(\rbk{p}{n}\leq i_1)-\delta(T\leq \rbk{p}{n}+i_3)}{i}$\\
&$\quad{}\times
\Res_{\wx}\big((1+\wx)^{\alpha-1+l_1+\delta({\rbk{p}{n}}\leq i_1)+\rbk{p}{n}/T}$\\
&$\quad{}\times
\wx^{-l_1-l_3+l_2-\delta(\rbk{p}{n}\leq i_1)-\delta(T\leq \rbk{p}{n}+i_3)-i}
\sum_{j\in\Z}
\unitmu{T}{\rbk{p}{n}}{\nthr}{\none}(\alpha,\beta,j;\wz)\wx^{-j-1}
\big)$
\quad (cf. \eqref{eq:def-mul-x}).
\\
$\hunitmu{T}{\ws}{\nthr}{\none}(a,b,i)$&
$=\unitmu{T}{\ws}{\nthr}{\none}(\wt a,\wt b,i;\wz)|_{\wz^j=a_jb}\in V$.\\
$a*^{T}_{\nthr,\ntwo,\none}b$ & 
$=\pmul{T}{\nthr}{\ntwo}{\none}(\wt a,\wt b;\wz)|_{\wz^j=a_jb}\in V$.\\
$\zhuOzero{T}{n}{m}(V)$
&
the subspace of $V$ spanned by\\
&$\{a_{-2}{\bf 1}+(\wt a+m-n)a\in V\ |\ \mbox{homogeneous }a\in V\}$.\\
$\zhuOi{T}{n}{m}(V)$ &
the subspace of $V$ spanned by\\
&$\Big\{P(\wz)|_{\wz^j=a_jb}\in V\ \Big|\ 
\begin{array}{l}
\mbox{homogeneous $a,b\in V$ and}\\
P(\wz)\in \zhuOi{T}{n}{m}(\wt a,\wt b ; \wz)
\end{array}\Big\}$.\\
$\zhuOii{T}{\nthr}{\none}(V)$ &
the subspace of $V$ spanned by\\
&$u*^{T}_{n,p_3,m}((a*^{T}_{p_3,p_2,p_1}b)*^{T}_{p_3,p_1,m}c-a*^{T}_{p_3,p_2,m}(b*^{T}_{p_2,p_1,m}c))$
for all $a,b,c,u\in V$ and all $p_1,p_2,p_3\in (1/T)\N$.\\
$\zhuOiii{T}{n}{m}(V)$ & 
$=
\sum_{p_1,p_2\in(1/T)\N}(V*^{T}_{n,p_2,p_1}(\zhuOzero{T}{p_2}{p_1}(V)+
\zhuOi{T}{p_2}{p_1}(V))*^{T}_{n,p_1,m}V$.\\
$\zhuO{T}{n}{m}(V)$&
$=
\zhuOzero{T}{n}{m}(V)+
\zhuOi{T}{n}{m}(V)+
\zhuOii{T}{n}{m}(V)+
\zhuOiii{T}{n}{m}(V)$.\\
$Z_{M,n,m}^{(s)}(a,b;\wz^i)$&$=\cty{s}{M}{a,b}{\wt a+\wt b+m-n-2-i,i}$.
\end{longtable}


\begin{thebibliography}{99}
\bibitem{B}
R. Borcherds, 
Vertex algebras, Kac-Moody algebras, and the Monster, 
{\em Proc. Nat. Acad. Sci. U.S.A.} {\bfseries 83} (1986), 3068--3071.
\bibitem{DVVV}
R. Dijkgraaf, C. Vafa, E. Verlinde, and H. Verlinde, 
The operator algebra of orbifold models, 
{\em Comm. Math. Phys.} {\bfseries 123} (1989), 485--526.
\bibitem{DHVW}
L. Dixon, J.A. Harvey, C. Vafa and E. Witten, 
String on orbifolds, Nucl. Phys. B {\bfseries 261} (1985)
620--678;
String on orbifolds II, Nucl. Phys. B {\bfseries 274} (1986) 285--314.
\bibitem{DJ1}
C. Dong and C. Jiang, 
Bimodules associated to vertex operator algebras,
\emph{Math. Z.} {\bfseries 259} (2008), 799--826. 
\bibitem{DJ2}
C. Dong and C. Jiang, 
Bimodules and $g$-rationality of vertex operator algebras,
\emph{Trans. Amer. Math. Soc.} {\bfseries 360} (2008), 4235--4262.
\bibitem{DLM1}
C. Dong, H.S. Li and  G. Mason, Twisted representations of vertex
operator algebras, \emph{Math. Ann.} {\bfseries 310} (1998),
571--600.
\bibitem{DLM2}
C. Dong, H.S. Li and  G. Mason, 
Vertex operator algebras and associative algebras,
\emph{J. Algebra} {\bfseries 206} (1998), 67--96.
\bibitem{DLM3}
C. Dong, H.S. Li and  G. Mason, 
Twisted representations of vertex operator algebras and associative algebras,
\emph{Internat. Math. Res. Notices} (1998), no. 8, 389--397.
\bibitem{DM}
C. Dong and G. Mason, 
On quantum Galois theory.
\emph{Duke Math. J.} {\bfseries 86} (1997), 305--321.
\bibitem{FB}
E. Frenkel and D. Ben-Zvi, 
\emph{Vertex algebras and algebraic curves}, 2nd edn. 
Mathematical Surveys and Monographs, Vol. {\bfseries 88}, American Mathematical Society, Providence, 2004. 
\bibitem{FLM}
I. B. Frenkel, J. Lepowsky and A. Meurman, 
\emph{Vertex Operator Algebras and the Monster}, Pure and Applied Math., Vol. {\bfseries 134}, Academic Press, 1988.
\bibitem{HMT}
A. Hanaki, M. Miyamoto and D. Tambara, 
Quantum Galois theory for finite groups.
\emph{Duke Math. J.} {\bfseries 97} (1999), 541--544.
\bibitem{Le}
J. Lepowsky, 
Calculus of twisted vertex operators,
Proc. Nat. Acad. Sci. U.S.A. {\bfseries 82} (1985), 8295--8299.
\bibitem{LL}
J. Lepowsky and H. S. Li, \emph{Introduction to Vertex Operator
Algebras and their Representations}, Progress in Mathematics,
\textbf{227}, Birkhauser Boston, Inc., Boston, MA, 2004.
\bibitem{Li}
H. S. Li,
Local systems of twisted vertex operators, vertex operator superalgebras and twisted modules,
\emph{Contemp. Math.} {\bfseries 193} (1996), 203--236.
\bibitem{MT}
M. Miyamoto and K. Tanabe,
Uniform product of $A_{g,n}(V)$ for an orbifold model $V$ and $G$-twisted Zhu algebra.
{\em J. Algebra} {\bfseries 274} (2004), 80--96. 
\bibitem{Z}
Y. Zhu,
Modular invariance of characters of vertex operator algebras,
{\em J. Amer. Math. Soc.} {\bf9} (1996), 237--302.
\end{thebibliography}
\end{document}